\documentclass[reqno]{amsart}


\usepackage{amsfonts,mathrsfs} 
\usepackage{amsmath}
\usepackage{amssymb}
\usepackage{amsthm}
\usepackage{graphics}
\usepackage{tikz}
\usetikzlibrary{arrows}
\usepackage{tikz-cd}
\usepackage{subcaption}

\newcommand{ \N }{\mathbb{N}}
\newcommand{ \Z }{\mathbb{Z}}
\newcommand{ \Q }{\mathbb{Q}}
\newcommand{ \R }{\mathbb{R}}
\newcommand{ \C }{\mathbb{C}}

\newcommand{ \Chat }{\widehat{\mathbb{C}}}
\newcommand{ \D }{\mathbb{D}}

\newcommand{ \A }{\mathcal{A}}
\newcommand{ \E }{\mathcal{E}}
\newcommand{ \Esc }{\mathcal{E}}

\newcommand{ \Sone }{\mathcal{S}_{1}}
\newcommand{ \Stwo }{\mathcal{S}_{2}}
\newcommand{ \Sn }{\mathcal{S}_{p}}
\newcommand{ \phyp }{\widehat{\mathcal{H}}_{0}}

\newcommand{ \bott }{\mathfrak{B}}
\newcommand{ \Tess }{\mathbf{Tes}}
\captionsetup[subfigure]{labelfont=rm}

\newtheorem*{mthm}{Main Theorem}
\newtheorem{theorem}{Theorem}[section]
\newtheorem{lemma}[theorem]{Lemma}
\newtheorem{cor}[theorem]{Corollary}
\theoremstyle{definition}
\newtheorem{defn}[theorem]{Definition}
\theoremstyle{remark}
\newtheorem*{Rem}{Remark}
\newtheorem*{Conj}{Conjecture}

\title[Relations between escape regions]{Relations between Escape Regions in the Parameter Space of Cubic Polynomials}

\author{Araceli Bonifant}
\address{Department of Mathematics and Applied Mathematical Sciences, University of Rhode Island, Kingston, Rhode Island}
\email{bonifant@uri.edu}
\author{Chad Estabrooks}
\address{Department of Mathematics and Physics, Belmont Abbey College, Belmont, North Carolina}
\email{chadestabrooks@bac.edu}
\author{Thomas Sharland}
\address{Department of Mathematics and Applied Mathematical Sciences, University of Rhode Island, Kingston, Rhode Island}
\email{tsharland@uri.edu}
\subjclass[2010]{Primary 37F10}
\date{\today}


\begin{document}

\begin{abstract}
 We describe a topological relationship between slices of the parameter space of cubic maps. In the paper \cite{CP1}, Milnor defined the curves $\Sn$ as the set of all cubic polynomials with a marked critical point of period~$p$. In this paper, we will describe a relationship between the boundaries of the connectedness loci in the curves $\Sone$ and $\Stwo$. 
 
\end{abstract}

\maketitle

\section{Introduction}

The study of parameter spaces of polynomials is of fundamental importance in complex dynamics. The Orsay notes \cite{Orsay} contained, amongst other things, a detailed study of the space of quadratic polynomials. Since then, mathematicians have also turned their attention to other spaces of polynomials. In particular, researchers including Branner, Hubbard, Milnor, Roesch and Tan (among many others) have endeavored to understand the space of cubic polynomials. 

This present paper is a modest step in that direction. It reports the main results of the thesis of the second author \cite{Thesis}, and looks at the boundary of the connectedness locus $\mathcal{C}(\mathcal{P}(3))$ of cubic polynomials. A main tool for this analysis are the tessellations considered by Milnor and the first author in \cite{CP3}, which themselves are partially inspired by the puzzles (originally called patterns) discussed in \cite{ICP2} by Branner and Hubbard. In this paper, they remarked that knowledge of the escape regions may ``provide tools for creeping up to the cubic connectedness locus''. The main result of this work is to prove that the conformal isomorphism between escape regions in the parameter space of cubic polynomials can be extended (in a special case) to their boundaries.


\section{Preliminaries}

We quickly state some standard definitions from Complex Dynamics. For a more detailed treatment, the reader is encouraged to refer to \cite{Milnor}. Recall that a rational map $F$ on the Riemann sphere divides the sphere into two disjoint sets. The \emph{Julia set} $J(F)$, the closure of the set of periodic repelling points, is where the dynamics is ``chaotic'' and its complement, the \emph{Fatou set} is where the dynamics is tame. A connected component of the Fatou set is called a Fatou component.

\subsection{Notation} We quickly present the notation we will use in the paper. The external (respectively, internal) ray of angle $t$ for a map $F$ will be denoted by $r_e^F(t)$ (respectively, $r_i^F(t)$); its landing point is denoted by $\lambda_e^F(t)$ (respectively $\lambda_i^F(t)$). We will sometimes drop the superscript if it is clear which map is being referred to. In the parameter space, the external (respectively, internal) ray of angle $t$ is denoted by $R_e(t)$ (respectively, $R_i(t)$) and its landing point is $\Lambda_e(t)$ (respectively, $\Lambda_i(t)$). Thus we use lower case for the dynamical objects and upper case for the parameter objects. If $f_a \in \Sone$, the immediate basin of the fixed critical point is denoted by $\widehat{\A}_a$.

\subsection{Cubic Polynomials}

Following Milnor \cite{CP1}, we may parameterize the space of cubic polynomials by the pairs $(a,v) \in \C^2$ such that
\[
 f = f_{a,v}(z) = z^3 - 3a^2z + 2a^3 + v.
\]
Here the polynomial $f_{a,v}$ has critical points at $a$ and $-a$. Milnor observed that it is helpful to divide this parameter space up into ``slices''. Of particular interest are the slices $\mathcal{S}_p$, polynomials for which the (marked) critical point $a$ has exact period $p$ under iteration, while $-a$ is the ``free'' critical point. The point $2a$ has the same image under $f_{a,v}$ as the critical point $-a$; we call $2a$ the \emph{cocritical} point.

There are four types of hyperbolic components in the connectedness locus. A hyperbolic component is called
\begin{itemize}
 \item \emph{Type A} (Adjacent) if both critical points belong to the same periodic Fatou component.
 \item \emph{Type B} (Bitransitive) if the two critical points belong to different Fatou components in the same periodic cycle.
 \item \emph{Type C} (Capture) if one critical point belongs to a periodic cycle of Fatou components, and the second critical point eventually lands in this cycle.
 \item \emph{Type D} (Disjoint) if the polynomial has two distinct attracting periodic orbits, both of which attracts one critical point.
\end{itemize}

It is easy to see that one may parameterize the space $\Sone$ by a single complex parameter $a$, since then $f_a = f_{a,a}$ has a fixed critical point at $a$. There is a unique type $A$ component in $\Sone$ (see Figure~\ref{fig: S1S2}), which is called the \emph{principal hyperbolic component} and denoted by $\phyp$.
\subsection{Escape Regions}

Following \cite{CP2}, we define an escape region to be a connected component of $\Sn \setminus \mathcal{C}(\Sn)$, where $\mathcal{C}(\Sn) = \Sn \cap \mathcal{C}(\mathcal{P}(3))$. We will be interested in two particular escape regions: the unique escape region $\E_1$ in $\Sone$ and the basilica escape region $\E_2^B$ in $\Stwo$ (Figure~\ref{fig: S1S2}). For a polynomial $f \in \E_1$, all components of the filled Julia set which are not points are quasidisks. Similarly, in $\E_2^B$, all non-point components of the filled Julia set of a polynomial are quasiconformal copies of the Julia set for the ``basilica'' map $z \mapsto z^2-1$ (see Section~\ref{sec:bas}).

\begin{figure}[!ht]
    \centering
    \includegraphics[width=0.9\textwidth]{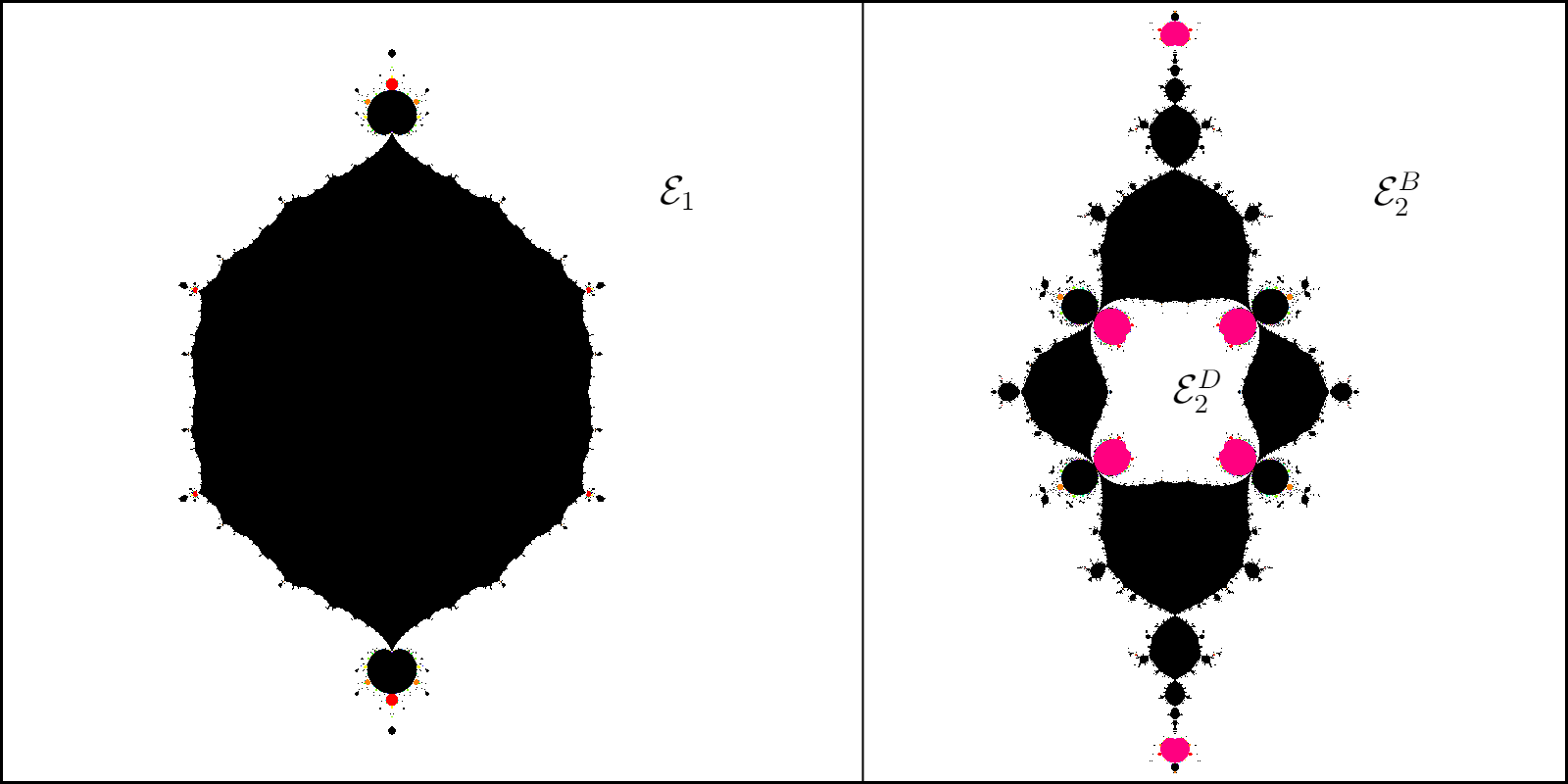}
    \caption{Connectedness loci (shaded) and escape regions (white) in the curves $\mathcal{S}_{1}$ and $\mathcal{S}_{2}$ (left and right, respectively) shown side by side. The principal hyperbolic component $\phyp$ in $\Sone$ is the black lemon-shaped region in the middle of the image.}
    \label{fig: S1S2}
\end{figure}

Let $f = f_{a,v} \in \Sn$. From B{\"o}ttcher's theorem (see e.g \cite[Theorem 9.1]{Milnor}), there is a maximal neighborhood $V$ of $\infty$ and a corresponding minimal $r \geq 1$ such that the map 
\[\bott_f: \Chat \setminus \overline{V} \to \Chat \setminus \overline{\D_{r}},\]
 conjugates $f$ to the map $z \mapsto z^3$. Adding the requirement that $\bott_f$ is tangent to the identity near $\infty$, there is a unique such map. Given an escape region $\E$ of multiplicity one\footnote{All escape regions considered in this paper are of multiplicity $1$. For more details, refer to \cite{CP2}.}, we may then define the conformal isomorphism
 \[
  \Phi_\E \colon \E \to \C \setminus \overline{\D}
 \]
by $\Phi_\E (f) = \bott_f(2a_f)$. Observe that these maps $\Phi_\E$ provide a ``coordinate system'' on $\E$: we denote by $(\rho,t)_\E$ the polynomial $f$ in $\E$ for which $\Phi_\E(f) = \rho e^{2 \pi i t}$. 

\subsection{Tessellations of $\mathcal{S}_p$}

For some $q \in \N$, let $A_q \subset \Q/\Z$ be the set of all arguments periodic of period $q$ under tripling. Such arguments have the form 
$$\frac{m}{3^{q}-1}$$
with $q$ minimal. We will say an argument $t$ is coperiodic (of period $q$) if one of $t + \frac13$ or $t - \frac13$ belongs to $A_q$.

The following definition comes from \cite{CP3}.

\begin{defn}
  Let $q \in \N$. We construct the period $q$ tessellation $\Tess_q(\overline{\mathcal{S}}_p)$ of $\overline{\mathcal{S}}_p$ as follows. The collection of all coperiod $q$ parameter rays in $\mathcal{S}_p$ decomposes $\mathcal{S}_p$ into a finite number of open sets $\mathcal{F}_k$, which we call the faces of the tessellation. The edges of the tessellation are the parameter rays of coperiod $q$, while the vertices split into two types: parabolic vertices are the (parabolic) landing points of the coperiodic rays and the ideal points are the elements of $\overline{\mathcal{S}}_p \setminus \mathcal{S}_p$.
\end{defn}

Of particular importance will be the tessellation $\Tess_2(\overline{\Stwo})$. Given a tessellation $\Tess_q(\overline{\mathcal{S}}_p)$, we may write
\[
  \Tess_q(\overline{\mathcal{S}}_p) = \left( \Tess^{(0)}_q(\overline{\mathcal{S}}_p), \Tess^{(1)}_q(\overline{\mathcal{S}}_p), \Tess^{(2)}_q(\overline{\mathcal{S}}_p) \right)
\]
 where $\Tess^{(0)}_q(\overline{\mathcal{S}}_p)$ is the set of vertices, $\Tess^{(1)}_q(\overline{\mathcal{S}}_p)$ is the set of edges and $\Tess^{(2)}_q(\overline{\mathcal{S}}_p)$ is the set of faces of $\Tess_q(\overline{\mathcal{S}}_p)$. 

We will also need the notion of an orbit portrait from \cite{CP3} (Compare \cite{POER}).

\begin{defn}
 Let $F \in \mathcal{S}_p$ be such that every ray of period $q$ lands at a (necessarily periodic) point of $J(F)$. The orbit portrait is the equivalence relation on the set of angles which have period $q$ under tripling defined by $\phi \sim \psi$ if and only if the dynamical rays $r_e(\phi)$ and $r_e(\psi)$ have a common landing point in $J(F)$.
 \label{def: orb port}
\end{defn}

Note that the orbit portrait is well-defined unless the map $F$ belongs to a parameter ray of coperiod $q$. In particular, it is well-defined for all faces of $\Tess_q(\overline{\mathcal{S}}_p)$ and so is well-defined on the connectedness locus $\mathcal{C}(\mathcal{S}_p)$. In fact we have the following.

%


\begin{theorem}{\cite[Theorem 3.12]{CP3}}
Let $\mathcal{F}_k$ be a face of $\Tess_q(\overline{\mathcal{S}}_p)$. Then for each $F \in \mathcal{F}_k$ and each angle $\theta_0$ which is periodic of period $q$ under tripling, the dynamical ray $r_e^F(\theta_0)$ lands at a repelling periodic point $z(F) \in J(F)$. Furthermore the correspondence $F \mapsto z(F)$ defines a holomorphic function $z \colon \mathcal{F}_k \to \C$. The orbit portrait for dynamic rays of period $q$ is the same for all maps $F \in \mathcal{F}_k$.   
\label{thm: BM}
\end{theorem}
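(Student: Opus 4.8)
The plan is to combine the standard stability theory for landing points of repelling periodic dynamical rays with a connectedness/monodromy argument on the face $\mathcal{F}_k$. First I would recall that for a fixed map $F_0 \in \mathcal{F}_k$, the set $A_q$ of angles of period dividing $q$ under tripling is finite and $F_0$-invariant (tripling permutes it), and each such ray $r_e^{F_0}(\theta_0)$ either lands at a repelling or parabolic periodic point, or runs into a precritical point. The key structural input is that $F$ lies in a face of $\Tess_q(\overline{\mathcal{S}}_p)$, i.e. $F$ is \emph{not} on any coperiod $q$ parameter ray; this is precisely the condition guaranteeing the cocritical point $2a_F$ — equivalently the free critical value — is not on a dynamical ray of period $q$, so no period-$q$ ray crashes into a precritical point and no such ray lands at a parabolic cycle. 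Hence for every $F \in \mathcal{F}_k$ and every $\theta_0 \in A_q$, the ray $r_e^F(\theta_0)$ lands at a repelling periodic point $z_{\theta_0}(F)$.

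Next I would establish holomorphy and constancy of the portrait by a local-to-global argument. Locally: near any $F_1 \in \mathcal{F}_k$, the repelling periodic point $z_{\theta_0}(F_1)$ persists and moves holomorphically (it is a simple root of $F^{\circ q}(z) = z$ with multiplier $\neq 1$, so the implicit function theorem applies), and by the standard stability lemma for rays landing at repelling cycles (see \cite[\S 18]{Milnor} or the analogous statement in \cite{POER}), the landing point of $r_e^F(\theta_0)$ depends continuously — in fact the landing is ``robust'' — so $F \mapsto z_{\theta_0}(F)$ coincides with this holomorphic continuation on a neighborhood of $F_1$. In particular the coincidence pattern ``$r_e^F(\phi)$ and $r_e^F(\psi)$ land together'' is locally constant: if two rays land together at $F_1$ they continue to land at the same (holomorphically varying) repelling point nearby, and if they land apart they stay apart by continuity of the two distinct landing points. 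Since $\mathcal{F}_k$ is connected, a locally constant equivalence relation is globally constant, giving the last sentence of the theorem; and the holomorphic germ $z_{\theta_0}$ defined near each point glues (by uniqueness of holomorphic continuation, no monodromy since we stay within a single connected face where the point never collides with another periodic point or becomes parabolic) to a global holomorphic function $z \colon \mathcal{F}_k \to \C$.

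The main obstacle is the first step: rigorously excluding the two bad scenarios — a period-$q$ ray landing at a \emph{parabolic} cycle, or a period-$q$ ray \emph{crashing} into an iterated preimage of the free critical point — for every $F$ in the interior face, and showing these are exactly the events that occur on the coperiod-$q$ edges. This is where the definition of the tessellation (edges $=$ coperiod $q$ parameter rays, whose landing points are the parabolic vertices) must be used precisely: one needs that the parameter rays of coperiod $q$ are exactly the locus where the orbit of $2a_F$ meets a dynamical ray of period $q$, hence the complementary faces are exactly where all period-$q$ rays land at repelling cycles and the portrait is well-defined. I would cite the construction in \cite{CP3} for this correspondence. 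The remaining ingredients — holomorphic motion of repelling cycles and stability of their ray landings — are standard, as is the connectedness argument, so once the ``good locus $=$ union of faces'' statement is in hand the theorem follows.
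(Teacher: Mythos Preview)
The paper does not prove this statement at all: it is quoted verbatim as \cite[Theorem 3.12]{CP3} and used as a black box throughout, so there is no in-paper argument to compare your proposal against.

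That said, your sketch is the standard route to results of this type and is essentially sound. The local step (implicit function theorem for the repelling cycle plus stability of rays landing at repelling points, as in \cite[\S 18]{Milnor} or \cite{POER}) and the global step (local constancy of the portrait on a connected face) are both correct. You have also correctly isolated the genuine content: showing that the interior of a face is exactly the locus where every period-$q$ dynamical ray lands at a \emph{repelling} cycle. One point to tighten: your sentence ``$F$ is not on any coperiod $q$ parameter ray \dots\ so no period-$q$ ray crashes into a precritical point and no such ray lands at a parabolic cycle'' runs together two different exclusions. The crashing exclusion is immediate from the definition of coperiodic parameter rays, but the parabolic exclusion is not a consequence of the cocritical-point condition; it requires the separate fact that every map in $\mathcal{S}_p$ with a parabolic cycle of period dividing $q$ is the landing point of some coperiod-$q$ parameter ray, hence is a vertex (not an interior point) of $\Tess_q(\overline{\mathcal{S}}_p)$. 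You flag this later as ``the main obstacle'' and defer to \cite{CP3}, which is the honest thing to do, but be aware that the two bad scenarios need distinct arguments rather than a single one.
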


\subsection{Conformal Isomorphisms between escape regions}
We will make use of the conformal isomorphisms
\[\Phi_1 = \Phi_{\mathcal{E}_1}:\E_{1} \to \C \setminus \overline{\D} \qquad\text{and}\qquad
\Phi_{2} = \Phi_{\mathcal{E}_2^B}:\E_{2}^{B} \to \C \setminus \overline{\D},\]
which are defined by mapping parameter rays to radial lines of corresponding argument. The composition 
\[\Psi = \Phi_{2}^{-1} \circ \Phi_{1}: \E_{1} \to \E_{2}^{B}\]
is then a conformal isomorphism between the two escape regions. Writing $(\rho,t)_1 = (\rho,t)_{\E_1}$ and $(\rho,t)_2 = (\rho,t)_{\E_2^B}$, we have in particular that $\Psi((\rho,t)_1) = (\rho,t)_2$. We will show that there exists a continuous extension $\widehat{\Psi}$ of $\Psi$, defined on the boundary points of the type $A$ and $C$ components of $\Sone$ which are the landing points of parameter rays of rational angle. We are interested in the points at which $\widehat{\Psi}$ is not injective.

Our main result is the following.

\begin{mthm}
 The map $\widehat{\Psi}$ is injective at all points $x$ except for the following.
 \begin{itemize}
  \item The point $x$ lies on the boundary of a type $C$ component and has internal argument which is a basilica angle.
  \item The point $x$ lies on the boundary of $\phyp$ and has internal argument which is a basilica angle distinct from $\frac{1}{3}$ or $\frac{2}{3}$.
 \end{itemize}

\end{mthm}

See Section~\ref{sec:bas} for the definition of a basilica angle. We remark that it is not immediate that the map $\widehat{\Psi}$ is well-defined. Indeed, the analogous map from $\E_1$ to $\E_2^D$ (the escape region of $\Stwo$ associated to the disk, see \cite{CP2}) has no continuous extension to the boundary of the type $A$ and $C$ components of $\Sone$. We prove that $\widehat{\Psi}$ is well-defined in Section~\ref{sec:preres} and prove the Main Theorem in Section~\ref{sec:main}. See Figure~\ref{fig:example} for an illustration of the main theorem.

\begin{figure}[h]
    \centering
    \begin{subfigure}[b]{0.465\textwidth}
        \includegraphics[width=\textwidth]{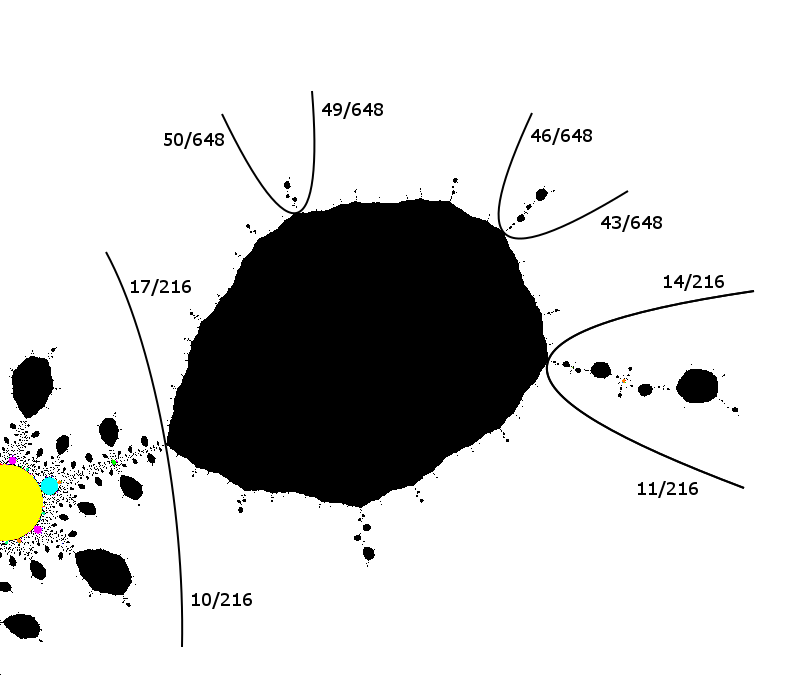}
        \caption{A close-up of part of $\Sone$.}
        \label{fig:magS1}
    \end{subfigure}
    \qquad
    \begin{subfigure}[b]{0.465\textwidth}
        \includegraphics[width=\textwidth]{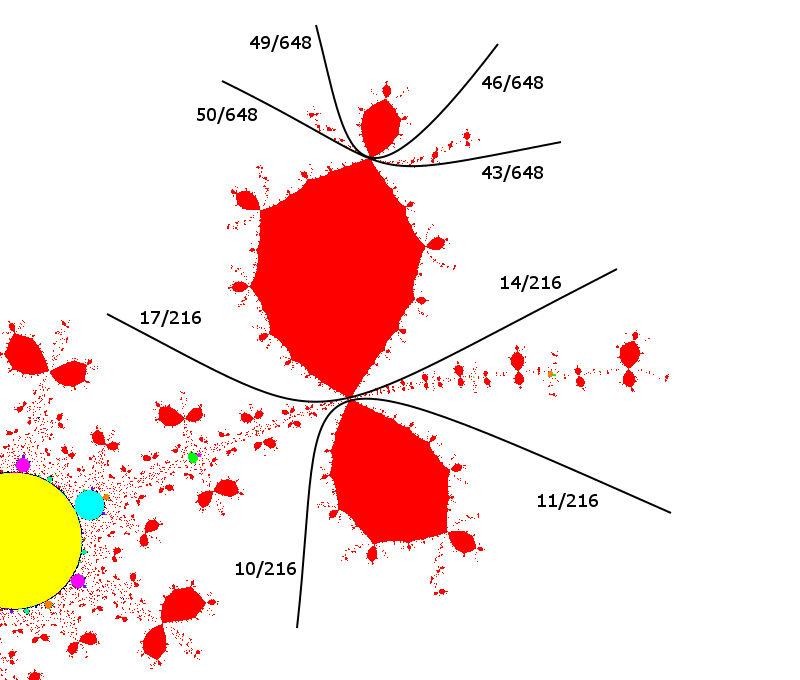}
        \caption{A close-up of part of $\Stwo$.}
        \label{fig:magS2}
    \end{subfigure}
    \caption{An example of the result of the Main Theorem, with some important parameter rays marked. Observe that the type $C$ components in $\Sone$ are collapsed down to (homeomorphic) copies of the basilica in $\Stwo$. }\label{fig:example}
\end{figure}

\subsubsection{Laminations}
We will also make use of the notion of a \emph{lamination}, originally constructed by Thurston \cite{GDR} to study the dynamics of rational maps on the sphere. A lamination is a set of chords (called \emph{leaves} of the lamination) in the closed unit disk satisfying the following. Let $L$ be the collection of leaves in a lamination.
\begin{itemize}
 \item The leaves are pairwise unlinked: that is, two different elements of $L$ are disjoint, except possibly at their endpoints;
 \item the union of $L$ is closed.
\end{itemize}

A \emph{gap} is the closure of the component of the complement of $\bigcup L$ in the closed unit disk. One can use laminations to model the structure of Julia sets of a polynomial $p$: two points $s$ and $t$ are identified in the lamination if and only if $\lambda_e^p(s) = \lambda_e^p(t)$. One may draw a lamination by drawing chords in the disk: the chord $\overline{st}$ is drawn in if and only if $s$ is identified with $t$ in the lamination, see Figure~\ref{fig:basandlam}.

\begin{figure}[h]
    \centering
    \begin{subfigure}[b]{0.4\textwidth}
        \includegraphics[width=\textwidth]{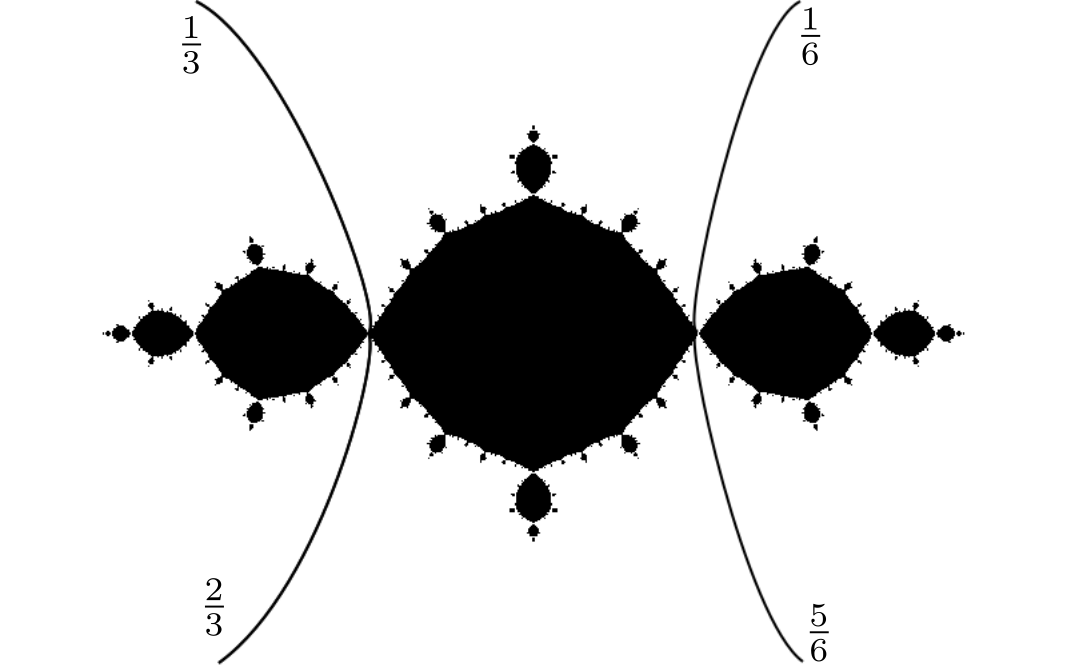}
        \caption{The Julia set of the basilica polynomial $z \mapsto z^2 - 1$.}
        \label{fig:bas}
    \end{subfigure}
    \qquad
    \begin{subfigure}[b]{0.4\textwidth}
        \includegraphics[width=\textwidth]{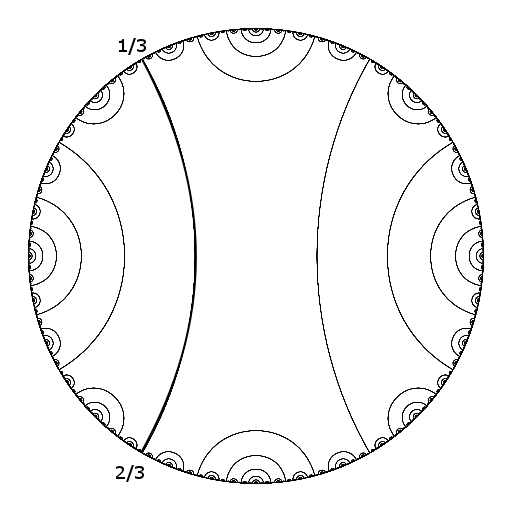}
        \caption{The lamination of the basilica}
        \label{fig:lam}
    \end{subfigure}
    \caption{The Julia set of the basilica and its associated lamination.}\label{fig:basandlam}
\end{figure}

\section{The Basilica polynomial}\label{sec:bas}

In this section we collect together results on the basilica polynomial $z \mapsto z^2 - 1$. These will be important when we start discussing the structure of the Julia sets in $\E_2^B$ in Section~\ref{sec:preres}. We begin with a simple fact about how the rays land on the Julia set of the basilica.

\begin{lemma}
 Let $t \in \R/\Z$ and suppose there exists $n \in \N$ such that $2^{n}t \equiv \frac{1}{3}$. Then there is a unique $\widetilde{t} \in \R/\Z$, $\widetilde{t} \neq t$, such that the external rays of angles $t$ and $\widetilde{t}$ land at the same point of the Julia set of the basilica.
\label{lem: ! bas pairs}
\end{lemma}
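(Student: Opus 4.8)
The plan is to analyze the dynamics of the angle-doubling map on $\R/\Z$ together with the combinatorial structure of the basilica lamination. The key observation is that the landing point structure of rays on the Julia set of $z \mapsto z^2-1$ is well understood: the only rays that are identified are (a) the $\alpha$-fixed point, where $r_e(1/3)$ and $r_e(2/3)$ land together, and (b) the preimages of that identification under the dynamics, which produce the pinch points separating the countably many bounded Fatou components. The hypothesis $2^n t \equiv 1/3$ for some $n \in \N$ says precisely that $t$ eventually maps to the angle $1/3$ of the $\alpha$-fixed point, so $\lambda_e(t)$ is a pinch point of the Julia set; such a point is the landing point of exactly two external rays (the local structure at the iterated preimages of $\alpha$ has exactly two access points), and I want to identify the partner angle $\widetilde t$ explicitly.

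First I would recall (citing standard facts about the basilica, or deriving them from the lamination in Figure~\ref{fig:lam}) that $r_e(1/3)$ and $r_e(2/3)$ land at the $\alpha$-fixed point, and that $1/3$ and $2/3$ are the only period-$2$ angles under doubling, so this is the unique identification among periodic angles. Then I would argue by induction on the minimal $n$ with $2^n t \equiv 1/3$: if $n = 1$, then $2t \equiv 1/3$, so $t \in \{1/6, 1/6 + 1/2\} = \{1/6, 2/3\}$. But $2/3$ has period $2$, contradicting minimality of $n=1$ unless we're in the base case — so I need to be a little careful: the base case should really be ``$t$ has period $2$'', i.e. $t \in \{1/3, 2/3\}$, with partner the other one. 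For the inductive step, suppose $2t$ satisfies the hypothesis with partner $\widetilde{2t}$, so $r_e(2t)$ and $r_e(\widetilde{2t})$ land together at a point $w$. Then $f^{-1}(w)$ where $f(z) = z^2-1$ consists of two points, and the four rays $r_e(s)$ with $2s \in \{2t, \widetilde{2t}\}$ distribute among them; the correct partner $\widetilde t$ is the unique one of the three remaining preimage angles lying in the same component of $f^{-1}(w)$ — and one checks using the unlinking property of the lamination (no leaf of the basilica lamination separates a preimage pair in the ``wrong'' way) that this is forced, giving both existence and uniqueness.

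Concretely, the bookkeeping amounts to this: writing $t$ and $t + 1/2$ for the two doubling-preimages of $2t$, and $\widetilde t$ and $\widetilde t + 1/2$ for those of $\widetilde{2t}$, I must pair $t$ with exactly one of $\widetilde t, \widetilde t + 1/2$. The pairing is determined by which rays are ``adjacent'' at the critical value side versus the other side — equivalently, by whether the leaf $\overline{t \; (t{+}1/2)}$ (the critical leaf, joining the two preimages) separates them, which it does not since a pinch-point preimage never coincides with the critical point of the basilica (the critical point $0$ is in a Fatou component, not the Julia set). So exactly one choice is unlinked with all previously-drawn leaves, establishing uniqueness, and that choice does land together with $t$ by pulling back the identification at $w$, establishing existence.

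The main obstacle I anticipate is making the ``pull back the identification, exactly one preimage pairs correctly'' step rigorous without simply invoking a black-box description of the basilica lamination. One clean way is to use the fact that $f$ restricted to a neighborhood of the Julia set is a degree-two branched cover branched only over the critical value $-1$, and that $\lambda_e(t)$, being a preimage of $\alpha$ under some iterate, is never the critical point; hence $f^n$ is a local homeomorphism near $\lambda_e(t)$ and the cyclic order of the two rays landing at $\alpha$ is transported faithfully, forcing exactly two rays at $\lambda_e(t)$. I would then pin down $\widetilde t$ by the recursion: at each pullback step, of the two candidates $\{\widetilde{2t}/2, \widetilde{2t}/2 + 1/2\}$, the partner of $t$ is the one for which the arc of $\R/\Z$ between it and $t$ not containing the critical value angle $1/2$ maps bijectively under doubling to the corresponding arc between $2t$ and $\widetilde{2t}$ — a condition satisfied by exactly one. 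This gives an algorithm computing $\widetilde t$ and simultaneously proves uniqueness, completing the proof.
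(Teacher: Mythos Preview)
Your argument is correct, and in fact supplies more than the paper does: the paper's own proof of this lemma is a one-line citation of Proposition~4.3 of \cite{CNM}, with no further argument. So your approach is genuinely different in that it is self-contained.

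The cleanest part of your proposal is the local-homeomorphism observation in the final paragraph, and that alone already proves the lemma: since the critical point $0$ of the basilica lies in the Fatou set while $\lambda_e(t)$ and all its forward images up to $\alpha$ lie in the Julia set, the iterate $f^n$ carrying $\lambda_e(t)$ to $\alpha$ is a local conformal isomorphism near $\lambda_e(t)$; hence the number of accesses (equivalently, landing rays) at $\lambda_e(t)$ equals the number at $\alpha$, namely two. This gives existence and uniqueness of $\widetilde t$ simultaneously. The inductive pairing procedure you sketch beforehand is useful if one wants an explicit recursion for $\widetilde t$, but a couple of details there are loosely phrased (for instance, calling $1/2$ the ``critical value angle'' is not quite right, since the critical value $-1$ lies in the interior of the filled Julia set and is not the landing point of any external ray; the diameter $\overline{0\,\frac{1}{2}}$ is not a leaf of the basilica lamination). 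These imprecisions do not affect correctness, since the local-homeomorphism step already suffices, but if you keep the recursive description you should state the unlinking criterion directly in terms of the existing leaves of the lamination rather than via a ``critical value angle''.
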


The proof follows from Proposition 4.3 of \cite{CNM}. This allows us to introduce the notion of a basilica angle.

\begin{defn}
An argument $t \in \R/\Z$ will be called a basilica angle if there is $n \in \N$ such that $2^{n}t \equiv \frac{1}{3} \pmod{1}$. Equivalently, $t$ is a basilica angle if the external  ray of angle $t$ for the basilica lands at a biaccessible point (that is, a point at which more than one external ray lands). If $s \neq t$ and the rays of angles $s$ and $t$ have the same landing point on the basilica, we say $\{s,t\}$ is a basilica pair.
\label{def: bas pair}
\end{defn}

We now want to transport the well-known facts about the basilica to polynomials belonging to the basilica escape region.  

\begin{lemma}
For a map $F = (r,t)_{2}^{B} \in \E_{2}^{B}$, there is a unique fixed point $\alpha$ in $J$ on the boundary of the Fatou components containing the marked critical point $a$, and its associated critical value $v$. 
\label{lem: ! fp alpha}
\end{lemma}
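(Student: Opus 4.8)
The plan is to transport the corresponding fact about the basilica polynomial $z \mapsto z^2-1$ via the quasiconformal conjugacy between the non-point components of the filled Julia set of $F$ and the basilica Julia set. For the basilica itself, the relevant fact is classical: the Julia set has a unique fixed point $\alpha$ that is not the $\beta$-fixed point (the landing point of the $0$-ray), and $\alpha$ is precisely the point where the two bounded Fatou components meet; the two external rays landing at $\alpha$ are $r_e(1/3)$ and $r_e(2/3)$. Equivalently, $\alpha = \lambda_e(1/3) = \lambda_e(2/3)$, which is why $1/3$ is the prototypical basilica angle in Definition \ref{def: bas pair}.

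First I would recall the structure of a map $F = (r,t)_2^B \in \E_2^B$: the marked critical point $a$ has exact period $2$, so it lies in a periodic Fatou component $U_0$ with $F(U_0) = U_1$, $F(U_1) = U_0$, and each of $U_0, U_1$ is a quasidisk; more precisely, since $F \in \E_2^B$, the component of the filled Julia set containing $a$ is a quasiconformal copy of the basilica filled Julia set, with the two interior components playing the roles of the two bounded Fatou components of $z^2-1$. Next I would locate the fixed point: under $F^2$ restricted to a neighborhood of $\overline{U_0}$, the dynamics is quasiconformally conjugate to $(z^2-1)^2$ near the corresponding piece of the basilica, and the unique point of $\partial U_0 \cap \partial U_1$ maps to itself under $F$ (it is the unique common boundary point of the two components in the cycle, by the basilica model), so call it $\alpha$; it is repelling because the conjugacy carries the repelling fixed point of the basilica to it. Uniqueness of a fixed point on $\partial U_0 \cup \partial U_1$ then follows from uniqueness in the basilica model together with the fact that the conjugacy is a bijection on these boundaries. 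Finally, once $\alpha$ is fixed, its forward orbit is $\{\alpha\}$, so the associated critical value $v = F(-a)$ — wait, more carefully, the statement wants us to also pin down $v$: since $a$ has period $2$, the critical value $f(a)$ lies in $U_1$ and is well-defined; I would simply note that $v$ here denotes $F$ evaluated at the marked critical point (so $v \in U_1$), and record it for use in later sections.

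The main obstacle is making precise the sense in which ``all non-point components of the filled Julia set in $\E_2^B$ are quasiconformal copies of the basilica'' — this is asserted in the Preliminaries but I would want to invoke it carefully, ensuring that the quasiconformal conjugacy can be taken to respect the boundary dynamics (so that fixed points go to fixed points and external ray landing patterns are preserved). For that I would appeal to the theory of polynomial-like maps and the fact that, throughout an escape region of multiplicity one, the first-return map to a neighborhood of the periodic component containing $a$ is polynomial-like of degree $2$ and hybrid equivalent to $z^2-1$ (this is the content of the characterization of $\E_2^B$ in \cite{CP2}). The identification of the unique non-$\beta$ fixed point of $z^2 - 1$ with the pinch point of the basilica, and the determination that exactly the rays $1/3, 2/3$ land there, is then an elementary computation with the doubling map that I would state without detailed proof. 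Everything else — that $\alpha$ lies on the boundary of the component containing $a$, that it is the unique such fixed point, and that it is repelling — transfers directly through the hybrid conjugacy.
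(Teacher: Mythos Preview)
Your proposal is correct and takes essentially the same approach as the paper: both argue that the restriction of $F$ to a neighborhood of the main basilica component of $K(F)$ is hybrid equivalent to $z \mapsto z^2 - 1$, and then transport the unique $\alpha$-fixed point of the basilica through this conjugacy. The paper's proof is simply the one-line statement of this hybrid equivalence, whereas you have unpacked the consequences (uniqueness, repelling, location on $\partial U_0 \cap \partial U_1$) more explicitly; your brief digression about the meaning of $v$ is unnecessary, since $v$ here is just $F(a)$, the Fatou component partner of $a$ in the period-$2$ cycle.
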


\begin{proof}
There is a neighborhood $U$ of the main basilica component of $K(F)$ such that the restriction of $F$ to $U$ is hybrid equivalent to the basilica map $z \mapsto z^{2} - 1$. 
\end{proof}

\begin{figure}[htb!]
    \centering
    \includegraphics[width=2.75 in]{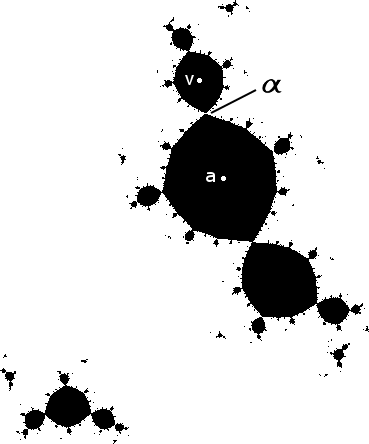}
    \caption{The filled Julia set of a map in $\E_{2}^{B}$ showing the placement of the fixed point $\alpha$.}
    \label{fig: avalpha}
\end{figure}

In particular, every polynomial in $\E_2^B$ has a distinguished fixed point. We will refer to this distinguished fixed point as $\alpha$.


\section{Preliminary results}\label{sec:preres}

In this section, we build a portfolio of results that will allow us to tackle the proof of the main theorem in Section~\ref{sec:main}.


\subsection{Behavior of the Conformal Isomorphism $\Psi$}

We would like to now describe the relationship between $J(f)$ and $J(\Psi(f))$ for $f \in \Sone$. Let $f \in \E_1 \cup \E_2^B$. Since $f \in R_e(t)$ if and only if $r_e(t)$ lands at the cocritical point of $f$, we see that the rays of angle $t - \frac13$ and $t + \frac13$, along with their landing point $-a$, split the plane into two regions. We denote these regions by $U_0$, which contains the critical point $a$, and $U_1$. Now observe that any ray landing on the main component of the filled Julia set $K(f)$ must have argument in the arc $\left( t - \frac13,t + \frac13 \right)$ (see Figure \ref{fig: creq updated}), and so must be contained in $U_0$. We state this as a lemma.

\begin{lemma}
Any pair of rays landing at $\alpha$ for a map $(\rho,t)_{2}^{B}$ must have both arguments in $(t - \frac{1}{3},t + \frac{1}{3}).$
\label{lem: pairs land alpha}
\end{lemma}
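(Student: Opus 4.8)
The plan is to use the dynamical splitting of the plane introduced just before the statement. For a map $F = (\rho,t)_2^B \in \E_2^B$, the free critical point $-a$ is the landing point of exactly the two external rays $r_e(t-\tfrac13)$ and $r_e(t+\tfrac13)$ (this is precisely the condition $F \in R_e(t)$ in parameter space, read dynamically). These two rays, together with their common landing point $-a$, cut $\Chat$ into two open regions $U_0 \ni a$ and $U_1$. The key observation, already recorded in the discussion preceding the lemma, is that the main component of $K(F)$ — the Fatou component containing the marked critical point $a$, on whose boundary $\alpha$ sits by Lemma~\ref{lem: ! fp alpha} — lies entirely in $U_0$, and hence any external ray that lands on $\alpha$ must have argument in the open arc $(t-\tfrac13,\,t+\tfrac13)$.

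First I would make precise that $-a \notin \partial(\text{main component})$, so that $\alpha \neq -a$ and $\alpha$ genuinely lies in the interior arc side: indeed $-a$ is a critical point whose orbit escapes (as $F \in \E_2^B$), while $\alpha$ is a repelling fixed point in $J(F)$, so they are distinct, and the two rays at $-a$ therefore do not land at $\alpha$. Next, let $r_e(\phi)$ be any ray landing at $\alpha$. Since $\alpha$ lies on the boundary of the main Fatou component, which is contained in $\overline{U_0}$ and meets $U_1$ only possibly along the two boundary rays at $-a$, the ray $r_e(\phi)$ must approach a point of $\partial \overline{U_0}$; as $\phi$ is not one of the two cocritical arguments (those rays land at $-a \neq \alpha$), the ray $r_e(\phi)$ is disjoint from the boundary rays and hence lies in the open region $U_0$. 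Finally, the set of arguments of rays contained in $U_0$ is exactly the open arc $(t-\tfrac13,\,t+\tfrac13)$, by construction of $U_0$ as the component of $\Chat \setminus (r_e(t-\tfrac13) \cup \{-a\} \cup r_e(t+\tfrac13))$ containing $a$; so $\phi \in (t-\tfrac13,\,t+\tfrac13)$, and the same applies to the partner argument of any pair landing at $\alpha$.

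The main obstacle, and the only place requiring genuine care, is the topological claim that the entire orbit portrait of the main component sits in $U_0$ — equivalently, that no ray landing on the main basilica component can cross one of the two cocritical rays. This is where one uses that $-a$ is the \emph{only} landing point shared by $U_0$ and $U_1$ on that common boundary, together with the fact (from Lemma~\ref{lem: ! fp alpha} and the hybrid equivalence to $z\mapsto z^2-1$) that the main component is a bounded Jordan domain whose closure is connected and avoids $-a$'s rays except at $-a$ itself; since the main component is connected and does not contain $-a$, it lies in one of the two regions, and as it contains $a$ it lies in $U_0$. A ray landing at a boundary point of this component must, near its landing point, enter a neighborhood disjoint from the closures of the cocritical rays (again because that boundary point is not $-a$), so it lies eventually in $U_0$; being a ray, if it ever left $U_0$ it would have to cross a cocritical ray, contradicting that distinct external rays are pairwise disjoint. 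Hence $\phi \in (t-\tfrac13,t+\tfrac13)$ as claimed.
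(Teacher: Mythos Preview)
Your proposal is correct and follows exactly the paper's approach: the paper gives no separate proof of this lemma but simply records, in the paragraph immediately preceding it, the observation that the two rays $r_e(t\pm\tfrac13)$ together with $-a$ cut the plane into $U_0$ and $U_1$, and that any ray landing on the main component of $K(F)$ (hence on $\alpha$) must lie in $U_0$ and so have argument in $(t-\tfrac13,t+\tfrac13)$. You have essentially written out that observation with full justification (including the check that $\alpha\neq -a$ and the non-crossing of distinct external rays), which is more detail than the paper itself supplies; the one cosmetic slip is calling the main component a ``Jordan domain'' when for $F\in\E_2^B$ it is a filled basilica copy, but only its connectedness is used, so the argument is unaffected.
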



\begin{figure}[htb!]
    \centering
    \includegraphics[height=2.5in]{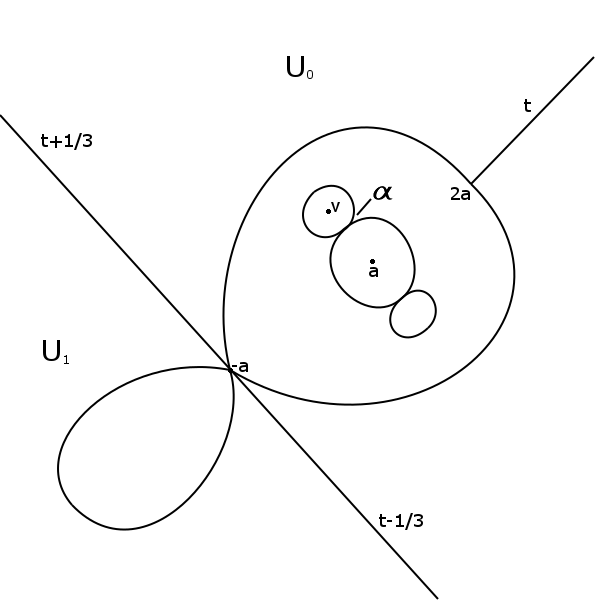}
    \caption{An illustration of a typical boundary of the maximal domain of definition of the B{\"o}ttcher coordinate for a map on the external parameter ray of argument $t$, with important dynamical external rays marked. The same applies to a map in $\E_{1}$ with a disc in place of the main basilica component illustrated.}
    \label{fig: creq updated}
\end{figure}

\begin{lemma}
At most four period two rays could possibly land at $\alpha$ for any $F \in \E_{2}^{B}$.
\label{lem: lim-land-alpha}
\end{lemma}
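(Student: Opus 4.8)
The plan is to count the period-two angles under tripling and control how many of them can land at $\alpha$, using the constraint from Lemma~\ref{lem: pairs land alpha} together with the dynamics of the basilica-type behavior near the main component of $K(F)$. First I would recall that the angles of period two under tripling are solutions of $3^2 t \equiv t \pmod 1$ which are not fixed, i.e. the elements of $A_2 = \left\{ \frac{m}{8} : m = 1,\dots,7,\ 8 \nmid m,\ \text{period exactly }2\right\}$; concretely these are $\frac{1}{8}, \frac{2}{8}, \frac{3}{8}, \frac{5}{8}, \frac{6}{8}, \frac{7}{8}$, giving three orbits of size two under $t \mapsto 3t$. So a priori there are six period-two angles available.

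Next I would bring in the arc restriction. For $F = (\rho,t)_2^B$, Lemma~\ref{lem: pairs land alpha} says any ray landing at $\alpha$ has argument in the open arc $I = \left(t - \frac13, t + \frac13\right)$, an arc of length $\frac23$. Since the six period-two angles are distributed around the circle, I would argue that an arc of length $\frac23$ can contain at most four of them: the $\frac18$-spacing pattern of $A_2$ around the circle means any half-open arc of length $\frac23 = \frac{5.33}{8}$ captures at most $\lceil \frac23 \cdot 8\rceil$-ish many, but one must be careful since the endpoints $t \pm \frac13$ are excluded. The cleaner approach: $\alpha$ is a fixed point, and the rays landing at it form a cycle under the doubling-like action induced on angles landing on the main component — since $\alpha$ is fixed, the set of period-two angles landing at $\alpha$ is forward-invariant under tripling restricted to $I$; but tripling does not preserve $I$ in general. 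Instead I would use that the rays landing at $\alpha$ must be permuted by $f$, hence come in complete grand orbits among the period-two angles, so the number landing at $\alpha$ is a sum of orbit sizes (each $2$), giving an even number; combined with the arc bound ruling out all six (since an arc of length $\frac23$ strictly between $t-\frac13$ and $t+\frac13$ cannot contain all of $\frac18,\frac28,\frac38,\frac58,\frac68,\frac78$ for any $t$, as these span more than $\frac23$ of the circle), we get at most four.

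The main obstacle I expect is making the combinatorial arc-counting argument airtight: one has to verify that for \emph{every} choice of $t$, the open arc $\left(t-\frac13,t+\frac13\right)$ omits at least two of the six period-two angles, and that the ones it does contain are consistent with being a union of full tripling-orbits of size two (so that the count is genuinely $\le 4$ and not, say, exactly $5$). I would handle this by a short case analysis on the position of $t$ relative to the $\frac18$-grid, noting that the gaps between consecutive period-two angles are $\frac18, \frac18, \frac28, \frac18, \frac18, \frac28$ around the circle, so any arc of length $\frac23$ necessarily falls short of covering the full circle by at least $\frac13$, which forces omission of at least the angles lying in a gap of total length $\ge \frac13$; checking that such a gap (union of consecutive elementary gaps) always exists and contains at least two period-two angles' worth of exclusion completes the bound. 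Finally I would reconcile this with the orbit-structure remark to conclude that at most four period-two rays land at $\alpha$, which is the assertion of Lemma~\ref{lem: lim-land-alpha}.
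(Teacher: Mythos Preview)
Your core argument matches the paper's one-line proof: the six period-two angles cannot all lie in an open arc of length $\frac{2}{3}$ (they span $\frac{3}{4}$ of the circle), so by Lemma~\ref{lem: pairs land alpha} at least one period-two ray fails to land at $\alpha$; since $\alpha$ is fixed, the set of rays landing there is invariant under tripling, forcing the orbit partner of that ray to miss $\alpha$ as well, leaving at most four. You state exactly this in your ``cleaner approach'' paragraph, and that is all the paper does.

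However, the surrounding discussion contains a genuine error and an unnecessary worry. The claim that an open arc of length $\frac{2}{3}$ contains at most four of the six angles $\tfrac{1}{8},\tfrac{2}{8},\tfrac{3}{8},\tfrac{5}{8},\tfrac{6}{8},\tfrac{7}{8}$ is false: for instance the arc $\bigl(t-\tfrac{1}{3},\,t+\tfrac{1}{3}\bigr)$ with $t=\tfrac{43}{75}$ contains the five angles $\tfrac{2}{8},\tfrac{3}{8},\tfrac{5}{8},\tfrac{6}{8},\tfrac{7}{8}$. Consequently, the ``main obstacle'' you identify---showing the arc always omits at least two of the six angles---is both unprovable and unnecessary. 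The arc argument by itself only excludes one angle; it is the invariance of the landing set under $F$ (your ``orbit-structure remark'') that upgrades one excluded ray to an excluded pair, and that step requires no case analysis on $t$ whatsoever. Drop the direct arc-counting attempt and the proposed case analysis, keep only the two-sentence argument, and your proof is the paper's.
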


\begin{proof}
Any arc containing all six period two arguments will necessarily have length greater than $\frac{2}{3}$. Therefore, by Lemma \ref{lem: pairs land alpha}, at least one pair must always be excluded.
\end{proof}

We now show that any period $2$ ray that enters $U_0$ must land on $\alpha$. To do this, we will decompose $\E_{2}^{B}$ into regions depending on how many pairs of rays belong to $U_0$. 

\begin{defn}
We will define the set $W := \underset{i=1,2,3,4}\bigcup W_{i}$, where $W_{1}$ will consist of all maps in $\E_{2}^{B}$ on parameter rays of argument $t \in (\frac{1}{24},\frac{2}{24})$, and $W_{2}, W_{3},$ and $W_{4}$ are defined similarly with the intervals $(\frac{10}{24},\frac{11}{24}), (\frac{13}{24},\frac{14}{24})$, and $(\frac{22}{24},\frac{23}{24})$, respectively, see Figure \ref{fig: S2w}.
\label{def: setW}
\end{defn}

\begin{figure}[htb!]
    \centering
    \includegraphics[height=5in]{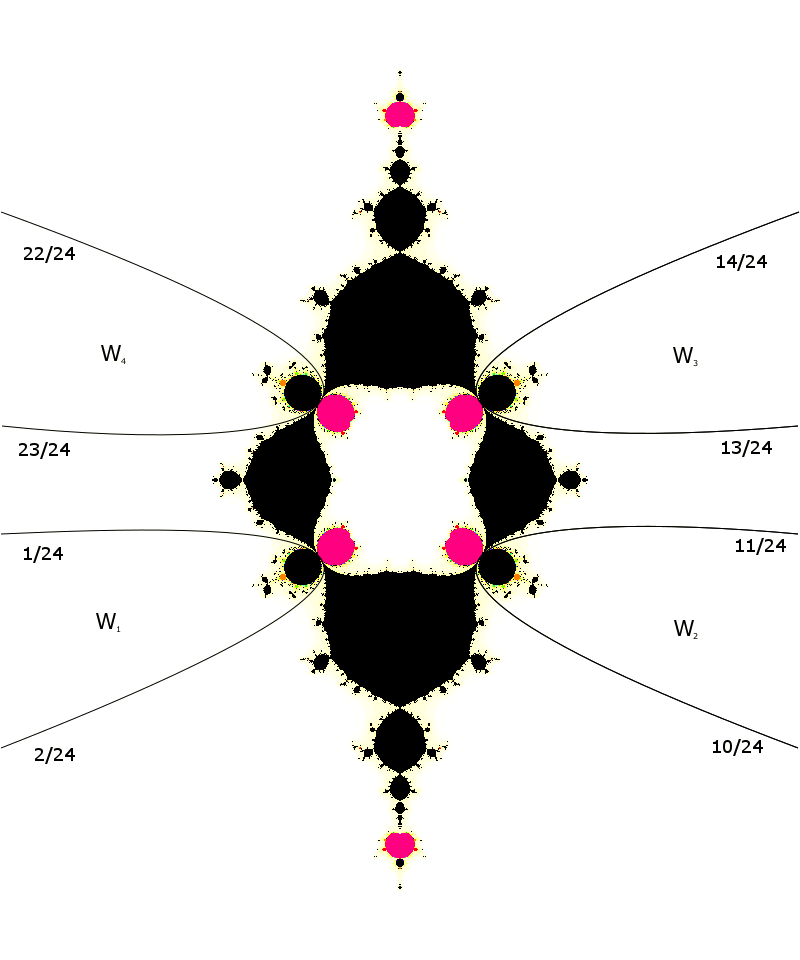}
    \caption{$\Stwo$ with the basilica escape region decomposed into regions according to the number of rays landing at the fixed point $\alpha$.}
    \label{fig: S2w}
\end{figure}

The following result explains what happens on the boundary $\partial W$. Note that this boundary consists of eight parameter rays, all of which are coperiodic of co-period two.

\begin{lemma}
Suppose $F \in R_{e}(t) \subset \E_{2}^{B}$. Then all period two dynamic rays land on $J(F)$ if and only if $t \in \R/\Z$ is not coperiodic of co-period two.
\label{lem: pertwoland}
\end{lemma}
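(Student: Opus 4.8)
The plan is to analyze which period two dynamical rays land on $J(F)$ by tracking them through the decomposition of $\E_2^B$ into the regions $W_i$ and their complement. First I would recall that there are six period two arguments under tripling, namely $\frac{m}{8}$ for $m \in \{1,2,3,5,6,7\}$ (those $m$ with $8 \mid 3^2-1$ and period exactly two), and that, since $F$ lies in $\E_2^B$, the filled Julia set has a main component which is a quasiconformal copy of the basilica, on whose boundary sits the distinguished fixed point $\alpha$. A period two ray either lands at a repelling periodic point of $J(F)$, or fails to land (equivalently, its argument lies in the ``forbidden'' region cut off by the cocritical rays $r_e(t \pm \frac13)$ and the free critical point $-a$). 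By Lemma~\ref{lem: pairs land alpha}, any period two ray entering $U_0$ that lands must land at $\alpha$; the strategy is to show that the rays entering $U_0$ are exactly the ones that land, and to show this dichotomy fails precisely when $t$ is coperiodic of co-period two.

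The forward direction is the substantive one. Assume $t$ is not coperiodic of co-period two; I would show every period two ray lands. By Theorem~\ref{thm: BM}, for any $F$ lying in a \emph{face} of $\Tess_2(\overline{\Stwo})$ — i.e.\ for any $F$ not on a coperiod two parameter ray — every period two dynamical ray lands at a repelling periodic point, and the orbit portrait is constant on that face. So the content is really to identify, face by face, which period two rays land at $\alpha$: one uses Lemma~\ref{lem: pairs land alpha} to see that only arguments in $(t-\frac13, t+\frac13)$ can pair up at $\alpha$, Lemma~\ref{lem: lim-land-alpha} to see at most two pairs (four rays) do, and the explicit endpoints $\frac{1}{24}, \frac{2}{24}, \dots$ of Definition~\ref{def: setW} to track exactly when a basilica pair of period two rays crosses into $U_0$ as $t$ varies. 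The key computation is that the six period two arguments, translated by $\pm\frac13$, give the twelve angles $\frac{k}{24}$ with $k$ odd together with the period-two angles themselves; the thresholds $\frac{1}{24},\frac{2}{24},\frac{10}{24},\dots$ are exactly the parameter arguments $t$ at which one of $t \pm \frac13$ equals a period two argument, i.e.\ at which $t$ is coperiodic of co-period two. Between consecutive thresholds the set of period two rays inside $U_0$ is constant, and in each such face all six period two rays land (the ones in $U_0$ at $\alpha$ or its orbit, the ones in $U_1$ at the appropriate repelling cycle coming from the basilica copies), which is what Theorem~\ref{thm: BM} guarantees once we know $F$ is in a face.

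For the converse, suppose $t$ \emph{is} coperiodic of co-period two, so one of $t \pm \frac13$ is a period two argument $\theta_0$; then $F$ lies on the coperiod two parameter ray $R_e(\theta_0 \mp \frac13)$, which is exactly the condition under which the dynamical ray $r_e(\theta_0)$ lands at the cocritical point (by the defining property $f \in R_e(\tau) \iff r_e(\tau)$ lands at the cocritical point, applied with the cocritical/critical relation). A ray landing at the cocritical point $2a$ is mapped by $f$ to a ray landing at the critical value, and such a period two ray cannot land at a \emph{repelling} periodic point in the usual sense — more precisely, the argument $\theta_0$ sits on the boundary of $U_0$, its landing behavior degenerates, and I would argue it fails to land at a point of $J(F)$ (it ``crashes into'' the critical orbit). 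Hence not all period two rays land. The main obstacle I anticipate is the careful bookkeeping in the forward direction: correctly matching each of the six period two arguments to its basilica partner under the quasiconformal conjugacy on the main component, and verifying that as $t$ sweeps through a face none of these six rays can be lost — this requires knowing that the only way a period two ray can fail to land is by being separated from the Julia set by the cocritical rays, which in turn rests on the face-constancy of the orbit portrait from Theorem~\ref{thm: BM} and the fact that the relevant repelling cycles persist holomorphically across the face. The endpoint arithmetic (that $W$'s eight boundary rays are exactly the eight coperiod two rays) is routine but must be done explicitly.
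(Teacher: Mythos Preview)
Your proposal reaches the right conclusion but by a different and more circuitous route than the paper, and it contains one factual slip that should be fixed.

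\textbf{Comparison with the paper.} The paper's proof is short and completely elementary: it never invokes Theorem~\ref{thm: BM}, the regions $W_i$, or anything about $\alpha$. For the direction ``$t$ coperiodic $\Rightarrow$ some period two ray fails to land'' it simply observes that the two dynamical rays $r_e(t\pm\tfrac13)$ crash into the free critical point $-a$; if $t$ is coperiodic of co-period two then one of these is a period two ray, and a ray crashing into an escaping critical point does not land on $J(F)$. For the other direction it argues the contrapositive directly: if a period two ray $r_e(\tilde t)$ fails to land, then some iterated preimage of $-a$ lies on it; pushing forward by $F$ until one reaches $-a$ itself shows that either $r_e(\tilde t)$ or $r_e(3\tilde t)$ crashes into $-a$, so $\tilde t$ or $3\tilde t$ lies in $\{t-\tfrac13,t+\tfrac13\}$, which is exactly the statement that $t$ is coperiodic of co-period two. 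No tessellation faces, no tracking of which rays hit $\alpha$.

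Your forward direction via Theorem~\ref{thm: BM} is legitimate and in fact a one-line argument: if $t$ is not coperiodic of co-period two then $F$ lies in a face of $\Tess_2(\overline{\Stwo})$, and Theorem~\ref{thm: BM} immediately gives that every period two ray lands. Everything you write after that about matching rays to basilica partners, sweeping $t$ through a face, and the endpoint arithmetic of $W$ is unnecessary for \emph{this} lemma (that analysis belongs to Lemmas~\ref{lem: no.raysalpha}--\ref{lem: four rays land}, which concern \emph{where} the rays land, not \emph{whether} they land). So your approach trades an elementary obstruction argument for an appeal to a cited theorem; both work, but yours is overloaded with material the statement does not require.

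\textbf{The slip in your converse.} You write that $r_e(\theta_0)$ ``lands at the cocritical point'' $2a$. This is backwards. The defining property of $F\in R_e(t)$ is that $\bott_F(2a)$ has argument $t$, so it is the ray $r_e(t)$ that passes through the cocritical point; the two companion rays $r_e(t\pm\tfrac13)$ are the ones that crash into the \emph{critical} point $-a$ (they are the other two preimages of $r_e(3t)$). Since $\theta_0 = t\pm\tfrac13$, the ray $r_e(\theta_0)$ terminates at $-a$, which lies in the basin of infinity, and therefore does not land on $J(F)$. With this correction your converse is fine and matches the paper's first paragraph.
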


\begin{proof}
Suppose first that $t$ is coperiodic of coperiod two. Then one of the dynamic rays $r_{e}(t + \frac{1}{3})$ and $r_{e}(t - \frac{1}{3})$ is a period two ray which crashes into the free critical point $-a$ and therefore does not land on $J(F)$. Conversely, suppose that there is a period two dynamic ray $r_{e}(\tilde{t})$ which does not land. Then there is a minimal $n \in \N$ such that $F^{-n}(-a) \in r_{e}(\tilde{t})$. Taking forward iterates, it then follows that one of the rays $r_{e}(\tilde{t})$ or $r_{e}(3\tilde{t})$, depending on the parity of $n$, crashes into the free critical point $-a$. This implies that $\tilde{t} \in \{t - \frac{1}{3}, t + \frac{1}{3}\}$ or $3\tilde{t} \in \{t - \frac{1}{3}, t + \frac{1}{3}\}$, and therefore that $t$ is coperiodic of co-period two by definition.   
\end{proof}
 
 A simple combinatorial argument and the above lemma shows that for $F = (\rho,t) \in \E_{2}^{B} \setminus W$, exactly two period two rays are contained in $U_0$. For example, if $t \in (\frac{23}{24},\frac{1}{24})$ then only the period two rays of angle $2/8$ and $6/8$ can enter $U_0$. On the other hand, if $F = (\rho,t) \in W$, then there are four period two rays entering $U_0$. An analogous result holds in $\E_1$. These are summarized in the following lemmas and illustrated in Figure~\ref{fig: S2p}.
 
 \begin{lemma}
 For $(\rho,t)_{2}^{B} \in \E_{2}^{B} \setminus W$, there is exactly one possible pair of period two rays contained in $U_0$, and for $(\rho,t)_{2}^{B} \in W$, there are exactly two pairs of period two rays which are contained in $U_0$.  \label{lem: no.raysalpha}
 \end{lemma}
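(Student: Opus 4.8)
The plan is to reduce the assertion to an elementary count on the circle $\R/\Z$. Recall that the arguments of exact period two under tripling are the six points $\tfrac18,\tfrac28,\tfrac38,\tfrac58,\tfrac68,\tfrac78$, and that tripling permutes them in the three two-cycles
\[
 P_1=\left\{\tfrac18,\tfrac38\right\},\qquad P_2=\left\{\tfrac28,\tfrac68\right\},\qquad P_3=\left\{\tfrac58,\tfrac78\right\}.
\]
These are the only candidates for pairs of period two rays landing together at the fixed point $\alpha$: external rays meeting at a fixed point are permuted among themselves by the dynamics, hence form a union of tripling-cycles, and by Lemma~\ref{lem: lim-land-alpha} at most four period two rays can land at $\alpha$. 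Writing $I_t:=\left(t-\tfrac13,\,t+\tfrac13\right)$, the first point to make is that, for a map $(\rho,t)_2^B$, a period two dynamic ray $r_e(\theta)$ is contained in $U_0$ exactly when $\theta\in I_t$: by construction $U_0$ is the component containing the marked critical point of the complement of $\overline{r_e(t-\tfrac13)}\cup\{-a\}\cup\overline{r_e(t+\tfrac13)}$, and its angular sector at infinity is precisely the arc $I_t$ (this is the content of the discussion around Lemma~\ref{lem: pairs land alpha}). Thus the lemma amounts to counting, for a given $t$, the number of $i\in\{1,2,3\}$ with $P_i\subset I_t$.

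For each $i$ I would compute the set $T_i:=\{t\in\R/\Z : P_i\subset I_t\}$. Since $I_t$ is an open arc of length $\tfrac23$, and $P_1$ and $P_3$ each lie in a closed sub-arc of length $\tfrac14$ while $P_2$ cuts the circle into two arcs of length $\tfrac12$, a direct computation yields
\[
 T_1=\left(\tfrac1{24},\tfrac{11}{24}\right),\qquad T_3=T_1+\tfrac12=\left(\tfrac{13}{24},\tfrac{23}{24}\right),\qquad T_2=\left(\tfrac{10}{24},\tfrac{14}{24}\right)\cup\left(\tfrac{22}{24},\tfrac2{24}\right),
\]
the last arc read modulo $1$; in particular each endpoint of each $T_i$ is an argument coperiodic of co-period two. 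Comparing with Definition~\ref{def: setW} one checks directly that $T_1\cup T_2\cup T_3=\R/\Z$, that $T_1\cap T_3=\varnothing$, and that
\[
 T_1\cap T_2=W_1\cup W_2,\qquad T_2\cap T_3=W_3\cup W_4.
\]
Since no three of the $T_i$ meet, exactly two of $P_1,P_2,P_3$ lie in $I_t$ when $t$ belongs to one of the four intervals defining $W$, and exactly one does for every other $t$ — in particular for the eight coperiod-two parameter rays making up $\partial W$, on which one endpoint of the open arc $I_t$ is a period two argument and so contributes nothing (here one may also invoke Lemma~\ref{lem: pertwoland}). In view of the reduction of the first paragraph, this is exactly the statement of the lemma.

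The count itself is mechanical; the point that requires care is the reduction in the first paragraph, namely confirming that the pairs we must count are precisely the three tripling two-cycles $P_1,P_2,P_3$ and that being ``contained in $U_0$'' is equivalent to the combinatorial condition $\theta\in I_t$. This rests on the explicit description of $U_0$ and its angular sector together with Lemmas~\ref{lem: pairs land alpha} and~\ref{lem: lim-land-alpha}. Once this is in place, the bookkeeping in the second paragraph — summarised in Figure~\ref{fig: S2p} — finishes the proof, and the analogous statement for $\E_1$ follows in the same way from the corresponding description of its regions $U_0$.
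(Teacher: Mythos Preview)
Your proof is correct and is essentially the detailed execution of the ``simple combinatorial argument'' that the paper invokes but does not write out: the paper only gives one worked example (for $t\in(\tfrac{23}{24},\tfrac1{24})$) and defers the rest to Figure~\ref{fig: S2p}, whereas you compute the three sets $T_i$ explicitly and verify the intersections against Definition~\ref{def: setW}. The approach is the same.
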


\begin{lemma}
  For $(\rho,t)_{1} = \Psi^{-1}((\rho,t)_{2}^{B})$, if $(\rho,t)_{2}^{B} \in \E_{2}^{B} \setminus W$, then there is exactly one pair of period two rays contained in $U_0$, and if $(\rho,t)_{2}^{B} \in W$, then there are exactly two pairs of period two rays contained in $U_0$. 
  \label{lem: land basin of a}
  \end{lemma}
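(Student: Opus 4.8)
The plan is to transport the ray-count information for $\E_2^B$ established in Lemma \ref{lem: no.raysalpha} across the conformal isomorphism $\Psi$ to the escape region $\E_1$, using the fact that $\Psi$ preserves the coordinate $(\rho,t)$ and, more importantly, that both $\Psi^{-1}((\rho,t)_2^B)$ and $(\rho,t)_2^B$ sit on the parameter ray of the same argument $t$ in their respective slices. The key observation (already recorded in the discussion preceding Lemma \ref{lem: pairs land alpha}) is that for any $f$ in $\E_1\cup\E_2^B$ lying on the parameter ray $R_e(t)$, the dynamical rays of angle $t-\tfrac13$ and $t+\tfrac13$ land together at the free critical point $-a$ and cut the plane into the two regions $U_0\ni a$ and $U_1$; this decomposition depends only on $t$, not on which slice we are in. Hence a period two dynamical angle $\theta$ lies in the arc $(t-\tfrac13,t+\tfrac13)$ — equivalently $r_e(\theta)\subset U_0$ — by exactly the same combinatorial condition for a map in $\E_1$ as for the corresponding map in $\E_2^B$.

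First I would argue that the period two dynamical rays behave identically, as sets of angles entering $U_0$, for $(\rho,t)_1$ and $(\rho,t)_2^B$. Concretely: a period two ray $r_e(\theta)$ for a map on $R_e(t)$ actually lands on $J$ (rather than crashing into $-a$ or a preimage of it) precisely when $t$ is not coperiodic of coperiod two — this is the content of Lemma \ref{lem: pertwoland}, whose proof is purely combinatorial in $t$ and the tripling dynamics on angles and so applies verbatim in $\E_1$. Second, among those rays that do land, membership of the landing configuration in $U_0$ is governed solely by whether $\theta\in(t-\tfrac13,t+\tfrac13)$, which is again a statement about $t$ alone. Thus the same combinatorial count performed after Lemma \ref{lem: pertwoland} — two period two angles in the arc $(t-\tfrac13,t+\tfrac13)$ when $t\notin$ (the union of the four open intervals defining $W$), and four such angles when $t$ lies in one of those intervals — yields the number of period two rays contained in $U_0$ for $(\rho,t)_1$ as well.

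The remaining point is to pass from "period two rays in $U_0$" to "pairs of period two rays in $U_0$". In $\E_2^B$ this was Lemma \ref{lem: no.raysalpha}: the period two rays in $U_0$ all land at the distinguished fixed point $\alpha$, so they organize into pairs (one pair when there are two such rays, two pairs when there are four). For $\E_1$ one uses instead the tessellation/orbit portrait machinery of Theorem \ref{thm: BM}: on each face of $\Tess_2(\overline{\Sone})$ the landing pattern of the period two dynamical rays is constant, and for a face meeting $\E_1$ the rays in $U_0$ land on the boundary of the immediate basin $\widehat{\A}_a$ of the fixed critical point, forcing them to be identified in pairs exactly as on the basilica side. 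Since $\Psi$ carries parameter rays of argument $t$ to parameter rays of argument $t$, a map $(\rho,t)_1$ and its image $(\rho,t)_2^B$ sit over corresponding faces, so the pair counts agree; this gives the two stated cases according to whether $(\rho,t)_2^B\in W$ or not.

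The main obstacle I expect is the last step: verifying that the period two rays entering $U_0$ in $\E_1$ genuinely land in identified pairs on $\partial\widehat{\A}_a$ (rather than landing individually or at distinct points), i.e.\ matching the internal combinatorics of the fixed basin in $\E_1$ with that of the main basilica component in $\E_2^B$. This requires knowing that the dynamics on $\partial\widehat{\A}_a$ for $f\in\E_1$ restricts (up to the relevant conjugacy) to a doubling-type map whose biaccessible periodic points are exactly the basilica-pair landing points of period two, so that the ray identifications are forced. Everything else is either a direct combinatorial computation in angles or an appeal to results already proven (Lemmas \ref{lem: pairs land alpha}, \ref{lem: pertwoland}, \ref{lem: no.raysalpha} and Theorem \ref{thm: BM}).
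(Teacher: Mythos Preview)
Your first two paragraphs capture exactly what is needed, and this is also all the paper does: it gives no proof of this lemma, only the sentence ``An analogous result holds in $\E_1$'' preceding the statement. The point is precisely that membership of a period-two dynamical ray in $U_0$ is decided by whether its angle lies in the arc $(t-\tfrac13,t+\tfrac13)$, a condition on $t$ alone, and $\Psi$ preserves $t$.

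Your third paragraph, however, is based on a misreading of the word ``pair''. In this lemma (as in Lemma~\ref{lem: no.raysalpha} and in the later Lemma~\ref{lem: four rays land}, where the pairs are written out explicitly as $\{t_1,t_2\}=\{\tfrac18,\tfrac38\}$ and $\{t_3,t_4\}=\{\tfrac28,\tfrac68\}$), a \emph{pair of period two rays} means a period-two orbit $\{s,3s\}$ of angles under tripling, not two rays with a common landing point. The assertion is therefore purely combinatorial: how many of the three period-two orbits $\{\tfrac18,\tfrac38\}$, $\{\tfrac28,\tfrac68\}$, $\{\tfrac58,\tfrac78\}$ have \emph{both} angles inside the arc $(t-\tfrac13,t+\tfrac13)$. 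There is no passage to be made from ``rays in $U_0$'' to ``pairs landing together''; the count of complete orbit pairs is the statement.

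Consequently the ``main obstacle'' you flag --- showing that the period-two rays entering $U_0$ for a map in $\E_1$ land in identified pairs on $\partial\widehat{\A}_a$ --- is not part of this lemma at all. That landing behaviour is exactly the content of the \emph{next} result, Lemma~\ref{lem: land two cycle}, which is proved separately using Theorem~\ref{thm: BM}. Your plan would still go through (you would simply be proving more than asked), but you should drop paragraph~3 and the associated worry: once the purely angular count is transported from $\E_2^B$ to $\E_1$ via the equality of parameter angles, the lemma is finished.
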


\begin{figure}[h!tb]
    \centering
    \includegraphics[height=5in]{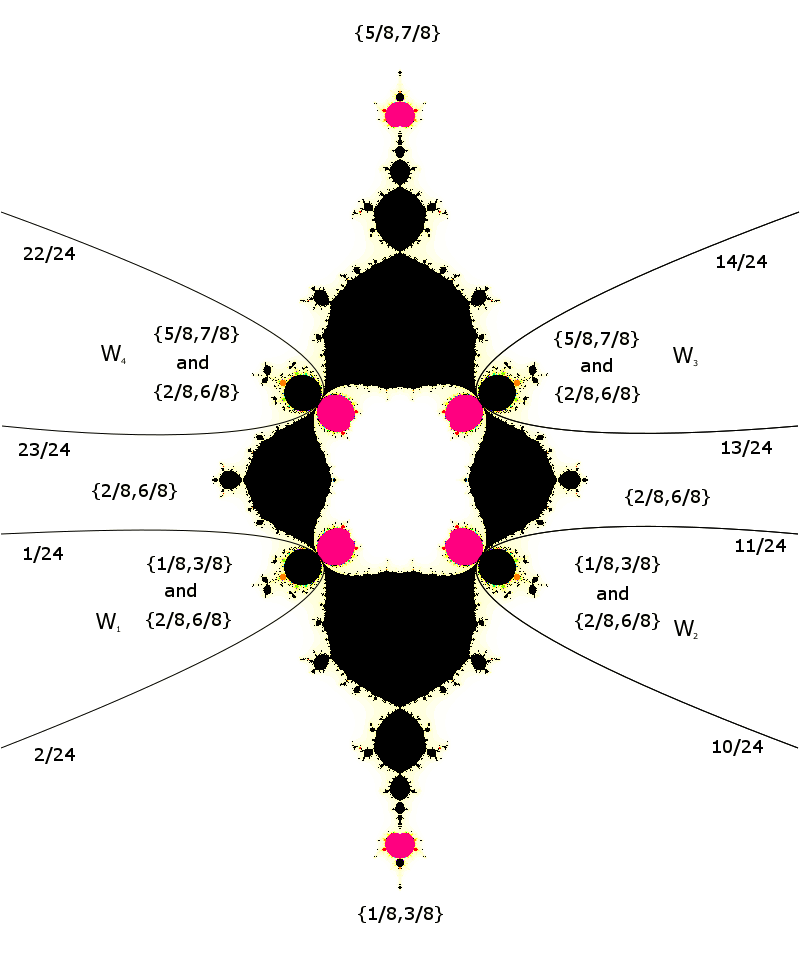}
    \caption{The pairs of rays landing at $\alpha$ in each  piece of the basilica escape region.}
    \label{fig: S2p}
\end{figure}

\begin{figure}[htb!]
    \centering
    \includegraphics[height=3.5in]{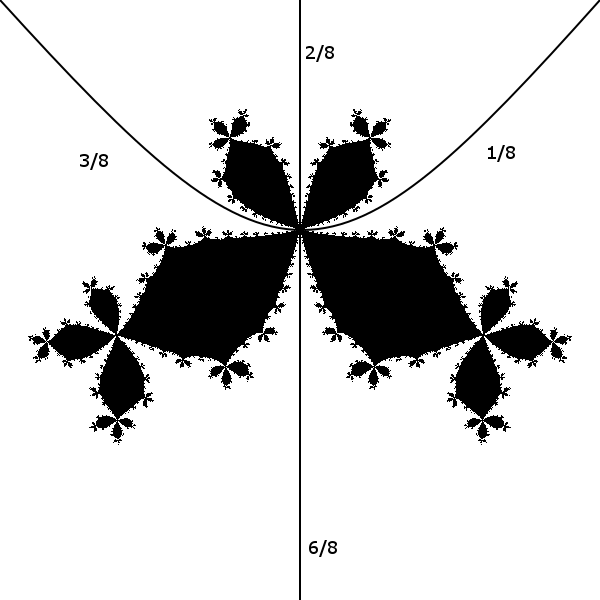} 
    \caption{The Julia set of the map at the center of the unique type $D$ component of period two in $W_{1}$. The period two external rays which land together are marked.}
    \label{fig: bowtie}
\end{figure}

We now show that if $F \in \E_2^B$, then any pair of period two rays contained in $U_0$ must land on $\alpha$. Similarly, if $F \in \E_1$, any pair of period two rays contained in $U_0$ must land on a period two cycle on the boundary of $\widehat{\A}_{a}$.


\begin{lemma}
For $(\rho,t)_{2}^{B} \in W$, let $\{t_{1},t_{2}\}$ and $\{t_{3},t_{4}\}$ be the two pairs of arguments of the rays contained in $U_0$. Then we have $\lambda_{e}(t_{i}) = \alpha$ for $i = 1,2,3,4.$
\label{lem: four rays land}
\end{lemma}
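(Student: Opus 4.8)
The plan is to deduce the lemma from the constancy of the period-two orbit portrait on the faces of $\Tess_2(\overline{\Stwo})$ (Theorem~\ref{thm: BM}), together with a direct computation at the center of a type $D$ component whose Julia set is the ``bowtie'' of Figure~\ref{fig: bowtie}.

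First I would check that each $W_i$ is contained in a single face of $\Tess_2(\overline{\Stwo})$. Its boundary consists of the two coperiod-two parameter rays at the endpoints of the defining interval (for $W_1$, of arguments $\tfrac{1}{24}$ and $\tfrac{2}{24}$), and enumerating the coperiod-two arguments shows that none lies strictly between consecutive endpoints; hence $W_i$ meets no coperiod-two parameter ray, so $W_i \subset \F$ for a single face $\F$. By Theorem~\ref{thm: BM} the orbit portrait for dynamical rays of period two is the same for all $F \in \F$; in particular, whether the rays $r_e^F(t_1),\dots,r_e^F(t_4)$ share a common landing point, and which of the $t_j$ are identified with one another, does not depend on $F \in W_i$.

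Next I would evaluate everything at the center $c_i$ of the unique type $D$ component of period two lying in the region of $\Stwo$ cut off by the two coperiod-two rays bounding $W_i$ (for $i=1$, this is the map of Figure~\ref{fig: bowtie}). Since no coperiod-two parameter ray enters a hyperbolic component, this type $D$ component, hence $c_i$, lies in the same face $\F$ as $W_i$, so it suffices to prove the statement at $c_i$. There $K(c_i)$ is connected, the two Fatou components carrying the critical points meet at the fixed point $\alpha_{c_i}$, and, reading off the dynamical rays marked in Figure~\ref{fig: bowtie}, the four period-two rays $r_e^{c_i}(t_1),\dots,r_e^{c_i}(t_4)$ all land at $\alpha_{c_i}$. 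By the constancy of the orbit portrait, for every $F \in W_i$ the four rays $r_e^F(t_j)$ then land at a single point $z(F)$, and $z(F)$ is a fixed point of $F$ because $\{t_1,\dots,t_4\}$ is a union of tripling orbits, so $F$ carries the common landing point to itself.

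Finally I would identify $z(F)$ with the distinguished fixed point $\alpha_F$ of Lemma~\ref{lem: ! fp alpha}. Since $z(F)$ supports at least four external rays it cannot be a one-point component of $K(F)$, so it lies on the boundary of the main basilica component $K_0$ — the only periodic non-point component of $K(F)$, as is standard for $F \in \E_2^B$. On $\partial K_0$ the basilica structure leaves only two fixed points: $\alpha_F$, the biaccessible point where the two Fatou components of the marked critical orbit meet, and the ``root'', which is not biaccessible; as $z(F)$ has more than one ray landing, $z(F) = \alpha_F$. (Alternatively, $z(F) = \alpha_F$ follows by continuity from the equality at $c_i$, since both are fixed points depending continuously on $F$ and $\alpha_F$, being repelling, never collides with another fixed point.) Running this argument for $i = 1,2,3,4$ proves the lemma. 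I expect the main obstacle to be the explicit input at $c_i$: pinning down the relevant type $D$ component of period two and its orbit portrait — genuinely understanding the combinatorial structure of the bowtie — so as to certify that the four period-two rays land together at $\alpha$; the remaining steps (that $W_i$ lies in one face, that the common landing point is fixed, and that a fixed point carrying several external rays must be the biaccessible fixed point on $\partial K_0$) are comparatively routine.
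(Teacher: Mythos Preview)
Your proposal is correct and follows essentially the same approach as the paper: use Theorem~\ref{thm: BM} to transport the period-two orbit portrait from the bowtie center of the type~$D$ component to all of $W_i$, verify at that center that the four period-two rays land together at the distinguished fixed point, and conclude. You are in fact more explicit than the paper on two points it leaves implicit---that the common landing point is fixed (because $\{t_1,\dots,t_4\}$ is tripling-invariant) and that a multiply-accessed fixed point on the main basilica component must be $\alpha$---so your write-up is, if anything, a slight strengthening of the paper's argument.
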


\begin{proof}
We just prove the result for $W_1$, the other cases are similar. Choose any $(\rho,t)_{2}^{B} \in W_{1}$. For this map we may take $t_{1} = \frac{1}{8}$, $t_{2} = \frac{3}{8}$, $t_{3} = \frac{2}{8}$, and $t_{4} = \frac{6}{8}$. From Theorem \ref{thm: BM}, we know that $(\rho,t)_{2}^{B}$ has the same period $2$ orbit portrait as any other map which belongs to the same component $\mathcal{F}_{k}$ of $\Tess_2(\overline{\Stwo})$ . In particular, we consider the unique post-critically finite polynomial $p$ in the period two hyperbolic component in $V_{i}$, as in Figure \ref{fig: bowtie}. For this map, the rays of arguments $t_{1}$, $t_{2}$, $t_{3}$, and $t_{4}$ all land together at a common fixed point on the boundaries of the Fatou components containing the marked critical point $a_{F}$ and its associated critical value $v_{F}$. Therefore, we conclude that for $(\rho,t)_{2}^{B}$, $\lambda_{e}(t_{i}) = \alpha$ for $i = 1,2,3,4.$
\end{proof}

\begin{figure}[htb!]
    \centering
    \includegraphics[height=3.5in]{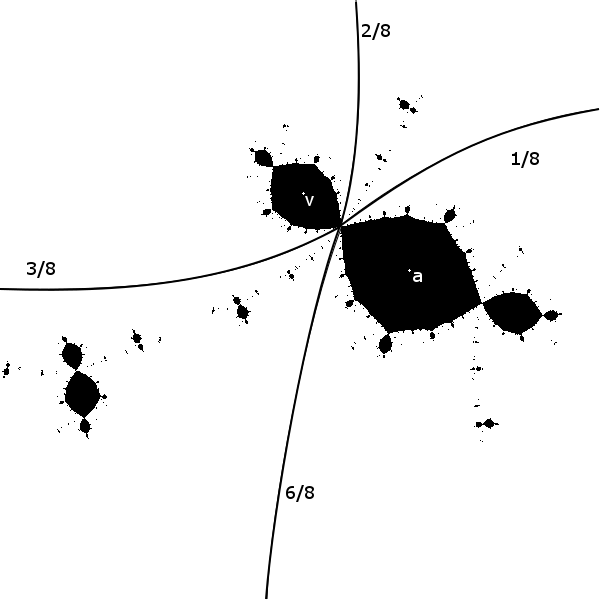} 
    \caption{A Julia set from $W_{1}$ showing the four rays landing at the fixed point $\alpha$.}
    \label{fig: w1-4}
\end{figure}

We now observe that any pair of period two dynamic rays which land at the fixed point $\alpha$ in the Julia set of a map $\Psi(F) \in \E_{2}^{B}$ corresponds with a pair of dynamical external rays for the map $F \in \E_{1}$ which land at distinct points in a two cycle on the boundary of the main disc component of the filled Julia set of $F$.

\begin{lemma}
Given $F = (\rho,t)_{1} \in \E_{1}$ and $\{s,\tilde{s}\} \in \R/\Z$ a two cycle under tripling for which $\lambda_{e}^{\Psi(F)}(s) = \lambda_{e}^{\Psi(F)}(\tilde{s}) = \alpha$ in $K(\Psi(F))$, we have $\lambda_{e}^{F}(s) \not= \lambda_{e}^{F}(\tilde{s})$ form a two cycle on $\partial \widehat{\A}_{a} \in K(F)$. 
\label{lem: land two cycle}
\end{lemma}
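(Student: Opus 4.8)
The plan is to reduce the statement to the period-two orbit portrait of $F$, evaluate that portrait at a postcritically finite map in the same face of $\Tess_{2}(\overline{\Sone})$, and transport the conclusion back using the invariance of orbit portraits on faces (Theorem~\ref{thm: BM}); this mirrors the proof of Lemma~\ref{lem: four rays land}. Since $\Psi$ preserves the coordinates $(\rho,t)$, the maps $F$ and $\Psi(F)$ lie on parameter rays of the same argument $t$. By hypothesis the rays $r_{e}^{\Psi(F)}(s)$ and $r_{e}^{\Psi(F)}(\tilde{s})$ land at $\alpha$, so Lemma~\ref{lem: pertwoland} shows $t$ is not coperiodic of co-period two; hence $F$ lies in a single face $\mathcal{F}_{k}$ of $\Tess_{2}(\overline{\Sone})$, and by Theorem~\ref{thm: BM} every period-two dynamical ray of $F$ lands at a repelling periodic point, these landing points depend holomorphically on the parameter in $\mathcal{F}_{k}$, and the period-two orbit portrait is constant on $\mathcal{F}_{k}$. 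Moreover Lemma~\ref{lem: pairs land alpha} gives $s,\tilde{s}\in(t-\tfrac13,t+\tfrac13)$, so $r_{e}^{F}(s)$ and $r_{e}^{F}(\tilde{s})$ lie in $U_{0}$; this is Lemma~\ref{lem: land basin of a}.

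Now I would argue that $r_{e}^{F}(s)$ and $r_{e}^{F}(\tilde{s})$ land at two distinct points of $\partial\widehat{\A}_{a}$ forming a two-cycle. Since $F\in\E_{1}$, the component $\widehat{\A}_{a}$ is a quasidisc; as $a$ is a superattracting fixed critical point and the only critical point in $\widehat{\A}_{a}$, its B\"ottcher coordinate extends to a conjugacy of $f|_{\widehat{\A}_{a}}$ with $z\mapsto z^{2}$ on $\D$, so $f$ restricted to $\partial\widehat{\A}_{a}$ is topologically conjugate to angle doubling, which has a single two-cycle. To pin down the orbit portrait I would read it off a postcritically finite representative $F_{0}\in\mathcal{F}_{k}$ at which $a$ is a superattracting fixed point: when $F$ lies in a sector of the analogue of the set $W$ (Definition~\ref{def: setW}) one takes the centre of the period-two type $D$ component inside that face --- the $\Sone$-analogue of the ``bowtie'' map of Figure~\ref{fig: bowtie}, with $-a$ on a superattracting two-cycle --- and for the other faces the postcritically finite map playing the corresponding role. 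Just as in Lemma~\ref{lem: four rays land}, one checks directly that for $F_{0}$ the rays $r_{e}^{F_{0}}(s)$ and $r_{e}^{F_{0}}(\tilde{s})$ land at the two (distinct) points of the two-cycle of $\partial\widehat{\A}_{a}$. By constancy of the orbit portrait the angles $s$ and $\tilde{s}$ are therefore not identified for $F$, so $\lambda_{e}^{F}(s)\neq\lambda_{e}^{F}(\tilde{s})$; since $3s\equiv\tilde{s}$ and $3\tilde{s}\equiv s\pmod 1$ we get $f(\lambda_{e}^{F}(s))=\lambda_{e}^{F}(\tilde{s})$ and $f(\lambda_{e}^{F}(\tilde{s}))=\lambda_{e}^{F}(s)$, so $\{\lambda_{e}^{F}(s),\lambda_{e}^{F}(\tilde{s})\}$ is a genuine two-cycle; and since this repelling two-cycle depends holomorphically on the parameter (Theorem~\ref{thm: BM}) and coincides at $F_{0}$ with the two-cycle of $\partial\widehat{\A}_{a}$ (which varies holomorphically as $\widehat{\A}_{a}$ does), it lies on $\partial\widehat{\A}_{a}$ at $F$ as well, as required.

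I expect the main obstacle to be the final transport step: concluding that the repelling two-cycle still lies on $\partial\widehat{\A}_{a}$ at an arbitrary $F\in\mathcal{F}_{k}\cap\E_{1}$, given that it does so at $F_{0}$. A face of $\Tess_{2}(\overline{\Sone})$ may be large and come close to the type $A$ component $\phyp$, where the structure of $\widehat{\A}_{a}$ and the doubling dynamics on its boundary degenerate, so one must rule out the two-cycle detaching from $\partial\widehat{\A}_{a}$ along the motion --- equivalently, that the two period-two cycles of $F$ not on $\partial\widehat{\A}_{a}$ never meet $\overline{U_{0}}$. This is handled either by confining the connecting path to the sub-region of $\mathcal{F}_{k}$ on which $\widehat{\A}_{a}$ remains a quasidisc (checking that this region still joins $F$ to $F_{0}$) together with a no-collision argument for the holomorphic motion, or by a direct separation argument placing those two cycles in the open region $U_{1}$.
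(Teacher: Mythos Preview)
Your approach is sound and shares the key ingredient (Theorem~\ref{thm: BM}) with the paper, but the paper organizes the argument differently and thereby avoids exactly the obstacle you flag at the end. The paper splits into two cases according to whether $\Psi(F)\in W$. Outside $W$, Lemma~\ref{lem: land basin of a} says there is exactly one pair of period-two rays in $U_{0}$; since the doubling dynamics on $\partial\widehat{\A}_{a}$ produces a unique (repelling) two-cycle there, and each point of that cycle must be the landing point of at least one period-two ray lying in $U_{0}$, a straight counting argument forces $r_{e}^{F}(s)$ and $r_{e}^{F}(\tilde s)$ to land at the two distinct points of that cycle. Inside $W$ (say $W_{1}$), the paper reads the period-two orbit portrait of $F$ off the $\Sone$ bowtie map via Theorem~\ref{thm: BM}, obtaining the classes $\{1/8,2/8\}$ and $\{3/8,6/8\}$; since $\{s,\tilde s\}$ is one of the genuine two-cycles $\{1/8,3/8\}$ or $\{2/8,6/8\}$, the portrait already gives $\lambda_{e}^{F}(s)\neq\lambda_{e}^{F}(\tilde s)$, and the same counting argument (now four rays in $U_{0}$, two per point of the boundary two-cycle) puts both landing points on $\partial\widehat{\A}_{a}$.

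The upshot is that the paper never transports the property ``lies on $\partial\widehat{\A}_{a}$'' along a holomorphic motion; it deduces it pointwise from the ray count in $U_{0}$ together with the existence of the repelling two-cycle on $\partial\widehat{\A}_{a}$. Your route would work if you completed the no-collision argument, but it is more laborious; the counting argument is the cleaner substitute and also spares you having to locate a postcritically finite representative in every non-$W$ face.
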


\begin{proof}
 There are two cases, depending on the number of rays landing on $\alpha$. First, suppose $\Psi(F) \in \E_{2}^{B} \setminus W$, so that only two external rays for $\Psi(F)$ land on $\alpha$ . Then by Lemma~\ref{lem: land basin of a} there is a unique period two cycle on $\partial \widehat{\A}_{a}$ for $F$ and these points must be $\lambda_{e}^{F}(s)$ and $\lambda_{e}^{F}(\tilde{s})$, since no other period two rays belong to $U_0$. 

For the second case, suppose $\Psi(F) \in W$, so that four external rays for $\Psi(F)$ land on $\alpha$. We will assume $\Psi(F) \in W_{1}$; the other cases follow by a similar argument.

If $\Psi(F) \in W_1$, then by Theorem \ref{thm: BM}, the period two orbit portrait of $\Psi(F)$ is the same as that for the map at the center of the unique period $2$ type $D$ component in $W_1$. The Julia set for this map is given in Figure~\ref{fig: bowtie} - we observe in particular that the rays of angles $\frac{1}{8}, \frac{2}{8}, \frac{3}{8}$ and $\frac{6}{8}$ all land together for this map. Thus the rays of angles $\frac{1}{8}, \frac{2}{8}, \frac{3}{8}$ and $\frac{6}{8}$ for $\Psi(F)$ must all land together at the point $\alpha$.

Again by Theorem \ref{thm: BM}, the period two orbit portrait of $F$ is the same as that of any other map in the period two decomposition of $\Sone$. In this component we have the $\frac{1}{8}$ and $\frac{2}{8}$ rays landing at a common point, as well as the $\frac{3}{8}$ and $\frac{6}{8}$ rays as in Figure \ref{fig: s1w1}. Therefore, we have the two cycle $$\lambda_{e}^{F}\left(\frac{1}{8}\right) = \lambda_{e}^{F}\left(\frac{2}{8}\right) \longleftrightarrow \lambda_{e}^{F}\left(\frac{3}{8}\right) = \lambda_{e}^{F}\left(\frac{6}{8}\right) \in \partial \widehat{\A}_{a}.$$ 
\end{proof}

\begin{figure}[htb!]
    \centering
    \includegraphics[height=4in]{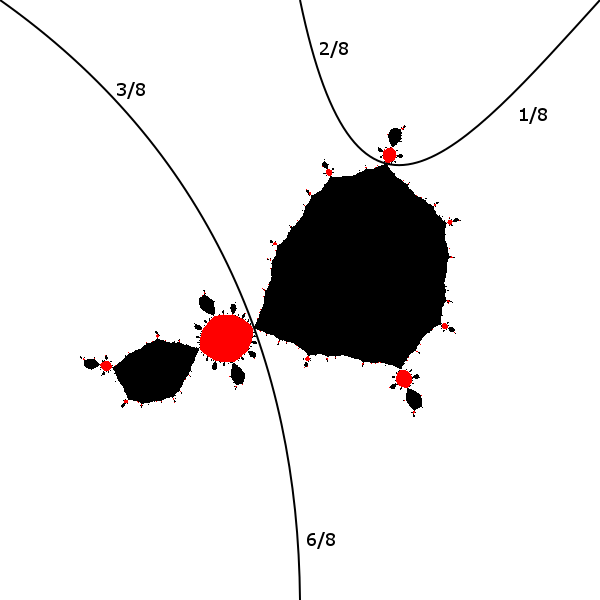}
    \caption{The Julia set of a map from the component of the period two decomposition of $\Sone$ which contains the entire preimage under $\Psi$ of $W_{1} \subset \Stwo$.}
    \label{fig: s1w1}
\end{figure}

The following result provides a combinatorial description of the behavior of $\Psi$. Informally, this can be thought of as taking each of the disc components of the filled Julia set $K(F)$ for a map $F \in \E_{1}$, and replacing them with copies of the basilica in order to obtain a topological object which is homeomorphic to $K(\Psi(F))$. Later on we will extend this result to the boundary of $\E_1$ in order to describe the behavior of $\widehat{\Psi}$.

\begin{lemma}
Given $F = (\rho,t)_{1} \in \E_{1}$, $K(\Psi(F))$ is homeomorphic to the quotient of $K(F)$ obtained by making the identifications from the lamination of the basilica along the internal rays of each disc component of $K(F)$.
\label{lem: bas-lam-psi}
\end{lemma}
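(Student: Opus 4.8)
The plan is to build the homeomorphism $h \colon K(\Psi(F)) \to K(F)/{\sim}$ (or its inverse) by exploiting the external-ray structure on both sides together with the combinatorial dictionary established in Lemmas~\ref{lem: land two cycle} and~\ref{lem: land basin of a}. First I would recall that, because $F \in \E_1$, every non-point component of $K(F)$ is a quasidisk bounded by a single internal ray system, and every non-point component of $K(\Psi(F))$ is a quasiconformal copy of the basilica Julia set (Section~\ref{sec:bas}); moreover the Böttcher coordinate $\bott$ conjugates $F$ and $\Psi(F)$ to $z \mapsto z^3$ on the basins of infinity, and by construction of $\Psi$ the two maps have \emph{the same} set of external angles hitting the cocritical point, hence the same partition $U_0 \sqcup U_1$ of the plane. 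This gives a canonical bijection between external rays of $F$ and external rays of $\Psi(F)$ which respects landing-point identifications on the parts of $K$ that are genuinely Cantor-like (the ``exterior'' combinatorics). The content of the lemma is entirely about what happens \emph{on} the bounded components.

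Next I would set up the correspondence component-by-component. Using Lemma~\ref{lem: bas-lam-psi}'s hypothesis setup together with the fixed-point analysis: the main component of $K(\Psi(F))$ is a copy of the basilica with a distinguished fixed point $\alpha$ (Lemma~\ref{lem: ! fp alpha}), and Lemma~\ref{lem: land two cycle} shows that the period-two rays landing at $\alpha$ correspond precisely to a genuine period-two \emph{cycle} of distinct points on $\partial\widehat{\A}_a \subset K(F)$. More generally, every preperiodic biaccessible point of a basilica copy (a point where $\geq 2$ external rays land inside the copy, i.e.\ where a basilica pair lands) pulls back, via the Böttcher-angle bijection, to a configuration of external rays of $F$ that — by the same Tessellation/orbit-portrait argument as in Lemma~\ref{lem: four rays land} — land at a set of distinct points on the boundary of the corresponding disk component of $K(F)$. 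The recipe ``identify points of $\partial(\text{disk})$ along the basilica lamination'' is exactly the inverse of this: collapsing each basilica copy by its lamination yields a disk, and the quotient map on $\partial$ is prescribed by which external angles co-land. So the map $h$ is: on the basin of infinity, transport by $\bott_{\Psi(F)}^{-1} \circ \bott_F$; on each bounded component, use the component-wise quasiconformal conjugacy to the model basilica composed with the lamination quotient to the model disk, then transport back; these agree on the shared boundary external rays.

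The verification then splits into: (1) $h$ is well-defined — the exterior piece and each interior piece agree along their common boundary, which holds because both are pinned down by the same external angles landing at the same (quotient) points; (2) $h$ is a continuous bijection — continuity at boundary points of components is the delicate part, handled by the standard fact that external rays of a polynomial with connected basin at infinity land continuously and by local connectivity of the quasidisk/basilica boundaries; (3) $h^{-1}$ is continuous — which, since $K(\Psi(F))$ is compact Hausdorff and $K(F)/{\sim}$ is Hausdorff (the lamination identifications are closed), follows automatically once (2) is known. To organize (2) cleanly I would invoke the standard criterion that a lamination-induced quotient of a connected filled Julia set, when the lamination is the ``real'' lamination of a locally connected Julia set, is homeomorphic to that Julia set (Thurston \cite{GDR}), applied component-wise, and then glue: the global statement is that $K(\Psi(F))$ carries a lamination which is obtained from the lamination of $K(F)$ (whose bounded components are disks, i.e.\ gaps with trivial interior lamination) by inserting a scaled copy of the basilica lamination into each such gap, the insertion being dictated precisely by Lemmas~\ref{lem: land basin of a} and~\ref{lem: land two cycle} and the preperiodic refinements thereof.

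\textbf{Main obstacle.} The real work is not the global gluing but establishing that the \emph{entire} lamination of a basilica copy — not just the period-two leaves at $\alpha$ handled in Lemma~\ref{lem: land two cycle} — transports correctly to external-ray co-landing for $F$ at \emph{every} preperiodic biaccessible point of every bounded component. This requires extending the ``same orbit portrait on a tessellation face'' argument (Theorem~\ref{thm: BM}) from period two to all periods $q$ and all preperiodic angles simultaneously, i.e.\ showing that the Böttcher-angle identification $t \leftrightarrow t$ intertwines the full ray-equivalence relations of $F$ and $\Psi(F)$ restricted to $U_0$ and its iterated preimages within a given component. I expect this to reduce, by the usual spreading/pullback argument along the dynamics, to the single observation already proved — that a basilica pair landing inside the main component corresponds to distinct co-landing points on $\partial\widehat{\A}_a$ for $F$ — together with the fact that $F$ and $\Psi(F)$ are conjugate on their basins of infinity and share the cocritical angles; but making the pullback bookkeeping precise (tracking which preimage component of $-a$ one passes through, exactly as in the proof of Lemma~\ref{lem: pertwoland}) is where the care is needed.
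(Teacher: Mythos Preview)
Your proposal is correct and follows essentially the same route as the paper: start from the period-two rays landing at $\alpha$ (via Lemma~\ref{lem: land two cycle}), identify these with the internal arguments $\tfrac{1}{3},\tfrac{2}{3}$ on the main disk component of $K(F)$, then pull back inductively to recover the full basilica lamination on every disk component, with the hybrid equivalence to the basilica guaranteeing that no further identifications appear. The paper's proof is in fact considerably terser than your outline---it carries out the pullback you flag as the ``main obstacle'' in two sentences and omits entirely the topological verification (well-definedness, continuity, compactness) that you carefully itemize---so your version is if anything more complete than the original.
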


\begin{proof}
Consider a pair of period two rays $r_e(t)$, $r_e(\tilde{t})$ in the same orbit landing together at the point $\alpha \in K(\Psi(F))$. By Lemma \ref{lem: land two cycle}, the corresponding external rays in $F$ of arguments $t$ and $\tilde{t}$ must land at a two cycle on the boundary of the main disc component of $K(F)$. This two cycle corresponds to the landing points of the internal rays of angles $\frac{1}{3}$ and $\frac{2}{3}$ in the main disc component of $K(F)$. Now observe that there are preimages of $r_e(t)$ and $r_e(\tilde{t})$ which land at the unique non-fixed preimage of $\alpha$ which belongs to the main basilica component of $K(\Psi(F))$. The corresponding rays must land at points on the boundary of the main disc component of $F$ whose internal arguments are $\frac{1}{6}$ and $\frac{5}{6}$. Continuing inductively, we see that the rays landing on preimages of $\alpha$ in the main basilica component of $K(\Psi(F))$ correspond to rays landing on the main disc component of $F$ whose internal arguments are given by basilica angles. This process corresponds exactly with that of applying the basilica lamination in the main disc component of $K(F)$.

The preimage components are tackled in the same way, by considering the preimages of the main disc component. Since each non-trivial component of the filled Julia set is hybrid equivalent to the basilica, it follows that there are no more identifications.\end{proof}

We now claim that the period $q$ orbit portraits for the map $F \in \Esc_1$ and $\Psi(F) \in \Esc_2^B$ are the same, save for the well-understood case when $q=2$. Given a parabolic parameter $\mathfrak{p}$, if the orbit relation $\phi \sim \psi$ is true for all maps in a neighborhood of $\mathfrak{p}$, then we call this relation a background orbit relation. The following result can be obtained from \cite{ICP2}, but we give a proof using the above results here.

\begin{lemma}
 Let $q \in \N$ and suppose that $F$ lies in a face of $\Tess_q(\overline{\Sone})$. Then the period $q$ orbit portrait for $F$ is contained in the period $q$ orbit portrait of $\Psi(F)$. In particular, the orbit portraits are equal if $q \neq 2$.
 \label{lem:same_orb_ports}
\end{lemma}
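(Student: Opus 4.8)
The plan is to leverage Lemma~\ref{lem: bas-lam-psi}, which already identifies $K(\Psi(F))$ with the quotient of $K(F)$ by the basilica lamination applied inside each disc component. The key observation is that this quotient map is \emph{monotone} (point preimages are connected) and collapses only arcs inside the closures of the disc components; in particular it does not identify any two distinct points of $K(F)$ that lie on distinct non-disc pieces or that correspond to distinct external rays landing outside a disc component. So I would first set up the correspondence between the external angles of $F$ and those of $\Psi(F)$: since $F$ and $\Psi(F)$ lie on parameter rays of the same argument $t$, the Böttcher coordinates at $\infty$ are conjugate, and there is a canonical angle-preserving correspondence of external rays that respects the dynamics (tripling). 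Under this correspondence, $\phi \sim_F \psi$ (the rays $r_e^F(\phi)$, $r_e^F(\psi)$ co-land) should imply $\phi \sim_{\Psi(F)} \psi$, because the quotient map of Lemma~\ref{lem: bas-lam-psi} only adds identifications, never removes them — two rays landing together for $F$ still land together (possibly at a collapsed point) for $\Psi(F)$.

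The main step is to prove the reverse inclusion when $q \neq 2$: if $r_e^{\Psi(F)}(\phi)$ and $r_e^{\Psi(F)}(\psi)$ co-land at a period $q$ point $w \in J(\Psi(F))$ with $q \neq 2$, then $r_e^F(\phi)$ and $r_e^F(\psi)$ already co-land for $F$. Here I would argue that a period $q$ landing point $w$ of $\Psi(F)$ cannot lie in the interior of (the closure of) any basilica component of $K(\Psi(F))$ for $q \neq 2$: the only periodic points on the boundary of a basilica copy that are landing points of \emph{two or more} rays are the $\alpha$-type fixed points of that copy and their iterated preimages, and these are precisely the points that carry period $2$ behaviour in the relevant sense — more carefully, the biaccessible periodic points on a basilica copy arise from the basilica lamination, whose only periodic leaf is the period-two leaf $\{1/3, 2/3\}$ and its preimages, so the associated external angles are period two (or pre-period-two but with the co-landing structure governed by a period two cycle). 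Consequently, for $q \neq 2$ a period $q$ co-landing of external rays of $\Psi(F)$ must occur at a point coming from a point of $K(F)$ that is \emph{not} collapsed, i.e., the monotone quotient is injective near $w$, so $r_e^F(\phi)$ and $r_e^F(\psi)$ co-land for $F$ as well. Combining the two inclusions gives equality of the period $q$ orbit portraits for $q \neq 2$, and the plain inclusion for $q = 2$.

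I expect the main obstacle to be the careful bookkeeping of \emph{which} external angles are affected by the collapsing, and making rigorous the claim that a period $q$ ($q\ne2$) periodic landing point of $\Psi(F)$ never lies on the boundary of a basilica component at a biaccessible point. This requires combining: (i) the lamination description from Lemma~\ref{lem: bas-lam-psi}, (ii) the classification of biaccessible points of the basilica from Lemma~\ref{lem: ! bas pairs} / Definition~\ref{def: bas pair} (every basilica pair $\{s,\tilde s\}$ has $2^n s \equiv 1/3$ for some $n$, hence the underlying periodic orbit governing the identification has period $2$ under doubling), and (iii) a translation of this "doubling period $2$" statement into "tripling period $2$" for the cubic external angles via the semiconjugacy coming from the hybrid equivalence of each component with $z \mapsto z^2-1$. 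One should also handle the background orbit relations at parabolic parameters so that the statement "the period $q$ orbit portrait" is unambiguous across the face; since Theorem~\ref{thm: BM} already guarantees the portrait is constant on each face, this is mostly a matter of phrasing. Once these pieces are in place, the equality for $q\ne 2$ follows formally, so I would keep the bulk of the write-up focused on step (ii)--(iii), the angle-arithmetic lemma that "collapsed rays have period-two-governed angles."
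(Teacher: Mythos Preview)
Your approach is essentially the same as the paper's: both use Lemma~\ref{lem: bas-lam-psi} to get the inclusion and then argue that the only new co-landings created by the basilica collapse occur at $\alpha$ and its preimages, of which $\alpha$ is the unique periodic one and carries period~$2$ rays. The paper's proof is a two-sentence version of what you outline; your step (iii) (translating ``doubling period $2$'' to ``tripling period $2$'') is unnecessary here, since Lemmas~\ref{lem: no.raysalpha}--\ref{lem: four rays land} have already established directly that the external rays landing at $\alpha$ for $\Psi(F)$ have period~$2$ under tripling.
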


\begin{proof}
 The construction given in Lemma~\ref{lem: bas-lam-psi} can only make the portrait larger so the orbit portrait of $F$ must be contained in the orbit portrait of $\Psi(F)$. Indeed, the only periodic point which is the landing point of more than one ray produced by the process is $\alpha$, which is the landing point of (at least) two rays of period $2$. So if $q \neq 2$, the orbit portraits must be the same.
\end{proof}

\subsection{$\widehat{\Psi}$ is well-defined}

We will prove the following result.

\begin{theorem}
 Let $\theta$ and $\theta'$ be rational angles. If $R_e(\theta)$ and $R_e(\theta')$ land together on $\partial \Esc_1$, then they land together on $\partial \Esc_2^B$. 
 \label{thm:psiwelldef}
\end{theorem}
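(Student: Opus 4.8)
The plan is to translate the statement about parameter rays landing together into a statement about dynamical rays, using the structure of the tessellation $\Tess_q$ and the combinatorial model developed above. First I would recall the standard fact (from \cite{CP3}) that a parameter ray $R_e(\theta)$ of rational angle $\theta$ in $\Esc_1$ lands at a point on $\partial \Esc_1$, and that two such rays $R_e(\theta)$, $R_e(\theta')$ land together precisely when $\theta$ and $\theta'$ are co-periodic (co-period $q$, say) and, for maps $F$ in the two faces of $\Tess_q(\overline{\Sone})$ adjacent to the common landing point, the dynamical rays $r_e^F(\theta\pm\tfrac13)$ and $r_e^F(\theta'\pm\tfrac13)$ — i.e.\ the period-$q$ dynamical rays that collide with the free critical point $-a$ on these parameter rays — are forced to coincide combinatorially at the parabolic landing point. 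In other words, landing together on $\partial\Esc_1$ is detected by the period-$q$ orbit portrait of the adjacent faces together with the fact that the parabolic parameter identifies the relevant periodic point. So the first step is to give this characterization cleanly: $R_e(\theta)$ and $R_e(\theta')$ land together on $\partial\Esc_1$ iff $\theta,\theta'$ are co-periodic of the same co-period $q$ and the co-critical dynamical rays associated to $\theta$ and $\theta'$ land at the same periodic point for maps in the faces of $\Tess_q(\overline{\Sone})$ on the two sides of that parabolic vertex.

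Next I would invoke Lemma~\ref{lem:same_orb_ports}: for $F$ in a face of $\Tess_q(\overline{\Sone})$ and $\Psi(F)$ the corresponding map in $\Esc_2^B$, the period-$q$ orbit portrait of $F$ is contained in that of $\Psi(F)$, with equality when $q\neq 2$. Since $\Psi$ maps the parameter ray $R_e(\theta)$ in $\Esc_1$ to the parameter ray $R_e(\theta)$ in $\Esc_2^B$ (same argument), and since the combinatorial co-periodicity of $\theta$ is an intrinsic property of the angle, the faces of $\Tess_q(\overline{\Sone})$ adjacent to the landing point of $R_e(\theta)$ correspond under $\Psi$ to faces of $\Tess_q(\overline{\Stwo})$ adjacent to the landing point of the image parameter ray. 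Thus if the co-critical dynamical rays for $\theta$ and $\theta'$ land together for maps in the adjacent $\Sone$-faces, the orbit-portrait containment guarantees they still land together for the corresponding maps in $\Esc_2^B$ — the identification can only persist or grow, never disappear. This shows the landing relation on $\partial\Esc_1$ is carried into the landing relation on $\partial\Esc_2^B$, which is exactly the claim. The case $q=2$ needs separate, explicit handling: here I would use the $W$-decomposition (Definition~\ref{def: setW}) and Lemmas~\ref{lem: four rays land}, \ref{lem: land two cycle}, \ref{lem: bas-lam-psi}, which pin down exactly which period-two rays land at $\alpha$ and show the extra identifications are precisely those of the basilica lamination — in particular, the collision data for co-period-two parameter rays still matches up between $\Esc_1$ and $\Esc_2^B$.

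The step I expect to be the main obstacle is making rigorous the bridge between the \emph{parameter} statement (rays landing together on $\partial\Esc_1$) and the \emph{dynamical} statement (orbit portraits of adjacent faces). One has to argue that the landing point of $R_e(\theta)$ is genuinely a vertex of $\Tess_q(\overline{\Sone})$ — a parabolic parameter — and that "two parameter rays land together" is equivalent to the combinatorial condition on the co-critical rays in the adjacent faces; this requires care about which faces are adjacent and about the behavior of dynamical rays as the parameter approaches the parabolic vertex (semicontinuity of landing, stability of repelling periodic landing points across a face, and the precise way the co-critical ray degenerates at the boundary). Once that dictionary is in place, the monotonicity of orbit portraits under $\Psi$ (Lemma~\ref{lem:same_orb_ports}) does the real work and the conclusion follows. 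I would organize the proof as: (1) establish the parameter-to-dynamics dictionary for co-periodic rays; (2) apply $\Psi$-equivariance of parameter rays plus Lemma~\ref{lem:same_orb_ports} for $q\neq 2$; (3) dispatch $q=2$ via the $W$-decomposition and the basilica lamination lemmas.
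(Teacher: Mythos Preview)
Your proposal has a genuine gap: you assume at the outset that two rational parameter rays landing together on $\partial\Esc_1$ must be \emph{co-periodic} of some common co-period $q$, and you build the entire argument around that. This is false. Rational angles that are not co-periodic (so strictly preperiodic in the relevant sense) give parameter rays that land at critically finite non-hyperbolic parameters, and several such rays can land at the same point. The paper treats this case separately (Lemma~\ref{Lem:critfin}): for the landing map $F_1$, the dynamical rays $r_e^{F_1}(\theta)$ and $r_e^{F_1}(\theta')$ land at the cocritical point $2a_{F_1}$, which eventually maps to a repelling cycle of some period $n$; the resulting orbit relation is then transported to $\Esc_2^B$ via Corollary~\ref{cor:samefaces} and Lemma~\ref{lem:same_orb_ports}, and one reads off that the corresponding parameter rays in $\Esc_2^B$ land together. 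Your outline never touches this mechanism, so as written it does not cover the full statement.

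For the co-periodic case your sketch is in the right spirit but underestimates what is needed. Knowing that the period-$q$ orbit portraits agree on corresponding faces (Lemma~\ref{lem:same_orb_ports}) is not by itself enough to conclude that two co-periodic parameter rays land at the \emph{same} parabolic vertex in $\Esc_2^B$; one has to rule out the possibility that $R_e(\theta)$ pairs with some other $R_e(\alpha)$ and $R_e(\theta')$ with some $R_e(\beta)$. The paper does this by a contradiction argument that runs through the possible circular orderings of $\theta,\theta',\alpha,\beta$ and invokes the Monotonicity Conjecture (stated just before Lemma~\ref{Lem:coperiod}) to compare how orbit portraits grow or shrink as one crosses primary rays. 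Your step~(2) elides this, and in particular never mentions the Monotonicity Conjecture, which the paper explicitly assumes.
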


The proof makes use of the results of Branner-Hubbard \cite{ICP2} and the (unpublished) work of the first author and Milnor \cite{CP3}. Indeed, the present authors have been informed that this result can be obtained directly from \cite{ICP2}, but we could not find the explicit statement we needed in the literature. For completeness we prove the result here.

First we prove the statement above when $\theta$ and $\theta'$ are coperiodic. As a consequence, this will allow us to conclude that the tessellations $\Tess_q$ in $\Esc_1$ and $\Esc_2^B$ are essentially the same. By \cite{CP3}, we know that coperiodic rays in $\Esc_1$ and $\Esc_2^B$ always have at least one partner ray. Indeed, in $\Sone$, all coperiodic rays land in pairs. In $\Stwo$, the only exception is when there are four rays landing on the parabolic points that lie on the intersection of the boundary of the two escape regions. We will call a coperiod $q$ parameter ray a primary ray if, of all the parameter rays landing at the parabolic map $\mathfrak{p}$, it is one of the two rays which are closest to the hyperbolic component $H_\mathfrak{p}$ which has $\mathfrak{p}$ as its root point. All other rays landing at $\mathfrak{p}$ will be called secondary rays. Note that all coperiodic rays in $\Esc_1$ and $\Esc_2^B$ are primary rays.

Following \cite{CP3} (compare \cite{POER} for the quadratic case), we define a wake $W~\subset~S_p$ to be a simply connected subset of $\mathcal{S}_p$ which 
\begin{enumerate}
 \item is bounded by a a pair of parameter rays which are in the same escape region and which land at a common parabolic point $\mathfrak{p}$.
\item contains a hyperbolic component of Type $D$ which has $\mathfrak{p}$ as a boundary point.
\end{enumerate}
In \cite{CP3}, the authors make the following conjecture.

\begin{Conj}[Monotonicity Conjecture] As we cross any primary ray of co-period $q$ into the face which contains $H_\mathfrak{p}$, the period $q$ orbit portrait is replaced by a strictly larger orbit portrait.
\end{Conj}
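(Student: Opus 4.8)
The plan is to reduce the monotonicity statement to a local analysis of the parabolic bifurcation at the root point $\mathfrak{p}$ of $H_\mathfrak{p}$, using the stability of ray landing supplied by Theorem~\ref{thm: BM}. Let $\mathcal{F}_-$ be the face from which we cross and $\mathcal{F}_+$ the face containing $H_\mathfrak{p}$; these share the primary ray as a common edge. By Theorem~\ref{thm: BM} the period~$q$ orbit portrait is constant on each face, equal to $P_-$ on $\mathcal{F}_-$ and $P_+$ on $\mathcal{F}_+$, so the conjecture becomes the purely combinatorial assertion $P_- \subsetneq P_+$, to be read off from the single parabolic parameter $\mathfrak{p}$ on the common edge.

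The first step is to identify $P_+$ with the \emph{parabolic} orbit portrait $P_\mathfrak{p}$ determined at $\mathfrak{p}$. At $\mathfrak{p}$ the map carries a parabolic periodic orbit on which a definite collection of period~$q$ rays co-lands; by the definition of the wake used here, $H_\mathfrak{p}$ is the type~$D$ component attached at $\mathfrak{p}$, and throughout $\mathcal{F}_+$ this orbit persists as a repelling orbit with its landing relations intact, so that $P_\mathfrak{p} \subseteq P_+$. Comparing with the approach from $\mathcal{F}_-$ is where strictness enters: on the primary edge one of the period~$q$ dynamical rays crashes into the free critical point $-a$, and just past the edge into $\mathcal{F}_-$ the rays that shared this landing point at $\mathfrak{p}$ are separated, coming to land at distinct periodic points. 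This is precisely the loss of the characteristic co-landing that defines the wake, and it yields a relation present in $P_+$ but absent from $P_-$.

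It remains to establish the containment $P_- \subseteq P_+$, i.e.\ that the crossing is purely additive and no co-landing relation of $\mathcal{F}_-$ is destroyed. Here I would argue that any pair of period~$q$ rays co-landing throughout $\mathcal{F}_-$ lands at a repelling periodic point which survives into $\mathcal{F}_+$, so that the relation is inherited; the combinatorial model for this control is Lemma~\ref{lem:same_orb_ports}, where the only new identifications arising in passing between the escape regions are shown to occur at $\alpha$ in period~$2$. In the co-period~$2$ case the whole statement is already verified by Lemmas~\ref{lem: no.raysalpha} and~\ref{lem: four rays land}: outside $W$ exactly one pair of period~$2$ rays lands at $\alpha$, while inside $W$, which contains the relevant type~$D$ components, two pairs do, realizing precisely the strict jump predicted by the conjecture.

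I expect the decisive obstacle to be this additivity claim in full generality. The parabolic analysis controls only the rays landing near $\mathfrak{p}$, whereas monotonicity is a global statement about every period~$q$ ray; excluding a compensating loss of some distant co-landing relation requires a global combinatorial model of the dynamical plane, for example through the Branner--Hubbard puzzle pieces of \cite{ICP2}. Supplying such a model uniformly in $q$ is exactly what keeps the statement at the level of a conjecture, rather than an immediate consequence of the local bifurcation picture.
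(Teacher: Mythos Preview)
The statement you are attempting to prove is not proved in the paper at all: it is explicitly labeled a \emph{Conjecture} (attributed to \cite{CP3}), and the paper immediately follows it with the sentence ``We will assume this conjecture in proving Theorem~\ref{thm:psiwelldef}.'' There is therefore no paper proof to compare your proposal against; the authors treat the Monotonicity Conjecture as an unproven hypothesis on which their subsequent arguments rest.

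Your proposal is best read as a heuristic outline of why the conjecture is plausible and where the difficulty lies, and in that respect it is reasonable. The identification $P_+ = P_\mathfrak{p}$ via persistence of the parabolic orbit as repelling inside the wake, and the loss of the characteristic co-landing on the $\mathcal{F}_-$ side, are the right local ingredients. However, your own final paragraph correctly isolates the genuine gap: the containment $P_- \subseteq P_+$ is a \emph{global} statement about all period~$q$ rays, not just those near the parabolic orbit, and your appeal to Lemma~\ref{lem:same_orb_ports} does not supply this. That lemma compares orbit portraits of $F \in \Esc_1$ and $\Psi(F) \in \Esc_2^B$ at the \emph{same} parameter angle, not portraits on adjacent faces of a single $\mathcal{S}_p$; it gives no control over what happens to unrelated period~$q$ co-landings as one crosses a primary ray. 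The co-period~$2$ verification via Lemmas~\ref{lem: no.raysalpha} and~\ref{lem: four rays land} is genuine but ad hoc, and does not generalize. So what you have written is an honest sketch that ends exactly where the conjecture begins, which is consistent with the paper's decision to assume rather than prove it.
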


We will assume this conjecture in proving Theorem~\ref{thm:psiwelldef}.

\begin{lemma}
Let $\theta$ and $\theta'$ be coperiodic angles. Then $R_e(\theta)$ and $R_e(\theta')$ land together on $\partial \Esc_1$ if and only if they land together on $\partial \Esc_2^B$.
\label{Lem:coperiod}
\end{lemma}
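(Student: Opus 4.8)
The plan is to leverage the structural correspondence between the tessellations $\Tess_q(\overline{\Esc_1})$ and $\Tess_q(\overline{\Esc_2^B})$ together with the combinatorial description of $\Psi$ furnished by Lemma~\ref{lem: bas-lam-psi} and Lemma~\ref{lem:same_orb_ports}. Since coperiodic rays land at parabolic parameters, the statement is really a statement about when two parabolic parameters in the two escape regions coincide, which is controlled by their orbit portraits. First I would fix the coperiod $q$ of $\theta$ and $\theta'$, and recall that since $\Esc_1$ and $\Esc_2^B$ are both multiplicity-one escape regions, the conformal isomorphisms $\Phi_1,\Phi_2$ identify the coperiod $q$ parameter rays in each with the same collection of radial lines in $\C\setminus\overline{\D}$. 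So $R_e(\theta)$ and $R_e(\theta')$ are rays ``of the same name'' in both pictures, and the only question is whether they land at a common point.

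The heart of the argument should be the following: a pair of coperiodic parameter rays $R_e(\theta), R_e(\theta')$ land at a common parabolic point $\mathfrak{p}$ precisely when they are the two \emph{primary} rays bounding a wake $W_{\mathfrak p}$, and crossing either one of them changes the period $q$ orbit portrait in a prescribed way (here I would invoke the Monotonicity Conjecture, as the paper licenses us to do). Thus the landing pattern of coperiodic rays is determined entirely by the period $q$ orbit portraits of the faces of $\Tess_q$ adjacent to each ray. For $q \neq 2$, Lemma~\ref{lem:same_orb_ports} tells us the orbit portrait of any $F$ in a face of $\Tess_q(\overline{\Esc_1})$ equals that of $\Psi(F)$. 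Therefore the orbit portrait data attached to the faces on either side of $R_e(\theta)$ in $\Esc_1$ matches that attached to the corresponding faces in $\Esc_2^B$, so $R_e(\theta)$ and $R_e(\theta')$ bound a wake in $\Esc_1$ if and only if their images bound a wake in $\Esc_2^B$ — i.e.\ they land together in one escape region iff they do in the other. For the excluded case $q=2$, I would treat it by hand using the explicit decomposition $W = W_1 \cup W_2 \cup W_3 \cup W_4$ from Definition~\ref{def: setW}: the period two coperiodic rays in $\Esc_1$ land in pairs (by \cite{CP3}), and in $\Esc_2^B$ the only discrepancy is the four-ray landings at the parabolic points on $\partial\Esc_1 \cap \partial\Esc_2^B$; but those extra rays are \emph{secondary} rays, while the coperiodic rays in both regions are primary (as noted in the text before the lemma), so the pairing of primary coperiod two rays is again the same on both sides.

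Concretely the steps are: (1) reduce to showing the two rays bound a common wake in one region iff in the other; (2) characterize wakes via the jump in the period $q$ orbit portrait as one crosses a primary ray, using Monotonicity; (3) for $q\neq 2$, transport the orbit portrait equality of Lemma~\ref{lem:same_orb_ports} through $\Psi$ to conclude the wake structures correspond; (4) handle $q=2$ separately, observing that the extra four-ray landings involve only secondary rays and so do not affect the primary (hence coperiodic) pairing. The main obstacle I anticipate is step (2)–(3): making precise that ``the orbit portrait determines which primary rays are co-landing'' — one needs that two primary coperiod $q$ rays $R_e(\theta),R_e(\theta')$ land at the same parabolic point iff the orbit portrait picked up on crossing $R_e(\theta)$ from the escape side is the same as that picked up on crossing $R_e(\theta')$, and that this common portrait together with Monotonicity pins down $\mathfrak p$. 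This is a combinatorial rigidity statement about the tessellation that must be extracted carefully from \cite{CP3}; once it is in hand, the transport across $\Psi$ via Lemma~\ref{lem:same_orb_ports} is formal, and the $q=2$ bookkeeping is routine given Lemmas~\ref{lem: pertwoland} and~\ref{lem: no.raysalpha}.
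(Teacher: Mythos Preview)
Your approach is essentially the same as the paper's: both arguments rest on transporting period $q$ orbit portraits across $\Psi$ via Lemma~\ref{lem:same_orb_ports}, invoking the Monotonicity Conjecture to tie the wake structure to jumps in the orbit portrait, and handling $q=2$ separately by direct inspection.

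The gap you correctly flag in step~(2)--(3) --- that ``the orbit portrait data on the adjacent faces determines which primary rays co-land'' --- is exactly where the paper does its real work, and it does not try to extract a clean rigidity statement from \cite{CP3}. Instead it argues by contradiction: assume $R_e(\theta)$ and $R_e(\theta')$ land together at $\mathfrak{p}$ in $\partial\Esc_1$ but not in $\partial\Esc_2^B$, say $R_e(\theta)$ lands with $R_e(\alpha)$ and $R_e(\theta')$ with $R_e(\beta)$ there, and then runs through the finitely many possible cyclic orderings of $\theta,\theta',\alpha,\beta$. In each case, Monotonicity forces the orbit portrait to grow or shrink on crossing some ray in $\Esc_2^B$ in a way that is incompatible with what Lemma~\ref{lem:same_orb_ports} says must happen for the corresponding map in $\Esc_1$. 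This concrete case analysis is what replaces your hoped-for abstract rigidity lemma; you should expect to do something equivalent rather than locate a ready-made statement in \cite{CP3}.

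One small correction on the $q=2$ bookkeeping: the parabolic points where four rays land are on the common boundary of the two escape regions \emph{of $\Stwo$} (that is, $\partial\Esc_2^B \cap \partial\Esc_2^D$), not ``$\partial\Esc_1 \cap \partial\Esc_2^B$'', which would not make sense since those sit in different curves. The extra two rays there come from $\Esc_2^D$, so they are irrelevant to the pairing of rays within $\Esc_2^B$; the paper simply says the coperiod~$2$ case ``holds by inspection''.
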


\begin{proof}
One may easily verify by inspection that the statement holds for coperiod $2$ rays. We prove that if $R_e(\theta)$ and $R_{e}(\theta')$ land together on $\partial \Esc_1$ then they land together on $\partial \Esc_2^B$. The proof of the converse is similar.

Suppose the rays $R_e(\theta)$ and $R_e(\theta')$ of coperiodic angles $\theta$ and $\theta'$ of coperiod $q \neq 2$ land at a common parabolic point $\mathfrak{p}$.  Then these rays, along with $\mathfrak{p}$, form a wake $W$, where maps in $\Esc_1 \cap W$ have parameter angle $\theta < t < \theta'$. All maps $F$ inside this wake have a period $q$ orbit portrait which contains the period $q$ orbit portrait of the parabolic map $\mathfrak{p}$. In particular, this holds for $F \in \Esc_1 \cap W$. By Lemma~\ref{lem:same_orb_ports}, since $q \neq 2$, we see that the map $F' \in \Esc_2^B$ has the same period $q$ orbit portrait as $F$. Furthermore, since $W$ must be a primary wake, then if we cross the parameter ray of angle $\theta$ transversally to enter $W$, then the period $q$ orbit portrait will get strictly larger. The same holds if we cross into $W$ by passing through the parameter ray of angle $\theta'$.

To obtain a contradiction, we assume that $R_e(\theta)$ and $R_e(\theta')$ do not land together on $\partial \Esc_2^B$. Suppose instead that $R_e(\theta)$ lands with $R_e(\alpha)$ and $R_e(\theta')$ lands with $R_e(\beta)$. We split into cases depending on the relative circular ordering of $\theta$, $\theta'$, $\alpha$ and $\beta$.

\begin{itemize}
\item $\alpha < \theta < \theta' < \beta$ or $\alpha < \theta < \beta < \theta'$. The rays $R_e(\alpha)$ and $R_e(\theta)$  bound a wake $W'$. But this means that if we cross $R_e(\theta)$ by increasing the angle, we will leave $W'$ and so the orbit portrait will get strictly smaller. However, the same action in $\Esc_1$ sees the orbit portrait grow strictly larger. This is a contradiction, since by Lemma~\ref{lem:same_orb_ports} the period $q$ orbit portraits of $F \in \Esc_1$ and $\Psi(F) \in \Esc_2^B$ should be the same.
\item $\theta < \alpha < \beta < \theta'$ or $\theta < \alpha < \theta' < \beta$. The rays $R_e(\theta)$ and $R_e(\alpha)$ bound a primary wake $W'$ which has a root point $\mathfrak{p}'$ which has the same period $q$ orbit portrait as that of $\mathfrak{p}$. Thus, as we cross the ray $R_e(\alpha)$ to leave $W'$, the orbit portrait will get strictly smaller than that of $\mathfrak{p}'$, and thus it is strictly smaller than that of $\mathfrak{p}$. But if we consider a map $F' \in \Esc_2^B$ obtained after crossing the ray $R_e(\alpha)$, the map $\Psi^{-1} \in \Esc_1$ has parameter angle $t$ that lies between $\theta$ and $\theta'$, and so has an orbit portrait which contains the orbit portrait of $\mathfrak{p}$. This is a contradiction.
\item $\theta < \theta' < \beta < \alpha$ or $\theta < \beta < \theta' < \alpha$.  In this case the wake $\widetilde{W}$ bounded by $R_e(\theta')$ and $R_e(\beta)$ is a subwake of the wake $W'$ bounded by $R_e(\theta)$ and $R_e(\alpha)$. This means that as we cross $R_e(\theta')$ and enter $\widetilde{W}$, we will obtain a orbit portrait strictly larger than that which we obtain if we cross $R_e(\theta)$ to enter $W'$. But since the two rays land together in $\Sone$, the two orbit portraits must be the same. Hence we have a contradiction.
\end{itemize}

Thus all possibilities for when $R_e(\theta)$ and $R_e(\theta')$ don't land together on $\partial \Esc_2^B$ lead to a contradiction. Hence $R_e\theta$ and $R_e(\theta')$ must land together on $\partial \Esc_2^B$.
\end{proof}

The above result implies there is a bijective correspondence between the faces of the tessellation $\Tess_q(\overline{\Sone})$ which intersect $\Esc_1$ and the faces of the tessellation $\Tess_q(\overline{\Stwo})$ which intersect $\Esc_2^B$.

\begin{cor}
 The parameter rays of angle $\theta$ and $\theta'$ belong to the same component of $\Tess^{(2)}_q(\overline{\Sone}) \cap \Esc_1$ if and only if they belong to the same component of $\Tess^{(2)}_q(\overline{\Stwo}) \cap \Esc_2^B$.
 \label{cor:samefaces}
\end{cor}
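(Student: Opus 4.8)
The plan is to reduce the statement to Lemma~\ref{Lem:coperiod}, via the observation that a face of $\Tess_q$ lying inside an escape region is cut out entirely by the coperiod~$q$ rays of that region, whose combinatorics Lemma~\ref{Lem:coperiod} pins down identically in $\Esc_1$ and in $\Esc_2^B$.

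First I would record the local structure of $\Tess_q$ inside a multiplicity-one escape region $\Esc$. In the coordinate $\Phi_{\Esc}\colon\Esc\to\C\setminus\overline{\D}$ every parameter ray $R_e(s)$ is radial, and the coperiod~$q$ rays are those with $s$ coperiodic; by \cite{CP3} they occur in pairs $\{R_e(\alpha),R_e(\beta)\}$ landing at a common parabolic point on $\partial\Esc$. Adjoining the single ideal point $e_\infty$ of $\Esc$, each such pair together with its parabolic landing point becomes a Jordan curve that separates $\Esc$ into two pieces, the piece on a given side being exactly $\{R_e(s):s\text{ in the corresponding arc between }\alpha,\beta\}$ (clear because near $e_\infty$ the rays are radial). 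Since there are finitely many such rays, since they are pairwise unlinked (radial, hence disjoint away from $e_\infty$ and the parabolic points), and since $\Esc$ contains no vertex of $\Tess_q$ in its interior, deleting all coperiod~$q$ rays from $\Esc$ yields precisely the common refinement of these two-part partitions. Thus the components of $\Tess^{(2)}_q(\overline{\Esc})\cap\Esc$ are in bijection with the nonempty ``sectors'' obtained by choosing, for each coperiodic pair landing together on $\partial\Esc$, one of its two sides.

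For a rational angle $\theta$ that is not coperiodic of coperiod~$q$, the ray $R_e(\theta)$ crosses no edge of $\Tess_q$, so it lies in a single such sector---the one on the side of each pair $\{\alpha,\beta\}$ selected by the arc containing $\theta$. Consequently $R_e(\theta)$ and $R_e(\theta')$ lie in the same component of $\Tess^{(2)}_q(\overline{\Esc})\cap\Esc$ if and only if, for every coperiodic pair $\{\alpha,\beta\}$ whose rays land together on $\partial\Esc$, the angles $\theta$ and $\theta'$ lie in the same one of the two arcs of $\R/\Z$ cut out by $\{\alpha,\beta\}$. Now Lemma~\ref{Lem:coperiod} says this family of pairs is exactly the same for $\partial\Esc_1$ and for $\partial\Esc_2^B$ (coperiodicity of an angle does not depend on the escape region), so the displayed separation condition is literally the same in the two escape regions, and the corollary follows. (Equivalently, $\Psi=\Phi_2^{-1}\circ\Phi_1$ is a homeomorphism $\Esc_1\to\Esc_2^B$ carrying the ray $R_e(t)$ of $\Esc_1$ onto that of $\Esc_2^B$, hence carrying the union of coperiod~$q$ rays onto its counterpart and inducing the claimed bijection of components outright; Lemma~\ref{Lem:coperiod} is the substantive input behind the preceding ``bijective correspondence of faces''.) The one step needing real care is the planar bookkeeping in the second paragraph---that removing the coperiodic rays produces exactly those sectors---where finiteness, unlinkedness, and the Jordan curve theorem come in; I expect this, and not the final identification, to be the main (essentially routine) obstacle.
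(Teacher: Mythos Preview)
Your argument is correct and is essentially the same as the paper's: both reduce to Lemma~\ref{Lem:coperiod} via the observation that the faces of $\Tess_q$ inside a (multiplicity-one) escape region are exactly the sectors cut out by pairs of coperiod~$q$ rays landing together on its boundary. The paper compresses the bookkeeping into a short contradiction (a single separating pair in $\Esc_2^B$ would, by Lemma~\ref{Lem:coperiod}, separate in $\Esc_1$ as well), whereas you spell out the sector description explicitly; your unnecessary restriction to rational $\theta$ can simply be dropped.
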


\begin{proof}
 Again we prove only one direction, since the converse is very similar. Suppose that $R_e(\theta)$ and $R_e(\theta')$ belong to the same component $\mathcal{F}_k$ of $\Tess^{(2)}_q(\overline{\Sone}) \cap \Esc_1$, but $R_e(\theta)$ and $R_e(\theta')$ do not belong to the same component of  $\Tess^{(2)}_q(\overline{\Stwo}) \cap \Esc_2^B$. Then there exist $\alpha$ and $\beta$, angles of coperiod $q$ such that the parameter rays $R_e(\alpha)$ and $R_e(\beta)$, together with their common landing point, separate the rays $R_e(\theta)$ and $R_e(\theta')$ in $\Esc_2^B$. But then these rays would have to separate the rays $R_e(\theta)$ and $R_e(\theta')$ in $\Sone$, which is impossible since they both belong to the same component $\mathcal{F}_k$.
\end{proof}

By the corollary, we see that the map $\Psi$ induces a natural bijection between the components of $\Tess_q(\overline{\Sone}) \cap \Esc_1$ and the components of $\Tess_q(\overline{\Stwo}) \cap \Esc_2^B$. We again use the notation $\Psi$ for this induced map.

We now turn our attention to the non-coperiodic case. By \cite{CP3}, if $\theta$ is rational but not coperiodic, then the parameter ray lands on a critically finite non-hyperbolic parameter.

\begin{lemma}
Suppose the rays $R_e(\theta)$ and $R_e(\theta')$ land at the same critically finite map on $\partial \Esc_1$. Then rays $R_\theta$ and $R_{\theta'}$ land together on $\partial \Esc_2^B$.
\label{Lem:critfin}
\end{lemma}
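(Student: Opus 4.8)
The plan is to push the combinatorics of the common landing point through the tessellation correspondence already established and then to recover the key identification of external rays by a pull‑back in the dynamical plane. I would begin by recalling the standard characterization of the landing point of a rational parameter ray in a multiplicity‑one escape region (see \cite{CP2}): for $H\in\partial\Esc_1$ or $H\in\partial\Esc_2^B$, the ray $R_e(\vartheta)$ lands at $H$ exactly when the dynamical ray $r_e^H(\vartheta)$ lands at the cocritical point $2a_H$ — equivalently, when $r_e^H(\vartheta+\tfrac13)$ and $r_e^H(\vartheta-\tfrac13)$ both land at the free critical point $-a_H$. Since $\theta$ and $\theta'$ are rational and not coperiodic, each of $R_e(\theta),R_e(\theta')$ lands, by \cite{CP3}, at a critically finite non‑hyperbolic parameter: at the common parameter $F\in\partial\Esc_1$ by hypothesis, and at parameters $G,G'\in\partial\Esc_2^B$ respectively. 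The goal is to show $G=G'$. Note the characterization gives $\lambda_e^F(\theta)=\lambda_e^F(\theta')=2a_F$, while $\lambda_e^G(\theta)=2a_G$ and $\lambda_e^{G'}(\theta')=2a_{G'}$.

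Next I would transfer the faces. A critically finite non‑hyperbolic parameter is neither a parabolic vertex nor an ideal point of $\Tess_q$, and cannot lie on a coperiodic parameter ray, so each of $F,G,G'$ lies in the interior of a unique face of the relevant tessellation for every $q$. Each of $R_e(\theta),R_e(\theta')$ is connected and disjoint from all coperiod‑$q$ rays, hence lies in a single face of $\Tess_q(\overline{\Sone})$; since both accumulate at the interior point $F$ (which lies on the boundary of no face), both lie in that same face, for every $q$. By Corollary~\ref{cor:samefaces} the two rays then lie in a common face of $\Tess_q(\overline{\Stwo})\cap\Esc_2^B$ for every $q$, and running the argument backwards shows that their landing points $G$ and $G'$ lie in a common face of $\Tess_q(\overline{\Stwo})$ for every $q$. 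By Theorem~\ref{thm: BM}, $G$ and $G'$ therefore share the same period‑$q$ orbit portrait for every $q$, and by Lemma~\ref{lem:same_orb_ports} the period‑$q$ orbit portrait of $F$ is contained in this common portrait for every $q$.

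The final step would be to upgrade this to $\lambda_e^G(\theta)=\lambda_e^G(\theta')$. Iterating $f_F$ on $\lambda_e^F(\theta)=\lambda_e^F(\theta')$ gives $\lambda_e^F(3^k\theta)=\lambda_e^F(3^k\theta')$ for all $k$; taking $k$ large enough that $3^k\theta,3^k\theta'$ are periodic of a common period $q$, this periodic identification is recorded in the orbit portrait of $F$, hence of $G$, so $\lambda_e^G(3^k\theta)=\lambda_e^G(3^k\theta')$. I would then pull this identity back through the dynamics of $G$ one level at a time: at each stage the two descending rays are among the preimage rays of a pair of rays already known to co‑land, hence land at preimages of a common point, and — using that the rays landing at $-a_G$ and at $2a_G$ are pinned down by $\lambda_e^G(\theta)=2a_G$ together with the orbit portrait of $G$ — one checks they are not separated by the free critical point, hence land at the same preimage. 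At level $0$ this yields $\lambda_e^G(\theta)=\lambda_e^G(\theta')$, so $\lambda_e^G(\theta')=2a_G$; by the characterization above $R_e(\theta')$ lands at $G$, and since a parameter ray lands at a single point, $G=G'$.

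I expect this last descent to be the main obstacle: it demands a careful accounting of which external rays of $G$ land at $-a_G$ and at $2a_G$, precisely so that the two descending rays never get separated by $-a_G$. Conceptually this is the assertion that the basilica‑lamination description of $\Psi$ from Lemma~\ref{lem: bas-lam-psi} extends continuously over the critically finite point $F$ with $\widehat\Psi(F)=G$, the identification $\theta\sim\theta'$ persisting because $r_e^F(\theta)$ and $r_e^F(\theta')$ land at the \emph{same point} of $J(F)$ — not merely on the same Fatou component — while the construction preserves external angles and carries $2a_F$ to the cocritical point $2a_G$. An alternative finish, avoiding the explicit descent, would invoke combinatorial rigidity of critically finite maps: having shown that $F$, $G$ and $G'$ share all period‑$q$ orbit portraits, one reconstructs the rational lamination of $G$ and checks that it forces $r_e^G(\theta')$ to land at $2a_G$, whence $G=G'=\widehat\Psi(F)$.
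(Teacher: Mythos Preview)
Your approach is essentially the one the paper takes: use the cocritical‑point characterization of parameter‑ray landing, push the rays into a common tessellation face via Corollary~\ref{cor:samefaces}, invoke the orbit‑portrait containment of Lemma~\ref{lem:same_orb_ports}, and then pull the periodic identification $3^k\theta\sim 3^k\theta'$ back to level~$0$.

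The one substantive difference is where the pull‑back is carried out. The paper does not descend at the single map $G$; instead it asserts that the preperiodic rays $r_e(\theta),r_e(\theta')$ co‑land \emph{for every map in the face} $\mathcal{F}_k$ (respectively $\mathcal{F}'_j$), and only then specializes to the landing point $F_2'$. The advantage is that on a connected face the ``which preimage'' choice is locally constant, so one only needs a single base point where the co‑landing is known: on the $\Sone$ side this is the critically finite map $F_1$ itself, and on the $\Esc_2^B$ side any escape map $\Psi(F)$ with $F\in\mathcal{F}_k\cap\Esc_1$ serves (Lemma~\ref{lem: bas-lam-psi} gives the co‑landing there directly). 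Your descent at $G$ alone is correct in principle but, as you note, requires an explicit check that the two descending rays are never separated by $-a_G$; the paper's face‑wide formulation sidesteps exactly this bookkeeping. Both proofs are terse at this step; your flagging of it as the main obstacle is well placed, and your alternative via combinatorial rigidity of critically finite maps would also close it, though the paper does not go that route.
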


\begin{proof}
Since $R_e(\theta)$ and $R_e(\theta')$ land together on $\partial \Esc_1$ at a parameter $F_1$, then for any $q$ both these rays must belong to the same component $\mathcal{F}_k$ as each other in $\Tess_q(\overline{\Sone}) \cap \Esc_1$. Note that since we are in $\Sone$, there are no orbit relations coming from the quasiconformal copies of the disk in the filled Julia set of maps in $F_k$. The dynamical rays $r_e^{F_1}(\theta)$ and $r_e^{F_1}(\theta')$ land at the cocritical point of $F_1$, and this point eventually maps onto a repelling cycle of some period $n$.  Each point on the repelling cycle must be the landing point of at least two rays, and thus this orbit appears in the period $n$ orbit portrait for all maps in  $\mathcal{F}_k$. In particular, by taking preimages we see that the rays $r_e(\theta)$ and $r_e(\theta')$ land together in the dynamical plane for all maps in $\mathcal{F}_k$.

Since the landing points of the rays $R_e(\theta)$ and $R_e(\theta')$ are in the same face $\mathcal{F}_k$ of $\Sone$, then Corollary~\ref{cor:samefaces} implies they must belong to the same component $\mathcal{F}'_j = \Psi(\mathcal{F}_k)$ in $\Tess_q(\overline{\Stwo}) \cap \Esc_2^B$. Furthermore, the period $n$ orbit portrait in $\mathcal{F}_k$ is a subset of that in $\mathcal{F}'_j$ (in fact, if $n \neq 2$,  the orbit portraits are equal). With the same argument as in the previous paragraph, we see that the dynamical rays $r_e^{F'}(\theta)$ and $r_e^{F'}(\theta')$ land together in the dynamical plane for all maps in $F' \in \mathcal{F}'_j$. In particular, for $F_2'$, the landing point of $R_e(\theta)$ in $\partial \Esc_2^B$, the ray $r_e^{F_2'}(\theta)$ lands on the cocritical point together with the ray $r_e^{F_2'}(\theta')$. But since $r_e^{F_2'}(\theta)$ and $r_e^{F_2'}(\theta')$ land on the cocritical point of $F_2'$, the parameter rays $R_e(\theta)$ and $R_e(\theta')$ in $\Esc_2^B$ land on $F_2$.
\end{proof}

\begin{Rem}
 Note that, unlike with the previous results, the converse of the above is not true (indeed, the exact cases of when it fails are discussed in the rest of the paper). The reason for this is that the quasiconformal copies of the basilica for maps in $\Esc_2^B$ introduce extra orbit relations.
\end{Rem}

\begin{proof}[Proof of Theorem~\ref{thm:psiwelldef}]
 The theorem follows from Lemmas~\ref{Lem:coperiod} and \ref{Lem:critfin}.
\end{proof}

The following is immediate.

\begin{cor}
  The extension of $\Psi$ given by $\widehat{\Psi}((1,t)_1) = (1,t)_2^B$ for $t$ rational is well-defined and continuous.
 \label{lem: psi-land}
 \end{cor}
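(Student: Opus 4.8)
\emph{Approach.} The plan is to read off both claims from Theorem~\ref{thm:psiwelldef} together with the basic fact that rational parameter rays in $\Esc_1$ and $\Esc_2^B$ land --- equivalently, that $\Phi_1^{-1}$ and $\Phi_2^{-1}$ extend continuously over the rational points of the unit circle.

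\emph{Well-definedness.} First I would observe that a point $x\in\partial\Esc_1$ at which a rational parameter ray lands may be the common landing point of finitely many rational rays $R_e(\theta_1),\dots,R_e(\theta_k)$, so that $x=(1,\theta_j)_1$ for each $j$; the formula $\widehat{\Psi}(x)=(1,\theta_j)_2^B$ is then unambiguous exactly when the points $(1,\theta_1)_2^B,\dots,(1,\theta_k)_2^B$ coincide, i.e.\ when $R_e(\theta_1),\dots,R_e(\theta_k)$ all land together on $\partial\Esc_2^B$. This is precisely what Theorem~\ref{thm:psiwelldef} provides. On $\Esc_1$ the formula reproduces the conformal isomorphism $\Psi$, so well-definedness on the whole domain follows.

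\emph{Continuity.} Since $\widehat{\Psi}$ equals $\Psi$ on $\Esc_1$, it suffices to verify continuity at a boundary point $x_0=(1,t_0)_1$. Let $t_0=\theta_1,\dots,\theta_k$ be the finitely many angles of the parameter rays of $\Esc_1$ landing at $x_0$; pairwise these land together on $\partial\Esc_1$, so by Theorem~\ref{thm:psiwelldef} the corresponding rays of $\Esc_2^B$ all land at a single point $y_0=(1,t_0)_2^B=\widehat{\Psi}(x_0)$. Given $x_n\to x_0$ in the domain, I would write $x_n=(\rho_n,s_n)_1$ with $\rho_n\ge 1$ (taking $s_n$ to be the angle of some parameter ray landing at $x_n$ if $\rho_n=1$), and argue, using the structure of parameter rays from \cite{ICP2}, that $\rho_n\to 1$ and that the $s_n$ accumulate only on $\{\theta_1,\dots,\theta_k\}$ --- the point being that the rays $R_e(\theta_1),\dots,R_e(\theta_k)$ together with $x_0$ locally separate $\Esc_1$. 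Then any subsequence of $(x_n)$ has a further subsequence along which $\rho_n\to1$ and $s_n\to\theta_j$ for some $j$, and the landing of $R_e(\theta_j)$ in $\Esc_2^B$ at $y_0$ forces $\widehat{\Psi}(x_n)=(\rho_n,s_n)_2^B\to y_0$ along it. Since $y_0$ is independent of $j$, this yields $\widehat{\Psi}(x_n)\to\widehat{\Psi}(x_0)$.

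\emph{Main obstacle.} The one point that genuinely needs care is the local-separation statement invoked in the continuity step: that convergence $x_n\to x_0$ really does push the angle coordinates toward the finitely many angles of rays landing at $x_0$ (together with the auxiliary facts that $\rho_n\to 1$ and that only finitely many rays land at a rational boundary point). This is standard structural information about parameter rays extractable from \cite{ICP2}; once it is granted, the corollary is indeed immediate from Theorem~\ref{thm:psiwelldef}.
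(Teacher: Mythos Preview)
Your well-definedness argument is exactly what the paper intends: the corollary is stated there as ``immediate'' after Theorem~\ref{thm:psiwelldef}, with no further proof, and your unpacking (if several rational rays land at the same boundary point of $\Esc_1$, Theorem~\ref{thm:psiwelldef} forces their counterparts in $\Esc_2^B$ to land together) is precisely the intended one-line justification.

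For continuity you go beyond the paper, which offers no argument at all. Your sequential reduction is a natural way to organise the claim, and you are right to isolate the local-separation step as the only substantive ingredient: without some control on prime-end impressions near the $\theta_j$ (e.g.\ local connectivity of $\partial\Esc_1$ at these rational landing points), it is not automatic that the angle coordinates $s_n$ accumulate only on $\{\theta_1,\dots,\theta_k\}$, nor that $(\rho_n,s_n)_2^B\to(1,\theta_j)_2^B$ once they do. Whether this really drops out of \cite{ICP2} in the form you need is worth checking rather than asserting. That said, the paper is no more rigorous here than you are---it simply declares the result immediate---so your write-up is, if anything, an improvement in that it names the actual obstacle rather than sweeping it under the rug.
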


\section{Main results}\label{sec:main}

We begin with our main results by extending the result of Lemma \ref{lem: bas-lam-psi} to the boundary to obtain a similar result describing how to obtain $K(\widehat{\Psi}(F))$ topologically from $K(F)$. To begin, we show that the maps in $\partial \E_{2}^{B}$ which are landing points of parameter rays have a distinguished fixed point. We first deal with the case where these rays are not coperiodic of co-period two.

\begin{lemma}
Suppose $t \in \R/\Z$ is not coperiodic of co-period two. The map $(1,t)_{2}^{B} \in \partial \E_{2}^{B}$ has a unique fixed point in $J$ on the boundary of the Fatou components containing the marked critical point $a$, and its associated critical value $v$. Furthermore, the rays which land at this fixed point are exactly those which land at the fixed point $\alpha$ for any map $(\rho,t)_{2}^{B} \in R_{e}(t) \subset \E_{2}^{B}$.    
\label{lem: alpha bndry}
\end{lemma}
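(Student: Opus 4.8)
The plan is to realise the boundary map $F_0 := (1,t)_2^B$ as the limit, along the parameter ray $R_e(t)$, of the maps $F_\rho := (\rho,t)_2^B \in \E_2^B$, and to carry the fixed point $\alpha$ of Lemma~\ref{lem: ! fp alpha} and the external rays landing on it through this limit by means of Theorem~\ref{thm: BM}. First I would locate $F_0$ in the tessellation. Since $t$ is not coperiodic of co-period two, $R_e(t)$ is not an edge of $\Tess_2(\overline{\Stwo})$, and its landing point $F_0$ is not a parabolic vertex of $\Tess_2$ (a coperiodic ray in $\E_2^B$ is a primary ray landing at the root of a component of matching period, so a non-co-period-two ray never lands at a period two parabolic point; and if $F_0$ is not parabolic it is certainly not a vertex). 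As parameter rays are pairwise disjoint, the connected set $R_e(t)\cup\{F_0\}$ lies in a single face $\mathcal F$ of $\Tess_2(\overline{\Stwo})$, with $F_0$ in its interior and $F_\rho\in\mathcal F$ for all $\rho$ close to $1$; the same reasoning places $F_0$ and the nearby $F_\rho$ in a common face of $\Tess_q$ for every $q$ for which $F_0$ is not a parabolic vertex of $\Tess_q$.

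Next I would apply Theorem~\ref{thm: BM}. Fix a period two angle $t_1$ whose dynamical ray lands at $\alpha(F_\rho)$ for $F_\rho\in R_e(t)\cap\E_2^B$ (such an angle exists by Lemmas~\ref{lem: no.raysalpha} and~\ref{lem: four rays land}). Applied to $\mathcal F$ with $q=2$, the theorem yields a holomorphic function $z\colon\mathcal F\to\C$, $z(F)=\lambda_e^F(t_1)$, with repelling periodic values and with period two orbit portrait constant on $\mathcal F$. Set $\alpha_0:=z(F_0)=\lim_{\rho\to 1}\alpha(F_\rho)$. Passing to the limit in $f_{F_\rho}(\alpha(F_\rho))=\alpha(F_\rho)$ shows $\alpha_0$ is a fixed point of $f_{F_0}$; it is repelling by Theorem~\ref{thm: BM}, so $\alpha_0\in J(F_0)$; and since the period two orbit portrait is constant on $\mathcal F$, the period two rays landing at $\alpha_0$ are exactly those landing at $\alpha(F_\rho)$. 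Running the same argument for each period $q$, using the face of $\Tess_q$ containing $F_0$ and the nearby $F_\rho$, shows that the orbit portraits of all periods agree along $R_e(t)$; since every ray landing at a periodic point is itself periodic, the full set of rays landing at $\alpha_0$ coincides with the full set landing at $\alpha(F_\rho)$, which is the ``furthermore'' assertion. (For the single period $q_0$, if any, equal to the period of the parabolic cycle of $F_0$, this step fails because $F_0$ is then a vertex of $\Tess_{q_0}$; that case is absorbed into the renormalisation below.)

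It remains to show that $\alpha_0$ lies on $\partial U_a\cap\partial U_v$ and is the only fixed point there, where $U_a\ni a$ and $U_v\ni v$ are the Fatou components of the superattracting two-cycle of $F_0$. The crucial point is that the free critical point $-a$ stays off $\overline{U_a\cup U_v}$: being non-hyperbolic, $F_0$ has $-a$ either strictly preperiodic or in a parabolic basin, and here the hypothesis that $t$ is not co-period two (via Lemma~\ref{lem: pertwoland}) is what keeps $-a$ away from $\partial U_a\cup\partial U_v$. Granting this, $F_0$ restricted to a suitable neighbourhood of $\overline{U_a\cup U_v}$ is a degree two polynomial-like map whose only critical point is $a$ and whose critical orbit $a\mapsto v\mapsto a$ has period two; by the straightening theorem it is hybrid equivalent to $z\mapsto z^2-1$, the unique quadratic with a superattracting two-cycle. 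Transporting the two fixed points of the basilica through the straightening, $U_a$ and $U_v$ touch exactly at the image $\alpha'$ of the basilica's $\alpha$-fixed point, a repelling fixed point of $F_0$, whereas the image of the $\beta$-fixed point and the remaining fixed point of $F_0$ lie off $\partial U_a\cap\partial U_v$; this gives existence and uniqueness. Finally $\alpha'=\alpha_0$, because both are repelling fixed points of $F_0$ at which the cubic rays corresponding to the basilica rays $\tfrac13,\tfrac23$ land, and by the previous paragraph this combinatorial data determines the point; this also identifies the rays landing at $\alpha_0$ with those landing at $\alpha(F_\rho)$ and disposes of the period $q_0$ gap above.

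The step I expect to be the main obstacle is the last one: verifying that the polynomial-like renormalisation about the two-cycle genuinely survives at the boundary of $\E_2^B$ — that is, that the free critical point $-a$ really does stay clear of $\overline{U_a\cup U_v}$ for the parameters considered, which is exactly where the co-period two hypothesis must be exploited and where one may need the detailed analysis of \cite{Thesis} — together with the identification of the $\alpha$-fixed point of the resulting basilica copy with the point $\alpha_0$ obtained by analytic continuation in the second step. By contrast, locating $F_0$ inside a face and invoking Theorem~\ref{thm: BM} are routine.
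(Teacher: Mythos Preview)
Your core argument is the same as the paper's: since $t$ is not coperiodic of co-period two, the landing point $(1,t)_2^B$ lies in the same face of $\Tess_2(\overline{\Stwo})$ as the rest of $R_e(t)$, and Theorem~\ref{thm: BM} then transports the period two orbit portrait (and hence the rays landing at $\alpha$) from the escape region to the boundary map. The paper's proof is precisely this, stated in two sentences.

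Where you diverge is in how much you prove. The paper stops after invoking Theorem~\ref{thm: BM}, treating the uniqueness of the fixed point and its location on $\partial U_a\cap\partial U_v$ as implicit; you instead supply a polynomial-like renormalisation argument to establish these explicitly. That extra step is sound in outline and arguably fills a genuine gap in the paper's terse proof, though your self-identified worry about keeping $-a$ off $\overline{U_a\cup U_v}$ is real and would need the co-period two hypothesis to rule out degenerations. Your digression through all periods $q$ is unnecessary: the only rays landing at a repelling fixed point on the boundary of the basilica-like two-cycle are period two rays (they are permuted by $F_0$, so their angles form a two-cycle under tripling), so $\Tess_2$ alone suffices for the ``furthermore'' clause, exactly as the paper has it.
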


\begin{proof}
Suppose $t$ is not coperiodic of co-period two. The landing point $(1,t)_{2}^{B}$ is in the same component of the period two decomposition as the rest of the ray $R_{e}(t)$. Therefore, by Theorem \ref{thm: BM}, the dynamic rays which land together at $\alpha$ for any map along the parameter ray $R_{e}(t)$ still land together at a fixed point in the landing map $(1,t)_{2}^{B}$.  \end{proof}

We will continue to refer to this unique fixed point as $\alpha$ for maps on the boundary just as we did for maps in the escape region.

The following result extends that of Lemma \ref{lem: bas-lam-psi} to the boundary of $\E_1$. The first part deals with landing points of parameter rays which are not coperiodic. For such maps, the free critical point $-a$ is in the Julia set, and every component of the filled Julia set is a component of the basin of attraction of the marked critical point $a$. The second part then deals with landing points of coperiodic parameter rays, which are parabolic maps. For these maps, $-a$ belongs to a parabolic basin, and the filled Julia set consists of the basin of attraction of $a$ as well as the parabolic basins.  


\begin{theorem}
Given $F = (1,t)_{1} \in \partial\E_{1}$.
\begin{enumerate}
\item If $R_{e}(t)$ is not coperiodic, $K(\widehat{\Psi}(F)) = K((1,t)_{2}^{B})$ is homeomorphic to the quotient of $K(F)$ obtained by making the identifications from the lamination of the basilica along the internal rays of every Fatou component of $K(F)$. 
\item If $R_{e}(t)$ is coperiodic, $K(\widehat{\Psi}(F)) = K((1,t)_{2}^{B})$ is homeomorphic to the quotient of $K(F)$ obtained by making the identifications from the lamination of the basilica along the internal rays of the Fatou components of $K(F)$ which make up the basin of attraction $\A_{a_{F}}$ of the marked critical point $a_{F}$ for $F$.
\end{enumerate}
\label{thm: thebigone}
\end{theorem}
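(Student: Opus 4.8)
The plan is to transport the proof of Lemma~\ref{lem: bas-lam-psi} to the boundary, using the orbit-portrait machinery of Theorem~\ref{thm: BM} together with Lemma~\ref{lem: alpha bndry} to control the limiting dynamics. Write $F = (1,t)_1 \in \partial\E_1$ and $F' = (1,t)_2^B = \widehat{\Psi}(F)$, the last equality being Corollary~\ref{lem: psi-land}. In either case of the statement the maps $F$ and $F'$ are critically finite: critically preperiodic when $R_e(t)$ is not coperiodic, and parabolic when it is. Hence $K(F)$ and $K(F')$ are locally connected, and each is the quotient of $\overline{\D}$ by the lamination recording which external rays land at a common point. It therefore suffices to show that the landing lamination of $F'$ is obtained from that of $F$ by adjoining, inside each Fatou component of $\A_{a_F}$ (respectively each bounded Fatou component, in case (1)), the chords of the basilica lamination carried along the internal rays of that component; passing to quotients of $\overline{\D}$ then yields the asserted homeomorphism $K(F') \cong K(F)/{\sim}$.

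For the inclusion that the lamination of $F'$ contains the one just described, one argues by orbit portraits. By Theorem~\ref{thm: BM} orbit portraits are locally constant on $\mathcal{C}(\Sn)$, and by the reasoning in the proof of Lemma~\ref{lem: alpha bndry} the face of $\Tess_q(\overline{\Sn})$ containing the parameter ray $R_e(t)$ also governs the landing map; combined with Corollary~\ref{cor:samefaces} this places $F$ and $F'$ in corresponding faces of $\Tess_q(\overline{\Sone})$ and $\Tess_q(\overline{\Stwo})$ for every $q$. By Lemma~\ref{lem:same_orb_ports} their period-$q$ orbit portraits then coincide for $q \neq 2$, while for $q=2$ the portrait of $F'$ is that of $F$ together with the relations at $\alpha$ (Lemma~\ref{lem: land two cycle}) and, by pulling back, at every iterated preimage of $\alpha$. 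Since the identification pattern of external rays at the (pre)periodic points of a critically finite or parabolic map is governed by its orbit portraits together with pullback, running the inductive argument in the proof of Lemma~\ref{lem: bas-lam-psi} — starting from the pair of period-$2$ rays at $\alpha$, passing to the non-fixed preimage of $\alpha$ in the component containing it, and so on — shows that the new relations in $F'$ are exactly those produced by applying the basilica lamination along the internal rays of each component of $\A_{a_F}$.

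For the reverse inclusion (no further identifications) one argues as at the end of the proof of Lemma~\ref{lem: bas-lam-psi}: every non-point component of $K(F')$ lying in $\A_{a_{F'}}$ is hybrid equivalent to the basilica polynomial, the argument of Lemma~\ref{lem: ! fp alpha} being valid on the boundary as well, so no identifications occur within such a component beyond those of the basilica lamination. In case (1), where $-a \in J(F)$, every bounded Fatou component of $K(F)$ lies in $\A_{a_F}$, so the quotient is taken over all of them and this accounts for all of $K(F')$, giving part~(1). In case (2) the free critical point $-a$ of $F$ lies in a parabolic basin; since $\alpha$ and its preimages lie on $\partial\widehat{\A}_a$, the previous paragraph introduced no new external-ray identifications among the parabolic components, so the corresponding components of $K(F)$ and $K(F')$ carry the same identifications and are mapped homeomorphically to one another, and the quotient need only be taken over the components of $\A_{a_F}$, giving part~(2).

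The step I expect to be the main obstacle is Step 2 in the coperiodic (parabolic) case: one must check that the face/orbit-portrait correspondence of Corollary~\ref{cor:samefaces} genuinely survives the passage to the parabolic landing point, and hence that the parabolic basin of $-a$ is left untouched by $\widehat{\Psi}$. For periods $q$ other than the coperiod of $t$ the parabolic map still has a well-defined orbit portrait, so the difficulty is concentrated at that single period, where several rays collide at the parabolic point and the local dynamics degenerates; resolving it will require a careful local analysis there, using the enumeration of rays landing at a parabolic vertex from \cite{CP3} together with the stability of orbit portraits on $\mathcal{C}(\Sn)$ from Theorem~\ref{thm: BM}, to verify that the only external-ray identifications of $F'$ not already present for $F$ are those on the orbit of $\alpha$. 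The corresponding point in case (1) is comparatively routine, since there the landing map is critically preperiodic and sits in the interior of the connectedness locus where orbit portraits are honestly locally constant.
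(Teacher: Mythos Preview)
Your strategy for part~(i) --- transport Lemma~\ref{lem: bas-lam-psi} to the boundary via Lemma~\ref{lem: alpha bndry}, pull back from $\alpha$ inductively, and then argue that every bounded Fatou component eventually hits the superattracting cycle because $-a \in J$ --- is exactly the paper's argument, only phrased more formally through laminations and orbit portraits. No issue there.

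The gap is in part~(ii), and you have correctly located it: when $t$ is coperiodic of coperiod $q_0$, the landing map is a \emph{vertex} of $\Tess_{q_0}$, not a point of a face, so the period-$q_0$ orbit portrait is not covered by Theorem~\ref{thm: BM} and Corollary~\ref{cor:samefaces} does not apply at that period. Your proposed fix --- ``careful local analysis'' of the rays colliding at the parabolic vertex via \cite{CP3} --- is more machinery than is needed, and you do not actually carry it out. The paper sidesteps the whole issue with a direct observation: for \emph{both} $F$ and $\widehat{\Psi}(F)$ the dynamical rays $r_e(t-\tfrac13)$ and $r_e(t+\tfrac13)$ land at the parabolic periodic point on the boundary of the Fatou component containing the free critical point $-a$ (this comes straight from $F \in R_e(t)$ and the definition of coperiodic). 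Since $a$ is already periodic, $-a$ is the only critical point available to be attracted to the parabolic cycle, so the period of the parabolic orbit and the ray-landing pattern on the entire parabolic basin are forced to be the same for $F$ and $\widehat{\Psi}(F)$. That is enough to conclude that no identifications beyond the basilica ones on $\A_{a_F}$ are introduced, and it never touches the degenerate period-$q_0$ orbit portrait at all. Once you see this, the parabolic case is actually shorter than the non-coperiodic one.
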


\begin{proof}
Let $F = (1,t)_{1} \in \partial\E_{1}$.
\begin{enumerate}
\item Assume that $R_{e}(t)$ is not coperiodic. By Lemma \ref{lem: alpha bndry} we have the fixed point $\alpha \in K(\widehat{\Psi}(F))$. Similarly to Lemma \ref{lem: bas-lam-psi}, there are one or two pairs of period two dynamic external rays landing together at $\alpha$, for which the corresponding dynamic rays for $F$ do not land together. We describe this as an identification made by $\widehat{\Psi}$, and we find other identifications by taking preimages as we did previously in Lemma \ref{lem: bas-lam-psi}. Thus we conclude that the identifications from the lamination of the basilica along the internal rays of every Fatou component of $K(F)$ exist in $K(\widehat{\Psi}(F))$. We want to show that these are the only identifications made by $\widehat{\Psi}$. Since $R_{e}(t)$ is not coperiodic, neither map $F, \widehat{\Psi}(F)$ is a parabolic map, see \cite{CP3}. Therefore, the free critical point is in the Julia set for both of these maps. Since each attracting cycle must attract a critical point, and the free critical point is in the Julia set, the maps must each have one attracting cycle, namely that of the marked critical point. It then follows that every bounded Fatou component of $F$ is a preimage of the main disc component, and every Fatou component of $\widehat{\Psi}(F)$ eventually maps to the two cycle of the Fatou components containing the marked critical point and its associated critical value. Therefore, the only identifications made by $\widehat{\Psi}$ are those which come from putting the lamination of the basilica in every disc component of $K(F)$. 
\item Now, suppose that $R_{e}(t)$ is coperiodic. It follows that $F$ and $\widehat{\Psi}(F)$ are parabolic maps, see \cite{CP3}. We again have the identifications arising from preimages of $\alpha \in K(\widehat{\Psi}(F))$ which are described by putting the lamination of the basilica in each component of $K(F)$. Every other Fatou component of $F$ is then a component of the parabolic basin. In both $K(F)$ and $K(\widehat{\Psi}(F))$, we must have the rays $r_{e}(t - \frac{1}{3})$ and $r_{e}(t + \frac{1}{3})$ landing at the parabolic point in the same component of the parabolic basin as the free critical point $-a$. Since $-a$ is the only critical point which could be attracted to the parabolic orbit because of the periodicity of $a$, this then determines the period of the parabolic point, and we conclude that no further identifications are made, i.e. the rays which land together on the boundary of the parabolic basin in $K(F)$ are exactly those which land together on the boundary of the parabolic basin in $K(\widehat{\Psi}(F))$. 
\end{enumerate}
\end{proof}

We now look to apply the dynamical information of Theorem \ref{thm: thebigone} to prove our main results in parameter space. We will begin with the easier of our two cases, where we consider a map on the boundary of a type C component of $\Sone$. The following result describes where to find pairs of distinct maps on the boundary of a type C component of $\Sone$ which will be mapped by $\widehat{\Psi}$ to the same image in $\Stwo$.

\begin{theorem}
Suppose $F = \Lambda_{i}(t)$ on the boundary of a type C component of $\Sone$. 
\begin{enumerate}
\item If $t$ is a basilica angle with partner  $\tilde{t}$, let $\tilde{F} = \Lambda_{i}(\tilde{t})$ on the boundary of the same type C component. Then we have $\widehat{\Psi}(F) = \widehat{\Psi}(\tilde{F})$.
\item If $t$ is not a basilica angle, then $F$ is the only preimage under $\widehat{\Psi}$ of $\widehat{\Psi}(F)$.
\end{enumerate}
In particular, $\widehat{\Psi}$ is not injective on the boundary of type $C$ components precisely at the points whose internal arguments are basilica angles.
\label{thm: mainC}
\end{theorem}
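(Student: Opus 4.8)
### Proof proposal for Theorem~\ref{thm: mainC}

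The plan is to combine the topological description of $K(\widehat{\Psi}(F))$ from Theorem~\ref{thm: thebigone}(1) with the structure of type $C$ components, reading the dynamical collapsing off into parameter space via the internal ray coordinate. Throughout, a type $C$ component $H \subset \Sone$ is a capture component: the marked critical point $a$ is fixed (in its immediate basin $\widehat{\A}_a$) and the free critical point $-a$ lies in some strictly preperiodic disc component $D$ of $K(F)$ which is a preimage of $\widehat{\A}_a$. The center of $H$ is the map for which $-a$ is the center of $D$, and the internal ray coordinate $\Lambda_i(t)$ parametrizes $\partial H$ by the argument $t$ of the internal ray of $D$ on whose landing point $-a$ sits when $F \in \partial H$.

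\emph{Step 1 (Identify the disc on whose boundary the collapsing happens).} For $F = \Lambda_i(t) \in \partial H$, the free critical point $-a$ lands on the boundary point $\lambda_i^F(t)$ of the distinguished disc component $D$. Applying Theorem~\ref{thm: thebigone}(1), $K(\widehat{\Psi}(F))$ is the quotient of $K(F)$ obtained by inserting the basilica lamination along the internal rays of \emph{every} Fatou component; in particular along the internal rays of $D$. The key point is that the position of the critical value $\widehat{\Psi}(F)$-image of $-a$ inside the ``basilica-ized'' copy of $D$ is governed precisely by where $t$ sits relative to the basilica lamination. The basilica lamination identifies two internal arguments $t, \tilde t$ of $D$ if and only if $\{t,\tilde t\}$ is a basilica pair (Definition~\ref{def: bas pair}), i.e. iff $t$ is a basilica angle; and in the quotient, the landing points of the internal rays of argument $t$ and $\tilde t$ become a single point.

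\emph{Step 2 (Translate to parameter space).} Because $\widehat{\Psi}$ is continuous (Corollary~\ref{lem: psi-land}) and maps the parameter ray coordinate $t$ to the parameter ray coordinate $t$, the induced map on $\partial \E_2^B$ identifies the boundary point of the copy of $D$ inside $K(\widehat{\Psi}(F))$ that the critical value occupies. Two maps $\Lambda_i(t)$ and $\Lambda_i(\tilde t)$ on $\partial H$ have the same image under $\widehat{\Psi}$ exactly when the critical values land at the same point of the (basilica-ized) disc $D$ in the common quotient dynamical plane — which, by Step 1, happens precisely when $\{t,\tilde t\}$ is a basilica pair. This gives (1): if $t$ is a basilica angle with partner $\tilde t$, then $\Lambda_i(\tilde t) \in \partial H$ (the partner internal argument still corresponds to a boundary point of the same component, since the basilica pairing is preserved within $D$) and $\widehat{\Psi}(\Lambda_i(t)) = \widehat{\Psi}(\Lambda_i(\tilde t))$. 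For (2): if $t$ is not a basilica angle, then the internal ray of argument $t$ in $D$ is not identified with any other by the basilica lamination, so the image critical value pins down $t$ uniquely; any $F' = \Lambda_i(t')$ with $\widehat{\Psi}(F') = \widehat{\Psi}(F)$ would force the basilica lamination to identify $t$ with $t'$, forcing $t' = t$ and $F' = F$. One must also check that no map on the boundary of a \emph{different} type $C$ component, nor on the boundary of $\phyp$ or a type $D$ component, can share the image: this follows because $\widehat{\Psi}$ restricted to each such boundary arc respects the parameter-ray combinatorics (Corollary~\ref{cor:samefaces}), and the preimage component of $\E_2^B$ containing $\widehat{\Psi}(F)$ is determined by the orbit portrait, which by Lemma~\ref{lem:same_orb_ports} matches that of $F$'s component for $q \neq 2$.

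\emph{Step 3 (Assemble the final sentence).} Combining (1) and (2) over all type $C$ components and all rational internal arguments $t$ yields exactly: $\widehat{\Psi}$ fails to be injective on $\bigcup \partial(\text{type }C)$ precisely at the points $\Lambda_i(t)$ with $t$ a basilica angle, and there the fiber is the pair $\{\Lambda_i(t), \Lambda_i(\tilde t)\}$ within a single type $C$ component.

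\emph{Main obstacle.} The delicate step is Step 1–2: rigorously arguing that the topological quotient description of $K(\widehat{\Psi}(F))$ translates into a statement about \emph{which parameter point} $\widehat{\Psi}(F)$ is — i.e. that two boundary maps of $H$ collapse together in parameter space if and only if their marked critical values collapse together in the common dynamical quotient. This requires knowing that the internal-ray coordinate on $\partial H$ is compatible, under $\widehat{\Psi}$, with the position of the critical value inside the basilica-ized copy of $D$, and that the basilica lamination restricted to $D$ is exactly the basilica pairing on internal arguments (with no extra identifications from other Fatou components leaking in, which is guaranteed by the ``no further identifications'' clause of Theorem~\ref{thm: thebigone}(1)). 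Handling the boundary case where $t$ is rational but the relevant dynamical rays are not quite generic — e.g. when $-a$'s internal argument $t$ is itself preperiodic but lands on a point where extra rays accumulate — will need the non-coperiodic hypothesis and a short appeal to \cite{CP3} that such $F$ lands at a critically finite non-hyperbolic parameter, so the combinatorics is still rigid.
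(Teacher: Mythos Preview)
Your overall strategy---use Theorem~\ref{thm: thebigone} to read off which boundary points of a disc component collapse under the basilica lamination, then push this to parameter space---is the same as the paper's. But your Step~2 has a genuine gap, and it is exactly the one you flag as the ``main obstacle'' without actually closing.

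The problem is the bridge between dynamics and parameter space. You write that two maps $\Lambda_i(t)$, $\Lambda_i(\tilde t)$ on $\partial H$ have the same $\widehat{\Psi}$-image ``exactly when the critical values land at the same point of the (basilica-ized) disc $D$ in the common quotient dynamical plane''. But $F$ and $\tilde F$ are different maps with different dynamical planes; there is no common quotient plane to compare in, and no mechanism in your argument that turns a topological coincidence of critical-value positions into an equality of parameters in $\Stwo$. You also track the free critical point $-a$, whereas the parameter--dynamics correspondence runs through the \emph{cocritical} point $2a$: by the definition of $\Phi_\E$, a parameter ray $R_e(\theta)$ lands at $F$ if and only if the dynamical ray $r_e^F(\theta)$ lands at $2a_F$.

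The paper closes the gap precisely via this external-ray bridge. If $\{t_1,\dots,t_n\}$ are the external parameter angles landing at $F$, then equivalently $r_e^F(t_1),\dots,r_e^F(t_n)$ all land at $2a_F$, which sits at internal argument $t$ on the boundary of its Fatou component. Set $G=\widehat{\Psi}(F)$. Since $\widehat{\Psi}$ preserves parameter-ray arguments (Corollary~\ref{lem: psi-land}), the same angles $t_1,\dots,t_n$ label dynamical rays landing at $2a_G$. Now invoke Theorem~\ref{thm: thebigone}: if $t$ is not a basilica angle, no further external rays land at $2a_G$, hence no further parameter rays land at $G$, and $\widehat{\Psi}^{-1}(G)=\{F\}$. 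If $\{t,\tilde t\}$ is a basilica pair, the external angles that for $F$ landed at the boundary point of internal argument $\tilde t$ on the same component now also land at $2a_G$; hence those parameter rays land at $G$ too, and following them back in $\Sone$ gives $\tilde F=\Lambda_i(\tilde t)$ on $\partial H$. This is what replaces your ``common quotient plane'' argument: external-ray \emph{labels} are the invariants that transfer between the two dynamical planes and back to parameter space.
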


\begin{proof}
Let $F$ be on the boundary of a type C component of $\Sone$ with internal argument $t$. Suppose there are $n$ external parameter rays landing at $F$, with arguments $\{t_{1},t_{2},...,t_{n}\}$\footnote{Conjecturally, there are at most two such parameter rays.}. Then in the dynamical plane, the $n$ external rays of arguments $\{t_{1},...t_{n}\}$ all land at the cocritical point $2a_{F}$. The point $2a_{F}$ is on the boundary of a Fatou component of $F$, and the argument of the internal ray in this Fatou component landing at this point is $t$. Now, for $G := \widehat{\Psi}(F) \in \Stwo$, we still have the $n$ external rays of arguments $\{t_{1},...,t_{n}\}$ landing at the cocritical point $2a_{G}$ since $\Psi$ preserves arguments of parameter rays, and by Corollary \ref{lem: psi-land}, $\widehat{\Psi}$ sends the landing point of the parameter ray of argument $t$ in $\Sone$ to the landing point of the parameter ray of argument $t$ in $\Stwo$. If $t$ is not a basilica angle, then it follows from Theorem \ref{thm: thebigone} that these are the only external arguments for rays landing at $2a_{G}$. Therefore, we conclude that these are the only arguments for parameter rays landing at $G$, and $\widehat{\Psi}^{-1}(G) = \{F\}$. However, if $t$ is a basilica angle with partner $\tilde{t}$, then by Theorem \ref{thm: thebigone} the arguments of the external rays for $F$ landing on the component whose boundary contains $2a_{F}$ at internal argument $\tilde{t}$ will be arguments of external rays for $G$ which also land at $2a_{G}$. This means that $G$ is the landing point of parameter rays of arguments $t$ and $\tilde{t}$ in $\Stwo$. We may then conclude that $\widehat{\Psi}^{-1}(\widehat{\Psi}(F)) = \{F,\tilde{F}\}$. 
\end{proof}

We will now focus on the principal hyperbolic component $\phyp$ in $\Sone$, or in other words, the unique type A component. This case is more difficult due to the double cover of internal arguments in this component. It will be convenient to split this component up into four quadrants, which will be separated by the four internal rays of arguments  $\frac{1}{3}$ and $\frac{2}{3}$. We will label them so that the first quadrant contains on its boundary the landing map of the external ray of argument zero, and the rest will be labeled in the counterclockwise direction as usual, see Figure \ref{fig: S1quad}. We will refer to Quadrants I and III, and similarly II and IV, as opposite quadrants. We will use these quadrants to give a useful notation for internal rays in $\phyp$. By $R_{i}^{\mathrm{I}}(t)$, we will mean the internal ray of argument $t$ which lies in quadrant I. Note that we need to be sure that $t \in (\frac{1}{3},\frac{2}{3})$ for $R_{i}^{\mathrm{I}}(t)$ to exist. We will take care of the internal rays which bound the quadrants by choosing the odd numbered quadrant which it bounds, i.e. $R_{i}^{\mathrm{I}}(\frac{1}{3}), R_{i}^{\mathrm{I}}(\frac{2}{3}), R_{i}^{\mathrm{III}}(\frac{1}{3}),$ and $R_{i}^{\mathrm{III}}(\frac{2}{3})$ will denote these four internal rays in $\phyp$.    

\begin{figure}[htb!]
    \centering
    \includegraphics[width=3.5in]{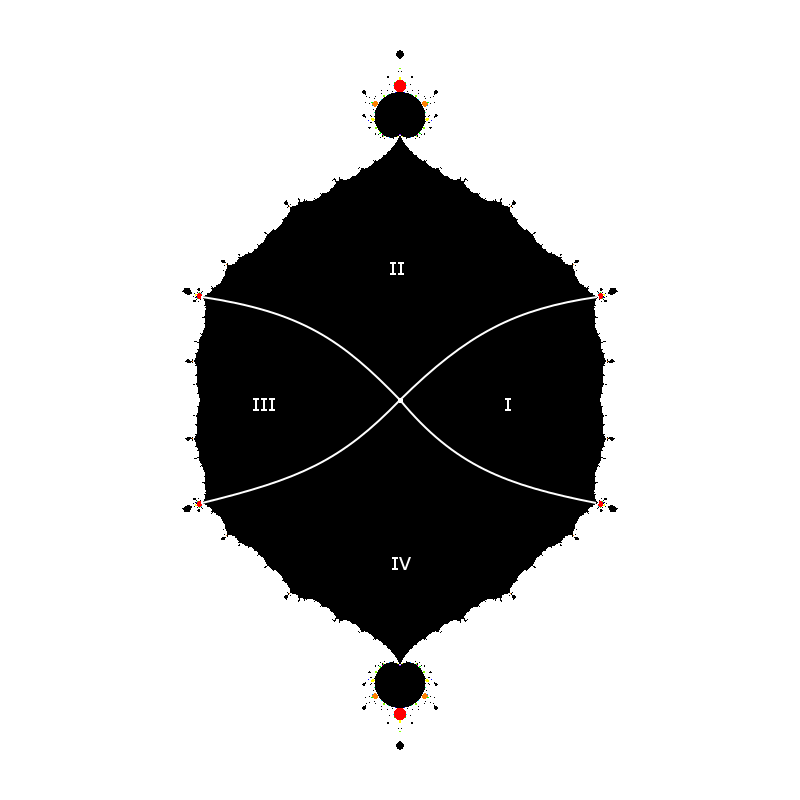}
    \caption{The principal hyperbolic component of $\Sone$ split up into four quadrants.}
    \label{fig: S1quad}
\end{figure}

Let $\widehat{\Lambda}_{i}(t)$ denote the two element set containing the landing maps of both of the internal rays in $\phyp$ of argument $t$. The following result describes which maps on $\partial \phyp$ which are not landing points of a ray of argument $\frac{1}{3}$ or $\frac{2}{3}$ will share a common image under $\widehat{\Psi}$ with another map from $\partial \phyp$.   

\begin{lemma}
Suppose $t \not\in \{\frac{1}{3},\frac{2}{3}\}$, and let $F \in \widehat{\Lambda}_{i}(t) \subset \partial\phyp$. 
\begin{enumerate}
\item If $t$ is a basilica angle with partner $\tilde{t}$, then there is one map from $\widehat{\Lambda}_{i}(\tilde{t})$ in the same quadrant of $\phyp$ as $F$, and one in the opposite quadrant. For at least one of these maps $\tilde{F} \in \widehat{\Lambda}_{i}(\tilde{t})$, we have $\widehat{\Psi}(\tilde{F}) = \widehat{\Psi}(F)$. 

\item If $t$ is not a basilica angle. Then $F$ is the only preimage of $\widehat{\Psi}(F)$.
\end{enumerate}
\label{lem: mainA}
\end{lemma}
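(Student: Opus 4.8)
The plan is to mimic the structure of the proof of Theorem~\ref{thm: mainC}, but keeping careful track of the double cover of internal arguments in $\phyp$. The starting observation is the same: if $F \in \widehat{\Lambda}_i(t)$ with $t \notin \{\frac{1}{3}, \frac{2}{3}\}$, then in the dynamical plane of $F$ the external rays landing at the cocritical point $2a_F$ are exactly the external rays whose angles are the (finitely many) parameter ray angles landing at $F$. The cocritical point sits on the boundary of a component of $K(F)$, and the internal ray of that component landing at $2a_F$ has argument $t$. Passing to $G := \widehat{\Psi}(F)$ via Corollary~\ref{lem: psi-land} and Theorem~\ref{thm: thebigone}(1) (the map $F$ is not parabolic since $t \notin \{\frac13,\frac23\}$ forces the parameter rays landing at $F$ to be non-coperiodic here — this is the point that needs the hypothesis, and I would spell out why $t = \frac13,\frac23$ are the only coperiodic internal arguments on $\partial\phyp$), we see that the only \emph{new} external rays landing at $2a_G$ beyond those already landing at $2a_F$ are the ones forced by the basilica lamination applied in the component of $K(F)$ containing $2a_F$ on its boundary. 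By Lemma~\ref{lem: ! bas pairs}, if $t$ is not a basilica angle the lamination makes no identification at the point of internal argument $t$, so no new parameter rays land at $G$, giving part~(2): $\widehat{\Psi}^{-1}(G) = \{F\}$.

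For part~(1), when $t$ is a basilica angle with partner $\tilde t$, the basilica lamination identifies the internal-argument-$t$ point with the internal-argument-$\tilde t$ point on that component, so the external rays landing at $2a_G$ now include the external rays that, for $F$, land at the point of internal argument $\tilde t$ on that same component. Hence $G$ is the landing point of parameter rays of both arguments $t$ and $\tilde t$ in $\Stwo$, i.e. $G \in \widehat{\Lambda}_i(t)_{\Stwo}$-type data has a second preimage, namely a map in $\widehat{\Lambda}_i(\tilde t) \subset \partial\phyp$. The extra subtlety compared to Theorem~\ref{thm: mainC} is the \emph{quadrant bookkeeping}: $\widehat{\Lambda}_i(\tilde t)$ is a two-element set, and I need to identify which of its two elements is the genuine $\widehat{\Psi}$-preimage of $G$. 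Here I would use that the four internal rays of arguments $\frac13, \frac23$ partition $\partial\phyp$ into four boundary arcs (one per quadrant), that $\widehat{\Psi}$ is continuous (Corollary~\ref{lem: psi-land}) and monotone along the boundary in the external-angle parametrization, and that the basilica pairing $t \leftrightarrow \tilde t$ respects the partition of $(\frac13,\frac23)$ versus $(\frac23, \frac13+1)$ in the appropriate way — this is what yields ``one map from $\widehat{\Lambda}_i(\tilde t)$ in the same quadrant as $F$, and one in the opposite quadrant'', and pins down that (at least) one of them maps to $G$.

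The key combinatorial input is the relationship between internal arguments in $\phyp$ and external parameter-ray arguments: the internal ray of argument $t$ in a given quadrant is the limit of parameter rays whose external angles are determined by $t$ through the usual doubling/tripling dictionary, and a basilica angle $t$ with $2^n t \equiv \frac13$ has its partner $\tilde t$ given by Lemma~\ref{lem: ! bas pairs}; I would track how the two lifts of $t$ and the two lifts of $\tilde t$ to $\partial\phyp$ interleave around the circle of external angles. The main obstacle I anticipate is precisely this step — establishing that exactly \emph{one} of the two maps in $\widehat{\Lambda}_i(\tilde t)$ is the $\widehat{\Psi}$-preimage of $\widehat{\Psi}(F)$ (and not both, and not neither), since the double cover means the naive angle-matching argument of Theorem~\ref{thm: mainC} does not directly transfer. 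I expect this to require a careful case analysis by quadrant, using the symmetry $z \mapsto -z$ (equivalently $a \mapsto -a$) exchanging opposite quadrants, together with Theorem~\ref{thm: thebigone}(1) applied in the dynamical plane to see which of the two candidate identifications is actually realized. Everything else — non-parabolicity, the lamination transfer, and the ``only preimage'' direction in part~(2) — is a direct adaptation of the already-established Theorems~\ref{thm: thebigone} and \ref{thm: mainC}.
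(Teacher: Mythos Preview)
Your overall strategy---mimic the proof of Theorem~\ref{thm: mainC} via the dynamical rays at the cocritical point and Theorem~\ref{thm: thebigone}---is exactly what the paper does, and your treatment of part~(2) and of the ``at least one $\tilde F$'' claim in part~(1) is correct and matches the paper.

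There are two places where you make the argument harder than it needs to be. First, the quadrant distribution claim (``one element of $\widehat{\Lambda}_i(\tilde t)$ in the same quadrant as $F$, one in the opposite'') does not require continuity or monotonicity of $\widehat{\Psi}$, interleaving of lifts, or any case analysis. The paper dispatches it in one line: leaves of the basilica lamination do not cross the minor leaf $\overline{\tfrac13\,\tfrac23}$, so $t$ and $\tilde t$ lie in the same component of $(\R/\Z)\setminus\{\tfrac13,\tfrac23\}$; since opposite quadrants carry internal rays of the same argument range, one lift of $\tilde t$ lands in the quadrant of $F$ and the other in the opposite quadrant. Your phrase ``the basilica pairing respects the partition of $(\tfrac13,\tfrac23)$ versus $(\tfrac23,\tfrac13+1)$'' is precisely this observation, and it alone suffices.

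Second, the ``main obstacle'' you anticipate---showing that \emph{exactly} one of the two candidates in $\widehat{\Lambda}_i(\tilde t)$ is a $\widehat{\Psi}$-preimage---is not part of this lemma at all. The lemma only asserts ``at least one'', and that follows immediately from the Theorem~\ref{thm: mainC}-style argument you outline. Pinning down \emph{which} one (namely $\tilde F_s$, not $\tilde F_o$) is the content of the subsequent Theorem~\ref{thm: mainA}, and the paper proves it there by a separate ray-crossing contradiction, not by the quadrant symmetry or case analysis you sketch.

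One small correction: your claim that $t\notin\{\tfrac13,\tfrac23\}$ forces $F$ to be non-parabolic is not right---root points of type~$D$ components of every period sit on $\partial\phyp$ and are parabolic. This does no harm, however, since Theorem~\ref{thm: thebigone} covers both the coperiodic and non-coperiodic cases, and in either case the basilica lamination is applied to the component of the basin of $a$ carrying $2a_F$ on its boundary, which is all the argument needs.
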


\begin{proof}
Suppose $t \not\in \{\frac{1}{3},\frac{2}{3}\}$ and $F \in \widehat{\Lambda}_{i}(t) \subset \partial\phyp$. First, let $t$ be a basilica angle with partner $\tilde{t}$. Since leaves in the lamination of the basilica do not cross the minor leaf connecting $\frac{1}{3}$ and $\frac{2}{3}$, together with the fact that opposite quadrants contain rays of the same range of arguments, we conclude that there is one map from $\widehat{\Lambda}_{i}(\tilde{t})$ in the same quadrant of $\phyp$ as $F$, and one in the opposite quadrant. The case for a basilica angle follows similarly to the proof of Theorem \ref{thm: mainC}.    
\end{proof}

We conclude with the final results. Theorem \ref{thm: mainA} fully describes the action of $\widehat{\Psi}$ on $\phyp \subset \Sone$. Taking Theorem \ref{thm: mainA} together with Lemma \ref{lem: mainA}, we establish for the unique type A component $\phyp$ what Theorem \ref{thm: mainC} established for each type C component of $\Sone$, with the exception of the four parabolic maps on this boundary which are landing points of co-period two external parameter rays. These parabolic maps are taken care of in Theorem \ref{thm: mainParabolic}.     

\begin{theorem}
Suppose $t \not\in \{\frac{1}{3},\frac{2}{3}\}$ is a basilica angle with partner $\tilde{t}$, let $F \in \widehat{\Lambda}_{i}(t)$, and write $\widehat{\Lambda}_{i}(\tilde{t}) = \{\tilde{F}_{s}, \tilde{F}_{o}\}$ where $\tilde{F}_{s}$ is in the same quadrant as $F$, and $\tilde{F}_{o}$ is in the opposite  quadrant. Then $\widehat{\Psi}^{-1}(\widehat{\Psi}(F)) = \{F,\tilde{F}_{s}\}$.
\label{thm: mainA}
\end{theorem}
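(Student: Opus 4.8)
The plan is to run the external-argument bookkeeping from the proof of Theorem~\ref{thm: mainC}, now fed by Theorem~\ref{thm: thebigone}(1) and the argument-preserving property of $\widehat{\Psi}$ (Corollary~\ref{lem: psi-land}), the one genuinely new point being to decide which of the two internal rays of argument $\tilde t$ in $\phyp$ carries the parameter identified with $F$. As a preliminary, note that since $t \notin \{\frac{1}{3},\frac{2}{3}\}$ is a basilica angle, $t$ is strictly preperiodic under doubling, so each of $F$, $\tilde F_s$, $\tilde F_o$ is critically finite and non-parabolic, the external parameter rays landing at it are not coperiodic, and Theorem~\ref{thm: thebigone}(1) is available for it. By Lemma~\ref{lem: mainA} we already know $\widehat{\Lambda}_{i}(\tilde t) = \{\tilde F_s, \tilde F_o\}$ with $\tilde F_s$ in $F$'s quadrant and $\tilde F_o$ in the opposite one, and that at least one of them has the same $\widehat{\Psi}$-image as $F$; what remains is to show this partner is $\tilde F_s$ and that $\widehat{\Psi}(F)$ has no further preimage.

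First I would pass to the dynamical plane. Fix an argument $\theta$ of an external parameter ray landing at $F$; then $r_e^F(\theta)$ lands at the cocritical point $2a_F$, which, by the way $\partial\phyp$ is parametrized by internal argument, lies on $\partial\widehat{\A}_{a_F}$ at internal argument $t$. By Theorem~\ref{thm: thebigone}(1), $K(\widehat{\Psi}(F))$ is $K(F)$ with every Fatou component basilica-laminated; inside $\widehat{\A}_{a_F}$ this glues the internal-$t$ point to the internal-$\tilde t$ point and to no other point (the basilica partner of $t$ is unique, Lemma~\ref{lem: ! bas pairs}). Hence the external rays landing at $2a_{\widehat{\Psi}(F)}$ are exactly those whose arguments land, for $F$, either at the internal-$t$ point of $\widehat{\A}_{a_F}$ --- these being precisely the arguments of the parameter rays at $F$ --- or at the internal-$\tilde t$ point of $\widehat{\A}_{a_F}$; write $\Theta$ for the latter set of arguments. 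Since a parameter lies on $R_e(\psi)$ exactly when its $\psi$-dynamical ray lands at its cocritical point, $\widehat{\Psi}(F)$ is the common landing point in $\Stwo$ of precisely the parameter rays whose arguments lie in $(\text{arguments at }F)\cup\Theta$.

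Next I would run the same computation starting from $\tilde F_s$ (legitimate, as $\tilde t \notin \{\frac{1}{3},\frac{2}{3}\}$ is a basilica angle with partner $t$): $\widehat{\Psi}(\tilde F_s)$ is the common landing point of the parameter rays with arguments in $(\text{arguments at }\tilde F_s)\cup\Theta'$, where $\Theta'$ collects the arguments landing at the internal-$t$ point of $\widehat{\A}_{a_{\tilde F_s}}$. The crux is to prove $\Theta = (\text{arguments at }\tilde F_s)$ --- and \emph{not} $(\text{arguments at }\tilde F_o)$ --- and, symmetrically, $\Theta' = (\text{arguments at }F)$. Here I would use that the internal rays of arguments $t$ and $\tilde t$ in $\widehat{\A}_{a_F}$ lie on the same side of the internal rays of arguments $\frac{1}{3}$ and $\frac{2}{3}$, because no leaf of the basilica lamination crosses the minor leaf joining $\frac{1}{3}$ and $\frac{2}{3}$; and that, under the double cover of internal arguments of $\phyp$ whose deck transformation interchanges opposite quadrants, the parameter whose immediate basin carries the internal-$\tilde t$ data compatible with $F$ therefore lies on the same sheet, hence the same quadrant, as $F$, i.e.\ equals $\tilde F_s$. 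Granting this, $\widehat{\Psi}(F)$ and $\widehat{\Psi}(\tilde F_s)$ are each the common landing point of exactly the parameter rays with arguments in $(\text{arguments at }F)\sqcup(\text{arguments at }\tilde F_s)$, so they coincide; and if $\widehat{\Psi}(G) = \widehat{\Psi}(F)$ then, by argument preservation, all parameter-ray arguments at $G$ lie in this set, forcing $G \in \{F, \tilde F_s\}$. In particular $\widehat{\Psi}(\tilde F_o) \neq \widehat{\Psi}(F)$.

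I expect the identification $\Theta = (\text{arguments at }\tilde F_s)$ to be the principal obstacle. It amounts to showing that the set of external dynamical-ray arguments landing at the internal-argument-$\sigma$ point of the immediate basin $\widehat{\A}_a$ depends only on $\sigma$ and on the sheet of $\phyp$ carrying the parameter, and that this dependence interacts with the quadrant structure so that following the basilica pairing never switches sheets. Once the correct combinatorial model for $\partial\widehat{\A}_a$ along $\partial\phyp$ is established, the remainder is the argument-counting of Theorem~\ref{thm: mainC} fed by Theorem~\ref{thm: thebigone}.
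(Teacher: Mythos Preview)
Your instinct that the identification $\Theta = (\text{arguments at }\tilde F_s)$ is the crux is correct, and you are right to flag it as the principal obstacle: the argument you sketch for it does not go through as stated. You are asking whether the set of external dynamical-ray arguments landing at the point of internal argument $\tilde t$ on $\partial\widehat{\A}_{a_F}$ coincides with the set of external parameter-ray arguments landing at $\tilde F_s$, i.e.\ with the dynamical arguments landing at internal argument $\tilde t$ on $\partial\widehat{\A}_{a_{\tilde F_s}}$ for the \emph{different} map $\tilde F_s$. Nothing in the paper establishes that the external-to-internal correspondence on the boundary of the immediate basin is constant along a quadrant of $\partial\phyp$, and the observation that the leaf $\{t,\tilde t\}$ does not cross the minor leaf $\{\tfrac13,\tfrac23\}$ only tells you that $\tilde F_s$ and $\tilde F_o$ lie in the correct pair of quadrants --- it does not by itself pin down which one carries the arguments $\Theta$. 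Filling this in would require a separate analysis of how the combinatorics of $\partial\widehat{\A}_a$ varies along $\partial\phyp$, at least as delicate as the theorem itself.

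The paper sidesteps this difficulty entirely. Since Lemma~\ref{lem: mainA} already restricts the preimage to one of two possibilities, the paper argues by contradiction: assume $\widehat\Psi^{-1}(\widehat\Psi(F)) = \{F, \tilde F_o\}$. If $F$ lies on the boundary of quadrant~I, the external parameter argument $s$ at $F$ lies in $(\tfrac{23}{24},\tfrac{1}{24})$, while the argument $\tilde s$ at $\tilde F_o$ (quadrant~III) lies in $(\tfrac{11}{24},\tfrac{13}{24})$. For $G = \widehat\Psi(F)$, the dynamical rays of arguments $s$ and $\tilde s$ would then both land at $2a_G$. But by the period-two analysis (see Figure~\ref{fig: S2p}), the rays of arguments $\tfrac{2}{8}$ and $\tfrac{6}{8}$ land together at $\alpha_G$; the pairs $\{s,\tilde s\}$ and $\{\tfrac{2}{8},\tfrac{6}{8}\}$ are linked in $\R/\Z$, so the rays would cross, contradicting injectivity of the B\"ottcher coordinate. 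The other quadrants use the appropriate period-two pair landing at $\alpha_G$. This ray-crossing obstruction is much lighter than your direct approach: it requires no comparison of external-to-internal correspondences across distinct parameters on $\partial\phyp$.
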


\begin{proof}
For a contradiction, suppose that $F \in \widehat{\Lambda}_{i}(t)$ where $t \not\in \{\frac{1}{3},\frac{2}{3}\}$ is a basilica angle with partner $\tilde{t}$, and that $\widehat{\Psi}^{-1}(\widehat{\Psi}(F)) = \{F,\tilde{F}_{o}\}$. Note that by Lemma \ref{lem: mainA}, only the two cases $\widehat{\Psi}^{-1}(\widehat{\Psi}(F)) = \{F,\tilde{F}_{s}\}$ or $\widehat{\Psi}^{-1}(\widehat{\Psi}(F)) = \{F,\tilde{F}_{o}\}$ are possible. First, assume that $F$ is on the boundary of quadrant I in $\Sone$. It then follows that the argument $s$ of the external parameter ray landing at $F$ is in $\left(\frac{23}{24},\frac{1}{24}\right)$, and the argument $\tilde{s}$ of the external parameter ray landing at $\tilde{F}_{o}$ is in $\left(\frac{11}{24},\frac{13}{24}\right)$. Now, define $G := \widehat{\Psi}(F) = \widehat{\Psi}(\tilde{F}_{o}) \in \Stwo$. Then we know that $\lambda_{e}(\frac{2}{8}) = \lambda_{e}(\frac{6}{8}) = \alpha_{G} \in K(G)$, see Figure \ref{fig: S2p}. However, we must also have $\lambda_{e}(s) = \lambda_{e}(\tilde{s}) = 2a_{G} \in K(G)$. This is a contradiction, since the crossing of rays would violate the injectivity of the B{\"o}ttcher coordinate. Therefore, we have $\widehat{\Psi}^{-1}(\widehat{\Psi}(F)) = \{F,\tilde{F}_{s}\}$, as desired. The remaining cases follow similarly, with $\lambda_{e}(\frac{5}{8}) = \lambda_{e}(\frac{7}{8}) = \alpha_{G} \in K(G)$ when $F$ is assumed in quadrant II, and $\lambda_{e}(\frac{1}{8}) = \lambda_{e}(\frac{3}{8}) = \alpha_{G} \in K(G)$ when $F$ is assumed in quadrant IV. 
\end{proof}

\begin{figure}[htb!]
    \centering
    \includegraphics[width=2.5in]{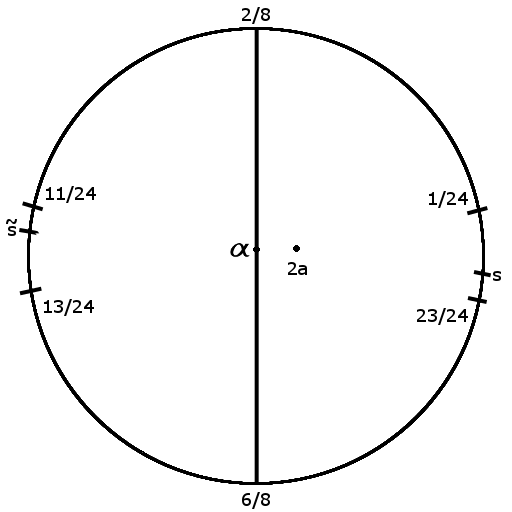}
    \caption{An illustration of the contradiction from the proof of Theorem \ref{thm: mainA} when $F$ is taken in the boundary of quadrant I of $\phyp$.}
    \label{fig: contra}
\end{figure}

We finally need to take into account those basilica angles for which the previous theorem does not apply.

\begin{theorem}
Suppose $t \in \{\frac{1}{3},\frac{2}{3}\}$, and let $F \in \widehat{\Lambda}_{i}(t) \subset \partial\phyp$, then $F$ is a parabolic map for which $\widehat{\Psi}^{-1}(\widehat{\Psi}(F)) = \{F\}$.  
\label{thm: mainParabolic}
\end{theorem}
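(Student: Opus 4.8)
The plan is to recognize that the four maps in the statement are exactly the landing points of the co-periodic parameter rays that meet $\partial\phyp$, and then to derive everything from Lemma~\ref{Lem:coperiod} together with the dichotomy for rational parameter rays.

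First I would collect the input facts. For $t \in \{\frac{1}{3},\frac{2}{3}\}$, the map $F$ is the landing point of one of the four internal rays $R_{i}^{\mathrm{I}}(\frac{1}{3})$, $R_{i}^{\mathrm{I}}(\frac{2}{3})$, $R_{i}^{\mathrm{III}}(\frac{1}{3})$, $R_{i}^{\mathrm{III}}(\frac{2}{3})$ of $\phyp$; as recorded in the discussion preceding the theorem, together with \cite{CP3} and the fact that in $\Sone$ all co-periodic rays land in pairs, such a map is parabolic and is the common landing point of a pair $R_{e}(\theta), R_{e}(\theta')$ of co-period two parameter rays. This gives the first assertion of the theorem, and it also records that $F$ is not the landing point of any rational parameter ray of non-co-periodic angle, since such rays land at critically finite non-hyperbolic parameters rather than parabolic ones.

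Then I would close the argument in one step. Because $\widehat{\Psi}$ is continuous and preserves arguments of parameter rays (Corollary~\ref{lem: psi-land}), $G := \widehat{\Psi}(F)$ is the landing point of both $R_{e}(\theta)$ and $R_{e}(\theta')$ in $\Stwo$, and by Lemma~\ref{Lem:coperiod} these two co-periodic rays really do land together there, so $G$ is a single parabolic parameter — in particular $G$ is not hit by any rational parameter ray of non-co-periodic angle either. Now suppose $x$ lies in the domain of $\widehat{\Psi}$ with $\widehat{\Psi}(x) = G$. Then $x$ is the landing point of some rational parameter ray $R_{e}(\phi)$ in $\Sone$, and $R_{e}(\phi)$ also lands at $G$ in $\Stwo$ because $\widehat{\Psi}$ preserves arguments. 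Since $G$ is parabolic and not hit by any non-co-periodic rational ray, $\phi$ must be co-periodic, so $R_{e}(\phi)$ and $R_{e}(\theta)$ are two co-periodic rays landing together in $\Stwo$; the remaining implication of Lemma~\ref{Lem:coperiod} forces them to land together in $\Sone$ as well, whence $x$ is the landing point of $R_{e}(\theta)$ in $\Sone$, i.e. $x = F$. Thus $\widehat{\Psi}^{-1}(\widehat{\Psi}(F)) = \{F\}$.

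This is a short argument, and the point requiring care is not a hard obstacle but the correct use of the dichotomy (rational co-periodic rays land at parabolic parameters, rational non-co-periodic rays at critically finite non-hyperbolic parameters), which is exactly what forbids a co-periodic and a non-co-periodic rational ray from sharing a landing point and so makes Lemma~\ref{Lem:coperiod} applicable. It is also instructive to see the contrast with Theorem~\ref{thm: mainA}: taking $F$ to be the landing point of $R_{i}^{\mathrm{I}}(\frac{1}{3})$, the two parameter rays at $F$ bound, together with $F$, a period-two Type~D wake of $\Sone$, and $\widehat{\Psi}$ sends it to such a wake of $\Stwo$; the three remaining candidate partners — the landing points of $R_{i}^{\mathrm{I}}(\frac{2}{3})$, $R_{i}^{\mathrm{III}}(\frac{1}{3})$ and $R_{i}^{\mathrm{III}}(\frac{2}{3})$ — are sent to the roots of the three other period-two Type~D wakes of $\Stwo$, which are distinct parabolic parameters, so no collision can occur.
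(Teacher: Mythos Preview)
Your proof is correct and rests on the same facts as the paper's: the four maps in $\widehat{\Lambda}_i(\tfrac13)\cup\widehat{\Lambda}_i(\tfrac23)$ are exactly the landing points of the co-period~two parameter rays in $\Esc_1$, hence parabolic, and $\widehat{\Psi}$ carries them bijectively to the four distinct parabolic landing points of the corresponding co-period~two rays in $\Esc_2^B$. Where the paper simply asserts this bijective correspondence, you spell out the injectivity direction more carefully---using the dichotomy (non-coperiodic rational rays cannot land at parabolic parameters) to force any ray at $G$ to be coperiodic and then invoking the converse in Lemma~\ref{Lem:coperiod}---and your closing paragraph is precisely the paper's argument.
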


\begin{proof}
The four landing points of internal rays of $\phyp$ of arguments $\frac{1}{3}$ and $\frac{2}{3}$ are landing points of coperiodic external rays of co-period two. From \cite{CP3}, we know that such landing points are necessarily parabolic maps. Since $\widehat{\Psi}$ sends the landing point of the parameter ray of argument $t$ in $\Sone$ to the landing point of the parameter ray of argument $t$ in $\Stwo$ by Corollary \ref{lem: psi-land}, it follows that these four distinct parabolic maps correspond bijectively under $\widehat{\Psi}$ with the four distinct parabolic maps in $\Stwo$ on the boundaries of both escape regions which are the landing points of the corresponding coperiodic rays of co-period two.    
\end{proof} 

 We now have all the ingredients we need to prove our main result.
 
 \begin{proof}[Proof of Main Theorem A]
  The case for Type $C$ is taken care of in Theorem~\ref{thm: mainC} and the Type $A$ component is dealt with in Theorems~\ref{thm: mainA} and \ref{thm: mainParabolic}. 
\end{proof}

\bibliographystyle{plain}
\bibliography{idents}

\end{document}